\documentclass[11pt]{amsart}       %%% Format du rapport

\usepackage{amsmath,amssymb,amsthm}
\usepackage{graphicx} %%% pour les images
\usepackage{tikz}
\usepackage{dsfont}
\usepackage{xspace}	
\usepackage{stmaryrd}
\usepackage{mathrsfs}

%\labelformat{ex}{Example~#1} %%% lorsqu'on fait appel à un exemple

%\labelformat{cex}{Counter example ~#1}
\newtheorem{df}{Definition}[section]
%\labelformat{df}{Definition~#1}
\newtheorem{prop}{Proposition}[section]
%\labelformat{prop}{Proposition~#1}

%\labelformat{defprop}{Definition-Proposition~#1}
\newtheorem{theo}{Theorem}[section]
%\labelformat{theo}{Theorem~#1}
\newtheorem{lem}{Lemma}[section]
%\labelformat{lem}{Lemma~#1}

%\labelformat{cor}{Corollary~#1}

%\labelformat{rpl}{Recall~#1}

\newtheorem{rqqq}{Remark}[section]
%\labelformat{rqqq}{Remark~#1}

%\labelformat{quest}{Question~#1}

\newcommand{\N}{\mathbb{N}}
\newcommand{\NN}{\mathbb{N}^*}
\newcommand{\p}{\mathbb{P}}
\newcommand{\E}{\mathbb{E}}
\newcommand{\Q}{\mathbb{Q}}

\newcommand{\cU}{\mathscr{U}}
\newcommand{\cF}{\mathscr{F}}

\newcommand{\bt}{\mathbf{t}}
\newcommand{\T}{\mathbb{T}}
\newcommand{\bp}{\mathbf{p}}
\newcommand{\bq}{\mathbf{q}}
\newcommand{\bn}{\mathbf{n}}

\newcommand{\indic}{\ensuremath{\mathds{1}}\xspace}

\numberwithin{equation}{section}

%%%%%%%%%%%%%%%%%%%%%%%%%%%%%%%% début %%%%%%%%%%%%%%%%%%%%%%
\begin{document}

\title[Penalization of Marked Galton-Watson trees]{Penalization of Galton Watson trees with marked vertices}
\date{\today}
\author{Romain Abraham}
\address{Romain Abraham, Institut Denis Poisson,
Universit\'{e} d'Orl\'{e}ans,
Universit\'e de Tours, CNRS, France}
\email{romain.abraham@univ-orleans.fr}

\author{Sonia Boulal}
\address{Sonia Boulal, Institut Denis Poisson,
Universit\'{e} d'Orl\'{e}ans,
Universit\'e de Tours, CNRS, France}
\email{sonia.boulal@univ-orleans.fr}

\author{Pierre Debs}
\address{Pierre Debs, Institut Denis Poisson,
Universit\'{e} d'Orl\'{e}ans,
Universit\'e de Tours, CNRS, France}
\email{pierre.debs@univ-orleans.fr}

\subjclass[2010]{60J80; 60G42}

\keywords{Marked Galton-Watson tree, martingales, Girsanov transformation, penalization}

\thanks{This work is supported by the ANR project "Rawabranch" number ANR-23-CE40-0008.}

\begin{abstract}
We consider a Galton-Watson tree where each node is mar\-ked independently of each others with a probability depending on its out-degree. Using a penalization method, we exhibit new martingales where the number of marks up to level $n-1$ appears. Then, we use these martingales to define new probability measures via a Girsanov transformation and describe the distribution of the random trees under these new probabilities. 
\end{abstract}

\maketitle
\section{introduction}

We consider in this work  a Galton-Watson process $(Z_n)_{n\in\N}$ (and more precisely its genealogical tree) with offspring distribution $\bp:=(\bp(n))_{n\in\N}$. We suppose throughout this work that $\bp$ admits at least a first moment and that it is non-degenerate:
\begin{equation}\label{condp}
\bp(0)+\bp(1)<1~ \text{and}~\mu(\bp)<+\infty,
\end{equation}
where $\mu(\bp):=\sum_{n\ge0}n\bp(n)$ is the mean of $\bp$. If $\mu(\bp)<1$ (resp. $\mu(\bp)=1$, $\mu(\bp)>1$), we say that the offspring distribution is sub-critical (resp. critical, super-critical). In the sub-critical and critical cases, we have almost surely population extinction.

Over the years, conditioning a critical Galton-Watson tree to be large has been considered: large total progeny (see Kennedy \cite{kennedy_galton-watson_1975} and Geiger and Kaufman \cite{geiger_shape_2004}), large number of leaves (see Curien and Kortchemski \cite{curien_random_2014}) and more generally, large number of nodes whose out-degree belongs to some specific subset of integers (see Abraham and Delmas \cite{abraham_local_2013}). In all these works, we can compute the quantity of interest (that we condition to be large) by marking the nodes that satisfy the studied property and then count the number of nodes. In \cite{abraham_local_2017}, Abraham, Bouaziz and Delmas generalized this approach by marking the nodes randomly where, conditionnally on the tree, the nodes are marked independently of each others with a probability that may depend on their out-degree. More precisely, independently of each other, each individual gives birth to $k$ children and is
\begin{itemize}
\item marked with probability $\bp_0(k,1):=\bp(k)\bq(k)$,\\
\item unmarked with probability $\bp_0(k,0):=\bp(k)(1-\bq(k))$,
\end{itemize}
where $\bq:=(\bq(k))_{k\ge0}$ is a sequence of numbers in $[0,1]$ and is called the mark function. The probability distribution $\bp_0$ on $\N\times\{0,1\}$ is called here the marking-reproduction law of the associated marked Galton-Watson (MGW) tree. We always suppose that the condition 
\begin{equation}\label{condq}
\exists k\in\N,\ \bp(k)\bq(k)>0
\end{equation}
holds so that every individual of a  MGW tree is marked with positive probability. They then condition the tree on having a large number of marked nodes and prove a local convergence of this conditioned marked tree toward Kesten's tree (as for the other conditionnings) as the number of marked nodes tends to infinity.

Another way of getting a tree with an "abnormal" number of marked vertices is to make a change of measure via a Girsanov transformation with a martingale which gives more weight to trees with a large number of marks. In order to find such a martingale, a general method called penalization has been introduced by Roynette, Vallois and Yor \cite{roynette_limiting_2006,roynette_penalisations_2006,roynette_brownian_2009} in the case of the one-dimensional Brownian motion to generate new martingales and to define, by change of measure, Brownian-like processes conditioned on some specific zero-probability events. This idea has been applied to Galton-Watson trees in \cite{abraham_penalization_2020} and this paper is a natural extension of this work.

Let us denote by $M_n$ the number of marks until generation $n-1$, for $n\in\mathbb N^*$ and $M_0=0$, and let $(\cF_n)_{n\ge0}$ be the natural filtration generated by the tree up to generation $n$ and marks up to generation $n-1$. Notice that we need to know the number of offspring of a node to decide whether it is marked or not. That is why, if we look at the tree up to generation $n$, we only have information for the marks up to generation $n-1$, which justifies the definition of the filtration $(\cF_n)_{n\ge 0}$.
The aim of this article is the study, for some measurable positive functions $\phi(x,y)$, of the limit
\begin{equation}\label{limpengen}
	\lim_{p\to +\infty}\dfrac{\E\left[\indic_{\Lambda_n}\phi\left(Z_{n+p},M_{n+p}\right)\right]}{\E\left[\phi\left(Z_{n+p},M_{n+p}\right)\right]}
\end{equation}
where $\Lambda_n\in\mathscr F_n$. If this limit exists,  
it has the form $\E[\indic_{\Lambda_n}B_{n}]$ where $\left(B_n\right)_{n\in\N}$ is a positive $\cF_n$-martingale such that $B_0=1$ (see Section 1.2 of \cite{roynette_penalising_2009}).
This enables to define a probability measure $\Q$ by: 
\begin{equation*}
	\forall \Lambda_n\in\cF_n,~\Q\left(\Lambda_n\right)=\E_{\bp_0}[\indic_{\Lambda_n}B_{n}], 
\end{equation*}
where $\E_{\bp_0}$ denotes the expectation with respect to the probability distribution $\p_{\bp_0}$ of a marked Galton-Watson tree with marking reproduction law $\bp_0$  
and we study the distribution of the tree under $\Q$. In what follows, we omit the reference to $\bp_0$ and write simply $\p$ and $\E$.

We begin with the function $\phi\left(Z_p,M_p\right)=M_p^\ell$, for $\ell\geq1$, which indeed gives weight to trees with a large number of marked nodes. Remark that Assumption \eqref{condq} implies that $\E[M_{n+p}^\ell]>0$ for $n+p>0$. We then obtain the following limits:

\begin{theo}\label{thm:limit_polynome}
Let $\bp$ be an offspring distribution satisfying \eqref{condp} and that admits a moment of order $\ell\in\NN$, and let $\bq$ be a mark function satisfying \eqref{condq}. Then, for every $n\in\N$ and every $\Lambda_n\in\cF_n$, we have
\[
\lim_{p\to+\infty}\frac{\E[\indic_{\Lambda_n}M_{n+p}^\ell]}{\E[M_{n+p}^\ell]}=
\begin{cases}
\E[\indic_{\Lambda_n} f_\ell(M_n,Z_n)] & \text{if }\mu<1,\\
\E[\indic_{\Lambda_n}Z_n] & \text{if }\mu=1,\\
\E\left[\indic_{\Lambda_n} \frac{P_\ell(Z_n)}{\mu^{\ell n}}\right] & \text{if }\mu>1,
\end{cases}
\]
where $P_\ell$ is an explicit polynomial function of degree $\ell$ and $f_\ell$ is defined for all $s,z\in\N$ by:
\begin{equation}\label{expf_j}
	f_\ell(s,z):=\dfrac{1}{\xi_\ell}\sum_{i=0}^{\ell}\binom{\ell}{i}s^{\ell-i}\sum_{t_1+\ldots+t_z=i}\binom{i}{t_1\dots t_z}\prod_{j=1}^z\xi_{t_j},
\end{equation}
where $\binom{j}{t_1\cdots t_{z}}$ is the multinomial coefficient and $\xi_j:=\lim_{p\rightarrow+\infty}\E[M_p^j]$ for all $j\ge0$.
\end{theo}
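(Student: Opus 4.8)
The plan is to condition on $\cF_n$ and use the branching property. The $Z_n$ subtrees hanging at generation $n$ are i.i.d.\ copies of a marked Galton--Watson tree and are independent of $\cF_n$, so I would first record
\[
M_{n+p}=M_n+\sum_{i=1}^{Z_n}\tilde M_p^{(i)}\qquad\text{conditionally on }\cF_n,
\]
where the $\tilde M_p^{(i)}$ are i.i.d.\ copies of $M_p$. Expanding the $\ell$-th power with the binomial and then the multinomial formula and taking the conditional expectation gives
\[
\E[M_{n+p}^\ell\mid\cF_n]=\sum_{i=0}^{\ell}\binom{\ell}{i}M_n^{\ell-i}\sum_{t_1+\cdots+t_{Z_n}=i}\binom{i}{t_1\cdots t_{Z_n}}\prod_{j=1}^{Z_n}\E[M_p^{t_j}].
\]
Since $\Lambda_n\in\cF_n$, the ratio in \eqref{limpengen} equals $\E\big[\indic_{\Lambda_n}R_p\big]$ with $R_p:=\E[M_{n+p}^\ell\mid\cF_n]/\E[M_{n+p}^\ell]$ (well defined and positive by \eqref{condq}), so the whole problem reduces to the $p\to+\infty$ behaviour of the moments $\gamma_p^{(j)}:=\E[M_p^j]$.

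The three regimes are distinguished exactly by the growth of $\gamma_p^{(j)}$. In the subcritical case the tree is a.s.\ finite and $M_{n+p}\uparrow M_\infty$, hence $M_{n+p}^\ell\uparrow M_\infty^\ell$; monotone convergence then handles numerator and denominator simultaneously, with $\gamma_p^{(j)}\uparrow\xi_j=\E[M_\infty^j]$, and the conditional expectation above converges to $\E[M_\infty^\ell\mid\cF_n]=\xi_\ell\,f_\ell(M_n,Z_n)$, which yields exactly \eqref{expf_j}. The only point to verify is the finiteness $\xi_\ell<+\infty$, which I would obtain from a recursion for the moments of $M_p$ coming from the root decomposition $M_{p+1}=\indic_{\{\text{root marked}\}}+\sum_{i=1}^{Z_1}\tilde M_p^{(i)}$, together with the assumed moment of order $\ell$ on $\bp$.

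In the critical and supercritical cases $\gamma_p^{(\ell)}\to+\infty$, so I would prove an almost sure limit for $R_p$ and then upgrade it. In the critical case $\gamma_p^{(j)}\sim c_j\,p^{2j-1}$, an exponent that is strictly superadditive (splitting $\ell$ into $m\ge2$ nonzero parts gives exponent $2\ell-m\le 2\ell-2<2\ell-1$); hence in the double sum the unique dominant contribution is $i=\ell$ with a single part, namely $Z_n\,\gamma_p^{(\ell)}$, and dividing by $\gamma_{n+p}^{(\ell)}\sim\gamma_p^{(\ell)}$ gives $R_p\to Z_n$ a.s. In the supercritical case $\gamma_p^{(j)}\sim c_j\,\mu^{jp}$, so every partition of $\ell$ contributes at the same order $\mu^{\ell p}$; collecting the multinomial coefficients at level $i=\ell$ produces a polynomial $\tilde P_\ell(Z_n)$ of degree $\ell$ (its top term $Z_n(Z_n-1)\cdots(Z_n-\ell+1)c_1^\ell$ comes from the all-ones partition), and dividing by $\gamma_{n+p}^{(\ell)}\sim c_\ell\mu^{\ell(n+p)}$ gives $R_p\to P_\ell(Z_n)/\mu^{\ell n}$ a.s.\ with $P_\ell=\tilde P_\ell/c_\ell$. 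Since $\E[R_p]=1$ for every $p$, once the a.s.\ limit is checked to have expectation one (immediate when $\mu=1$, since $\E[Z_n]=\mu^n=1$), Scheffé's lemma converts the a.s.\ convergence into $L^1$ convergence, which gives $\E[\indic_{\Lambda_n}R_p]\to\E[\indic_{\Lambda_n}R_\infty]$ for every $\Lambda_n\in\cF_n$.

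The main obstacle is the asymptotic analysis of $\gamma_p^{(j)}=\E[M_p^j]$ itself: establishing the polynomial rate $p^{2j-1}$ with strict superadditivity in the critical case, and the existence of a strictly positive constant $c_\ell=\lim_p\gamma_p^{(\ell)}/\mu^{\ell p}$ in the supercritical case (which amounts to $L^\ell$-convergence of the normalized additive functional built from $M_p$, and is precisely where the moment assumption on $\bp$ is needed). I expect both to follow by induction on $j$ from the moment recursion, with the lower-order moments feeding the leading asymptotics of the next one, and the normalization $\E[R_\infty]=1$ to fall out of the same computation.
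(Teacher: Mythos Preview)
Your outline matches the paper's proof closely: the conditional expansion
\[
\E[M_{n+p}^\ell\mid\cF_n]=\sum_{j=0}^\ell\binom{\ell}{j}M_n^{\ell-j}\sum_{t_1+\cdots+t_{Z_n}=j}\binom{j}{t_1\cdots t_{Z_n}}\prod_{i=1}^{Z_n}\E[M_p^{t_i}]
\]
is exactly formula \eqref{expM_p^k}, the subcritical case is handled by monotone convergence (the paper's Beppo Levi), and the asymptotics $\E[M_p^j]\sim c_j\mu^{jp}$ for $\mu>1$ and $\E[M_p^j]\sim c_j p^{2j-1}$ for $\mu=1$ are established in the paper precisely by the inductive moment recursion you describe (Propositions \ref{expM_n^k} and \ref{expM_n^kmu1}). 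Your identification of the dominant contributions is also the paper's.

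The one genuine difference is the final step. The paper does \emph{not} use Scheff\'e: it bounds $R_p$ uniformly in $p$ by $C\bigl(\sum_{i\le n}Z_i\bigr)^\ell$, which is integrable by the moment assumption on $\bp$ (Lemma \ref{propmomordk}), and applies dominated convergence. Your Scheff\'e route is clean when $\mu\le1$, but for $\mu>1$ it requires the identity $\E[P_\ell(Z_n)]=\mu^{\ell n}$ \emph{a priori}, and this does not simply ``fall out of the same computation'': reading it off the recursion $\gamma_{p+1}^{(\ell)}=\E[\,\cdots\,]$ needs exactly an exchange of limit and expectation, i.e.\ the domination you are trying to avoid. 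One can instead verify directly that $P_\ell(Z_n)/\mu^{\ell n}$ is a martingale from the explicit form of $P_\ell$, but that is a separate combinatorial check you have not supplied. The paper's dominated-convergence argument sidesteps this circularity and is the more economical choice here.
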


The two martingales obtained for $\mu \ge 1$ are already known and the study under $\Q$ is, as a result, unnecessary (for $\mu=1$, we obtain Kesten's tree and see \cite{abraham_penalization_2020} for $\mu>1$).

Here, the more interesting case is the sub-critical one where the martingale depends of $M_n$. For instance, let us point out that, for $\ell=1$, we get the martingale
\[
f_1(M_n,Z_n)=Z_n+\frac{1}{\xi_1}M_n.
\]
To describe the genealogical tree of $(Z,M)$ under the new probability, we need to introduce the {\it mass} of a node $u$ denoted by $m_u$: this quantity is zero for the root and is transmitted throughout the tree according to certain rules (see Subsection \ref{masses}) and permits us to exhibit the new reproduction law and marking procedure under $\Q$.

More precisely, let us define a random multi-type tree as follows. The types of the nodes run from $0$ to $\ell$ and, at each generation, the sum of the types must be equal to $\ell$. As a result the type of the root is $\ell$. If there is $z$ individuals at generation $n$, they have the respective types $(t_1,\dots,t_z)$ such that $t_1+\dots+t_z=\ell$ with a probability depending on $z$, $M_n$ and the mass of each individual of generation $n$ (see \eqref{tauxchoixuplettype} for the explicit formula).
Moreover, if $u$ is an individual of type $i\in\llbracket1,\ell\rrbracket$ with mass $m_u$, he has $k\in\N$ children and has mark $\eta\in\{0,1\}$ with probability:
\[
\bp_i(k,\eta):=\frac{\bp_0(k,\eta)f_i(m_u+\eta,k)}{f_i(m_u,1)}
\]
	where $\bp_0$ is the reproduction-marking law of the initial tree but also the reproduction-mark law of a 0-type node.
	 We call a weighted tree of type $\ell$ a tree with these probabilities of reproduction and marking.% and denoted it by $\tau_{w,k}(\bp,\bq)$.
	 
\begin{theo}
Let $\bp$ be a {\emph sub-critical} offspring distribution satisfying \eqref{condp} and that admits a moment of order $\ell\in\NN$, and let $\bq$ be a mark function satisfying \eqref{condq}. Let $\Q_\ell$ be the probability measure defined by
\[
\forall n\in\N,\ \frac{d\Q_\ell}{d\p}_{|\cF_n}=f_\ell(M_n,Z_n).
\]
Then $\Q_\ell$ is the distribution of a weighted tree of type $\ell$.
\end{theo}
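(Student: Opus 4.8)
The plan is to show that the Radon--Nikodym density $f_\ell(M_n,Z_n)$ factorizes along the genealogical tree into a product of local weights attached to the nodes of generations $0$ through $n-1$, and that this product is exactly the likelihood ratio of a weighted tree of type $\ell$ against the original marked Galton--Watson tree $\p$. Concretely, I would first establish a branching-type decomposition of $f_\ell$ itself: using the explicit formula \eqref{expf_j}, the quantity $f_\ell(s,z)$ is built from the $\xi_j$'s via a sum over the ways of distributing a total ``type mass'' $i$ among the $z$ children, weighted by $\binom{\ell}{i}s^{\ell-i}\prod_j\xi_{t_j}$. The key combinatorial observation is that this is precisely the structure one gets by assigning to each of the $z$ individuals at generation $n$ a type $t_j\in\llbracket0,\ell\rrbracket$ with $\sum_j t_j=\ell$, and that the factor $f_i(m_u,1)$ appearing in the denominator of $\bp_i(k,\eta)$ is the natural normalization. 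So the first step is to rewrite $f_\ell(s,z)$ in a form that exhibits the type-assignment probabilities \eqref{tauxchoixuplettype} and the per-node factors $f_i(m_u,1)$ explicitly.

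Next I would set up the multiplicative structure of the density across one generation. The standard strategy for identifying a tree distribution via a martingale change of measure is to verify the consistency/branching property: for a node $u$ of type $i$ with mass $m_u$, the reproduction-marking law $\bp_i(k,\eta)=\bp_0(k,\eta)f_i(m_u+\eta,k)/f_i(m_u,1)$ is a genuine probability measure (this requires $\sum_{k,\eta}\bp_0(k,\eta)f_i(m_u+\eta,k)=f_i(m_u,1)$, which is itself a recursion on $f_i$ that I expect to prove using \eqref{expf_j} together with the definition $\xi_j=\lim_p\E[M_p^j]$ and the one-step recursion those limits satisfy). Granting this, I would compute, for a fixed finite marked tree $\mathbf{t}$ truncated at generation $n$, the probability assigned by the weighted tree of type $\ell$. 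This probability is a product over nodes $u$ of generations $0,\dots,n-1$ of the factors $\bp_{i(u)}(k_u,\eta_u)$ together with the type-selection probabilities at each generation. The telescoping of the $f_i(m_u+\eta,k)/f_i(m_u,1)$ factors along each ancestral line, combined with the definition of the mass $m_u$ from Subsection~\ref{masses} (which records the type contribution inherited from the parent), should collapse the whole product down to $f_\ell(M_n,Z_n)/f_\ell(M_0,Z_0)=f_\ell(M_n,Z_n)$, since $M_0=0,Z_0=1$ and the root has type $\ell$.

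The argument then concludes as follows. Let $\p^{(\ell)}$ denote the law of the weighted tree of type $\ell$. By the telescoping computation above, for every cylinder event $\Lambda_n\in\cF_n$ one has
\[
\p^{(\ell)}(\Lambda_n)=\E_{\p}\bigl[\indic_{\Lambda_n}f_\ell(M_n,Z_n)\bigr]=\Q_\ell(\Lambda_n),
\]
where the middle equality is exactly the definition of $\Q_\ell$ and the left equality is the summation over all marked trees compatible with $\Lambda_n$, with the type and mass variables summed out. Since this holds for all $n$ and all $\Lambda_n\in\cF_n$, and the $(\cF_n)$ generate the relevant $\sigma$-algebra, Carath\'eodory's extension (or Kolmogorov consistency) gives $\p^{(\ell)}=\Q_\ell$, which is the assertion.

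The main obstacle I anticipate is the bookkeeping in the telescoping step, specifically matching the mass-transmission rules of Subsection~\ref{masses} to the combinatorial decomposition of \eqref{expf_j}. The masses $m_u$ are introduced precisely so that the inherited type information needed to evaluate $f_i(m_u,\cdot)$ at each node is available, and the delicate point is checking that summing $\bp_i(k,\eta)f_j(\cdots)$ over the children's types reproduces the parent's factor $f_i(m_u,1)$ in a way that is compatible across all of generation $n$ simultaneously (not just one lineage at a time), so that the type-selection probabilities \eqref{tauxchoixuplettype} emerge with the correct normalization rather than an extra spurious factor. Verifying this recursion for $f_\ell$ from the closed form \eqref{expf_j} --- essentially a Vandermonde/multinomial identity dressed up with the moment limits $\xi_j$ --- is where the real work lies, and I would isolate it as a preliminary lemma before assembling the tree-level identity.
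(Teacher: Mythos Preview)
Your plan matches the paper's proof: both establish $\Q_\ell=\p^{(\ell)}$ by showing the finite-dimensional marginals agree, and both reduce this (via induction on $n$, which is what your telescoping amounts to) to a decomposition identity expressing $f_\ell(M_{n},Z_{n})$ and $f_\ell(M_{n+1},Z_{n+1})$ in terms of the per-node factors $f_{t_u}(m_u,1)$ and $f_{t_u}(m_u+\eta_u,k_u)$ summed over type assignments at generation $n$ --- this is exactly the content of the paper's Lemmas~\ref{relf_k} and~\ref{relf_k_n+1}, and your anticipated ``preliminary lemma'' is the same statement. The only noteworthy difference is how that identity is obtained: you propose to derive it combinatorially from the closed form~\eqref{expf_j}, whereas the paper proves it by computing $\lim_{p\to\infty}\E[M_{n+p}^\ell\mid\cF_n]$ (resp.\ $\mid\cF_{n+1}$) in two different ways --- once via the branching decomposition $M_{n+p}\overset{(d)}{=}M_n+\sum_{i=1}^{Z_n}\tilde M_{i,p}$ and once via the mass decomposition $M_{n+p}=\sum_{u\in z_n}(m_u+M_{u,p})$ --- and letting $p\to\infty$ by monotone convergence. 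The probabilistic route is short and makes the role of the masses (and of the identity $\sum_{u\in z_n}m_u=M_n$) transparent; your algebraic route also works, and becomes essentially the same computation once one recognizes $f_t(m,1)\xi_t=\E[(m+M_\infty)^t]$.
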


In a second step, we study the function $\phi\left(Z_p,M_p\right)=H_\ell(Z_p)s^{M_p}t^{Z_p}$, with $s,t\in(0,1)$ where $H_\ell$ is the $\ell$-th Hilbert polynomial defined by:
\begin{equation}\label{Hilbertpol}
	H_0(x)=1 \text{ and } H_\ell(x)=\dfrac{1}{\ell!}\prod_{j=0}^{\ell-1}\left(x-j\right),~\text{for }\ell\geq1.
\end{equation}
Contrary to the previous study, this function gives weight to trees with a small number of marks. We obtain the following limits:

\begin{theo}\label{thm:expo}
Let $\bp$ be an offspring distribution satisfying \eqref{condp} and that admits a moment of order $\ell\in\NN$, and let $\bq$ be a mark function satisfying \eqref{condq}. We define
\[
r=\min\{k\in\N,\bp(k)>0\}.
\] 
Then, for every $s\in(0,1)$, if $\kappa(s)$ denotes the only root in $[0,1]$ of the function
\[
f_s(t):=\E[s^{M_1}t^{Z_1}],
\]
we have for every $n\in\N$ and $\Lambda_n\in\cF_n$,
\begin{itemize}
\item if $r=0$, for every $t\in[0,1]$,
\[
\lim_{p\to+\infty}\frac{\E[\indic_{\Lambda_n}H_\ell(Z_{n+p})s^{M_{n+p}}t^{Z_{n+p}}]}{\E[H_\ell(Z_{n+p})s^{M_{n+p}}t^{Z_{n+p}}]}=
\begin{cases}
\E\left[\indic_{\Lambda_n}s^{M_n}\kappa(s)^{Z_n-1}\right] & \text{ if }\ell=0,\\ 
E\left[\indic_{\Lambda_n}\dfrac{s^{M_n}Z_n \kappa(s)^{Z_n-1}}{f_s'(\kappa(s))^n}\right]& \text{ if }\ell>0,
\end{cases}
\]
\item if $r=1$, for every $t\in (0,1]$,
\[
\lim_{p\to+\infty}\frac{\E[\indic_{\Lambda_n}H_\ell(Z_{n+p})s^{M_{n+p}}t^{Z_{n+p}}]}{\E[H_\ell(Z_{n+p})s^{M_{n+p}}t^{Z_{n+p}}]}=\E\left[\indic_{\Lambda_n}s^{M_n}\alpha_1(s)^{-n}\indic_{Z_n=1}\right],
\]
\item if $r\ge 2$, for every $t\in (0,1]$,
\[
\lim_{p\to+\infty}\frac{\E[\indic_{\Lambda_n}H_\ell(Z_{n+p})s^{M_{n+p}}t^{Z_{n+p}}]}{\E[H_\ell(Z_{n+p})s^{M_{n+p}}t^{Z_{n+p}}]}=\E\left[\indic_{\Lambda_n}s^{M_n}\alpha_r(s)^{\frac{-\left(r^n-1\right)}{r-1}}\indic_{Z_n=r^{n}}\right],
\]
\end{itemize}
	where $\alpha_r(s)=\bp(r)(s\bq(r)+1-\bq(r))$.
\end{theo}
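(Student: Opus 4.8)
The plan is to reduce the whole computation to the iteration of the single generating function $f_s$, and then, in each of the three regimes, to read off the asymptotics of its iterates near the relevant fixed point.

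First I would set up the basic branching identity. Writing $\psi_p(s,t):=\E\left[s^{M_p}t^{Z_p}\right]$ for the joint generating function of one MGW tree and decomposing the tree at generation $n$ into the $Z_n$ subtrees issued from the individuals of that generation (which, conditionally on $\cF_n$, are independent copies of the whole MGW tree with their marks shifted by $n$ generations), the branching property gives, for every $\Lambda_n\in\cF_n$,
\[
\E\left[\indic_{\Lambda_n}s^{M_{n+p}}t^{Z_{n+p}}\right]=\E\left[\indic_{\Lambda_n}s^{M_n}\psi_p(s,t)^{Z_n}\right].
\]
Applying the same decomposition at the root yields the recursion $\psi_p(s,t)=f_s\bigl(\psi_{p-1}(s,t)\bigr)$ with $\psi_0(s,t)=t$, so that $\psi_p(s,\cdot)=f_s^{\circ p}$ is the $p$-th iterate of $f_s$. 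The entire statement thus reduces to the behaviour of $f_s^{\circ p}(t)$ as $p\to+\infty$.

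Second I would analyse $f_s(t)=\sum_{k\ge0}\bp(k)\bigl(\bq(k)s+1-\bq(k)\bigr)t^k$ on $[0,1]$. It is nondecreasing and convex with $f_s(1)=\E[s^{M_1}]<1$ (strictly, by \eqref{condq}), and the three cases reflect the behaviour at its smallest fixed point. When $r=0$ one has $f_s(0)>0$, so by convexity there is a unique fixed point $\kappa(s)\in(0,1)$, it is attracting with $f_s'(\kappa(s))\in(0,1)$, and $f_s^{\circ p}(t)\to\kappa(s)$ for every $t\in[0,1]$. When $r=1$ the point $0$ is fixed with multiplier $f_s'(0)=\alpha_1(s)<1$, whence $f_s^{\circ p}(t)\sim c(t)\,\alpha_1(s)^p$. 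When $r\ge2$ the point $0$ is superattracting with $f_s(t)\sim\alpha_r(s)t^r$ near $0$, producing a double-exponential decay of $f_s^{\circ p}(t)$ governed by $\alpha_r(s)$ and $r$.

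Third I would bring in the Hilbert polynomial through $H_\ell(z)t^z=\frac{t^\ell}{\ell!}\partial_t^\ell t^z$ (valid since $H_\ell(z)=\binom{z}{\ell}$); differentiating under the expectation, which the moment assumption of order $\ell$ legitimises up to $t=1$, gives
\[
\E\left[\indic_{\Lambda_n}H_\ell(Z_{n+p})s^{M_{n+p}}t^{Z_{n+p}}\right]=\frac{t^\ell}{\ell!}\,\partial_t^\ell\,\E\left[\indic_{\Lambda_n}s^{M_n}\psi_p(s,t)^{Z_n}\right].
\]
By Fa\`a di Bruno, $\partial_t^\ell\psi_p^{Z_n}=\sum_{k\ge1}(Z_n)_k\,\psi_p^{Z_n-k}\,B_{\ell,k}\bigl(\partial_t\psi_p,\dots,\partial_t^{\ell-k+1}\psi_p\bigr)$, where $(Z_n)_k$ is the falling factorial and the Bell polynomials $B_{\ell,k}$ are deterministic in $Z_n$. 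I would then isolate the slowest-decaying term and divide numerator by denominator. For $r=0$ and $\ell=0$ the ratio tends to $\E[\indic_{\Lambda_n}s^{M_n}\kappa(s)^{Z_n}]/\E[s^{M_n}\kappa(s)^{Z_n}]$, and since $\E[s^{M_n}\kappa(s)^{Z_n}]=\psi_n(s,\kappa(s))=\kappa(s)$ this equals $\E[\indic_{\Lambda_n}s^{M_n}\kappa(s)^{Z_n-1}]$; for $r=0$ and $\ell\ge1$ the dominant term is $k=1$, namely $Z_n\psi_p^{Z_n-1}\partial_t^\ell\psi_p$, whose deterministic factor $\partial_t^\ell\psi_p$ cancels, leaving $\E[\indic_{\Lambda_n}s^{M_n}Z_n\kappa(s)^{Z_n-1}]/\E[s^{M_n}Z_n\kappa(s)^{Z_n-1}]$ with $\E[s^{M_n}Z_n\kappa(s)^{Z_n-1}]=\partial_t\psi_n(s,\kappa(s))=f_s'(\kappa(s))^n$. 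For $r=1$ (resp.\ $r\ge2$) the decay of $\psi_p^{Z_n}$ concentrates all the mass on $\{Z_n=1\}$ (resp.\ $\{Z_n=r^n\}$, the minimal-degree event, using that $Z_n\ge r^n$ when $r\ge 2$), on which the derivative factor is deterministic and cancels; the normalisations are $\E[s^{M_n}\indic_{Z_n=1}]=\alpha_1(s)^n$ and $\E[s^{M_n}\indic_{Z_n=r^n}]=\alpha_r(s)^{(r^n-1)/(r-1)}$, computed directly from the unique minimal-degree tree realising each event.

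The main obstacle is the asymptotic analysis of the iterates: I must establish the precise rate of $\partial_t^a f_s^{\circ p}$ near the fixed point in each regime and show that a single deterministic factor dominates $\partial_t^\ell\psi_p^{Z_n}$, so that it cancels in the quotient. This is most delicate in the superattracting case $r\ge2$, where the double-exponential decay forces a uniform control of $\partial_t^\ell\psi_p^{Z_n}$ over all admissible $Z_n\ge r^n$ in order to justify, by dominated convergence, both the interchange of limit and expectation and the discarding of the subdominant events $\{Z_n>r^n\}$ (and likewise $\{Z_n>1\}$ for $r=1$).
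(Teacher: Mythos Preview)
Your proposal is correct and follows essentially the same route as the paper: the branching identity reducing everything to iterates $f_s^{\circ p}$, the differentiation trick $H_\ell(z)t^z=\tfrac{t^\ell}{\ell!}\partial_t^\ell t^z$ combined with Fa\`a di Bruno, and then a case-by-case asymptotic analysis of $(f_s^{\circ p})^{(a)}(t)$ near the fixed point (attracting for $r=0$, linearly attracting at $0$ for $r=1$, superattracting for $r\ge2$), with dominated convergence to pass to the limit. The paper packages the asymptotics into two technical lemmas (one for $r=0$ showing $(f_s^p)^{(\ell)}(t)\sim f_s'(\kappa(s))^p C_\ell(t)$, and one for $r\ge2$ giving the double-exponential rate $K_{s,\ell}(t)\,r^{p\ell}e^{r^p b(t)}$), which is exactly the ``main obstacle'' you flag; your normalisation computations $\E[s^{M_n}\kappa(s)^{Z_n}]=\kappa(s)$, $\E[s^{M_n}Z_n\kappa(s)^{Z_n-1}]=f_s'(\kappa(s))^n$, $\E[s^{M_n}\indic_{Z_n=r^n}]=\alpha_r(s)^{(r^n-1)/(r-1)}$ match the paper's.
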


Let us add that, for sake of completeness, we also study the case $s=0$ in Theorem \ref{pen0MntZnp0}.

We also describe the probability distribution obtained by a change of measure using the martingales obtained in the previous theorem, see Theorems \ref{pensMntZn1}, \ref{probunderQ_(s,k)rn}, \ref{thm:cond_expo_r} and \ref{pen0MntZnp0}.

\medskip
The paper is organized as follows: in Section \ref{techback} we introduce the set of discrete marked trees and define the MGW trees. We study in Section \ref{sec:polynomial} the penalization by $M_p^\ell$, finding the martingale obtained at the limit and then describing the distribution obtained by a Girsanov transformation using this martingale. We make the same study in Section \ref{sec:expo} when the penalization is of the form $H_\ell(Z_p)s^{M_p}t^{Z_p}$.

\section{Technical background}\label{techback}

\subsection{The set of marked trees}\label{settree}
Let $\N$ be the set of nonnegative integers and $\NN$ the set of positive integers.
We recall Neveu's formalism \cite{neveu_arbres_1986}  for ordered rooted trees. We set 
$\cU=\bigcup_{n\geq 0}(\NN)^n$
the set of finite sequences of positive integers with the convention $(\NN)^0=\{\emptyset\}$. For $n\geq 0$ and $u=(u_1,...,u_n)\in\cU$, let $|u|=n$ be the length of $u$ with the convention $|\emptyset|=0$. If $u$ and $v$ are two sequences of $\cU$, we denote by $uv$ their concatenation,
 with the convention that $uv=u$ if $v=\emptyset$ and $uv=v$ if $u=\emptyset$.

\medskip
A tree $\bt$ is a subset of $\cU$ that satisfies:
\begin{itemize}
	\item $\emptyset \in t$;
	\item $\forall u\in\cU$, $\forall j\in\NN$, $uj\in\bt\Longrightarrow u\in\bt$;
	\item $\forall u\in \bt$, $\exists k_u(\bt)\in\N$, $\forall i\in\NN$, $ui\in\bt\iff i\le k_u(\bt)$.
\end{itemize}
The integer $k_u(\bt)$ represents the number of offspring of the vertex $u\in \bt$.  
%By convention, we shall set $k_u(t)=-1$ if $u\notin t$. The vertex $\emptyset$ is called the root of $t$.
We denote by $\T$ the set of trees.

\medskip
A marked tree $\bt^*$ is defined by a tree $\bt\in \T$ and a mark $\eta_u\in\{0,1\}$ for every node $u\in\bt$, that is
\[
\bt^*=\bigl(\bt,(\eta_u)_{u\in\bt}\bigr).
\]
A node $u\in\bt$ is said to be marked if $\eta_u=1$ and unmarked if $\eta_u=0$.
We denote by $\T^*$ the set of marked trees.

\medskip
For every $h\in \N$, we define the restriction functions
\[
r_h:\T\longrightarrow\T,\qquad\text{and}\qquad r_h^*:\T^*\longrightarrow\T^*
\]
by, for $\bt\in\T$, $\bt^*=(\bt, (\eta_u,u\in\bt))\in\T^*$,
\begin{align*}
r_h(\bt)=\{u\in\bt,\ |u|\le h\},\qquad
r_h^*(\bt^*)=\bigl(r_h(\bt), (\eta_u^h)_{u\in r_h(\bt)}\bigr)
\end{align*}
with 
\[
\eta_u^h=\begin{cases}
\eta_u & \text{if } |u|\le h-1,\\
0 & \text{if }|u|=h.
\end{cases}
\]	

\begin{rqqq}
Let us stress that the nodes at level $h$ of $r_h^*(\bt^*)$ are unmarked so that the tree $r_h^*(\bt^*)$ is completely characterized by $((k_u(\bt),\eta_u),u\in\bt,|u|\le h-1)$.
\end{rqqq}

We can endow the set $\T^*$ with the $\sigma$-field $\cF$ generated by the family of sets $(\{\bt^*\in\T^*,\ u\in\bt\})_{u\in\cU}$ and hence define probability measures on $(\T^*,\cF)$.

We also endow $\T^*$ with the filtration $(\cF_n)_{n\ge 0}$ where $\cF_n$ is the $\sigma$-field generated by the restiction function $r_n^*$. Notice that $\cF=\bigvee_{n\ge 0} \cF_n$ as, for every $u\in\cU$,
\[
\{\bt^*\in\T^*,\ u\in\bt\}\in\cF_{|u|}.
\]

\medskip
Let $\bt^*=(\bt,(\eta_u)_{u\in\bt})\in\T^*$ be a marked tree. For every $n\in\N$, we define
\begin{itemize}
\item $z_n(\bt^*)$ the nodes of $\bt$ at generation $n$:
\[
z_n(\bt^*)=\{u\in\bt,\ |u|=n\},
\]
\item $Z_n(\bt^*)$ the size of generation $n$:
\[
Z_n(\bt^*)=\mathrm{Card}\bigl(z_n(\bt^*)\bigr),
\]
\item $M_n(\bt^*)$ the total number of marked nodes up to generation $n-1$:
\[
M_0(\bt^*)=0 \quad \text{ and }\quad M_n(\bt^*)=\sum_{u\in\bt,|u|\le n-1}\eta_u \text{ for }n\ge 1.
\]
\end{itemize}
Let us point out that the functions $z_n,\ Z_n,\ M_n$ are all $\cF_{n}$-measurable.

\subsection{Masses}\label{masses}
Let $\bt^*=(\bt,(\eta_u)_{u\in\bt})\in\T^*$ be a marked tree. We then define for every node $u\in\bt$ a new quantity called the mass of the node $u$ and denoted by $m_u$. We set $m_\emptyset=0$ and the masses are then defined recursively as follows. Let $n\in\N$ and let us suppose that the masses $(m_u,|u|\le n)$ are constructed. Let $v\in z_{n+1}(\bt^*)$ that we write $v=uj$ with $u\in\bt$ and $1\le j\le k_u(\bt)$. Then, we set
\begin{equation}\label{valmass}
	m_{uj}=\frac{m_u+\eta_u}{k_u(\bt)}+\frac{ \sum_{w\in z_n(\bt^*),k_w(\bt)=0}\left(m_w+\eta_w\right) }{Z_{n+1}(\bt)}\cdot
\end{equation}
This formula can be interpreted as follows:
\begin{itemize}
	\item The first term means that $u$ transmits its mass, plus one if it is marked, uniformly to all its children;
	\item The second term means that each childless node at generation $n$ transmits its mass, plus one if it is marked, uniformly to all nodes of generation $n+1$.
	\end{itemize}
	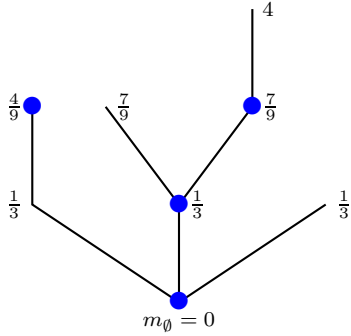
\begin{figure}[ht!]
		\centering
		\begin{tikzpicture}[scale=1.3]
			\coordinate (0) at (0,0);
			\coordinate (1) at (-1.5,1);
			\coordinate (2) at (0,1);
			\coordinate (3) at (1.5,1);
			\coordinate (11) at (-1.5,2);
			\coordinate (21) at (-0.75,2);
			\coordinate (22) at (0.75,2);
			\coordinate (221) at (0.75,3);
			\coordinate (Z1) at (3,1);
			\coordinate (Z2) at (3,2);
			\coordinate (Z3) at (3,3);
			%\only<10>{\draw [very thick, macouleur!50, fill=macouleur!20](-0.3,-0.3) rectangle (0.3,0.3);}
			%\only<11>{\draw [very thick, macouleur!50, fill=macouleur!20](-1.2,0.7) rectangle (-0.6,1.3);}
			%\only<12>{\draw [very thick, macouleur!50, fill=macouleur!20](-0.3,0.7) rectangle (0.3,1.3);}
			%\only<13>{\draw [very thick, macouleur!50, fill=macouleur!20](0.6,0.7) rectangle (1.2,1.3);}
			\draw [thick](0) -- (2) -- (21);
			\draw [thick](0)-- (3) ;
			\draw [thick](0) -- (1)-- (11);
			\draw [thick](2) -- (22)-- (221);
			\draw (0)node{\textcolor{blue}{\LARGE{$\bullet$}}}node[below]{\scriptsize{$m_\emptyset=0$}} ;
			\draw (2)node{\textcolor{blue}{\LARGE{$\bullet$}}};
			\draw (11)node{\textcolor{blue}{\LARGE{$\bullet$}}};
			\draw (22)node{\textcolor{blue}{\LARGE{$\bullet$}}};
			\draw (1) node[left]{\scriptsize{$\frac{1}{3}$}};
			\draw (2) node[right]{\scriptsize{$\frac{1}{3}$}}  ;
			\draw (3) node[ right]{\scriptsize{$\frac{1}{3}$}} ;
			\draw (11) node[left]{\scriptsize{$\frac{4}{9}$}}  ;
			\draw (21) node[right]{\scriptsize{$\frac{7}{9}$}};
			\draw (22) node[right]{\scriptsize{$\frac{7}{9}$}}  ;
			\draw (221) node[right]{\scriptsize{$4$}};
%			\draw (1) node[left]{\scriptsize{$m_1=\frac{1}{3}$}};
%			\draw (2) node[right]{\scriptsize{$m_2=\frac{1}{3}$}}  ;
%			\draw (3) node[ right]{\scriptsize{$m_3=\frac{1}{3}$}} ;
%			\draw (11) node[left]{\scriptsize{$m_{11}=\frac{4}{9}$}}  ;
%			\draw (21) node[right]{\scriptsize{$m_{21}=\frac{7}{9}$}};
%			\draw (22) node[right]{\scriptsize{$m_{22}=\frac{7}{9}$}}  ;
%			\draw (221) node[right]{\scriptsize{$m_{221}=4$}};
			%\draw (Z1) node{$Z_1=3$};
			%\draw (Z2) node{$Z_2=3$};
			%\draw (Z3) node{$Z_3=1$};
			%\only<10->{\draw [ultra thick, color=Maroon4!50] (-1.25,1.5) --(1.25,1.5);}
		\end{tikzpicture}
		\caption{A marked tree and the associated masses.}
		\label{exlimtreemartim}
	\end{figure}
Notice that the map $\bt^*\mapsto (m_u,u\in z_{n}(\bt^*))$ is $\cF_n$-measurable.

Moreover, by construction, we have the following link between the masses and the number of marked nodes up to some level:
	\begin{equation}\label{expM_n}
\forall n\in\N,\qquad \sum_{u\in z_n(\bt^*)}m_u = M_n(\bt^*).
	\end{equation}

\subsection{Marked Galton-Watson trees}

Let $\bp_0=(\bp_0(k,\eta),k\in\N,\eta\in\{0,1\})$ be a probability distribution on $\N\times\{0,1\}$. There exists a unique probability measure $\p_{\bp _0}$ on $(\T^*,\cF)$ such that, for all $h\in\NN$ and all $\bt^*=(\bt,(\eta_u)_{u\in\bt})\in\T^*$, if $\tau^*$ denotes the canonical random variable on $\T^*$ (i.e. the identity map of $\T^*$),
\begin{equation}\label{def_p}
\p_{\bp_0}(r_h^*(\tau^*)=r_h^*(\bt^*))=\prod_{u\in r_{h-1}(\bt)} { \bp_0(k_u(\bt),\eta_u)}.
\end{equation}
We say that the r.w. $\tau^*$ under $\p_{\bp_0}$ is a marked Galton-Watson (MGW) tree with reproduction-marking law $\bp_0$.

Equivalently, the distribution of a marked Galton-Watson tree with re\-pro\-duc\-tion-marking law $\bp_0$ is also characterized by an offspring distribution $\bp$ and a mark function $\bq:\N\longrightarrow [0,1]$ with
\[
\bp(k)=\bp_0(k,0)+\bp_0(k,1),\quad \bq(k)=
\frac{\bp_0(k,1)}{\bp(k)} \text{ if }\bp(k)\ne 0
\]
(the value of $\bq(k)$ has no importance if $\bp(k)=0$), or equivalently
\begin{equation}\label{def_p0}
\bp_0(k,1)=\bp(k)\bq(k),\qquad \bp_0(k,0)=\bp(k)(1-\bq(k)).
\end{equation}
This approach (giving $\bp$ and $\bq$) consists in first picking a Galton-Watson tree with offspring distribution $\bp$ then conditionnaly given the tree, adding marks on every node, independently of each others, with probabily $\bq(k)$ where $k$ is the out-degree of the node.

We will then write $\p_{\bp_0}$ or $\p_{\bp,\bq}$ depending on the adopted point of view, or simply $\p$ when the context is clear.

\medskip
Under $\p_{\bp,\bq}$, the random tree $\tau^*$ satisfies the so-called branching property that is
\begin{itemize}
\item The random variable $k_\emptyset(\tau^*)$ is distributed according to the probability distribution $\bp$,
\item Conditionally given $\{k_\emptyset(\tau^*)=j\}$, the root is marked with probability $\bq(j)$,
\item Conditionally on $\{k_\emptyset(\tau^*)=j\}$, the $j$ sub-trees attached to the root are i.i.d. random marked trees distributed according to the probability $\p_{\bp,\bq}$.
\end{itemize}

As a consequence, if we set $Z_n:=Z_n(\tau^*)$ and $M_n:=M_n(\tau^*)$, we have for every $n,p\in\N^*$, the following equality in distribution under $\p_{\bp,\bq}$,
\begin{equation}\label{expM_p}
	M_{n+p}\overset{(d)}{=}M_n+\sum_{i=1}^{Z_n}{\tilde{M}_{i,p}},
\end{equation}
where $(\tilde{M}_{i,p})_{i\geq 1}$ is a sequence of i.i.d copies of  $M_{p}$ and independent of $\cF_n$.

\section{Polynomial penalization}\label{sec:polynomial}

In this section, we consider an offspring distribution $\bp$ satisfying \eqref{condp} and that admits a moment of order $\ell\in\NN$, and a mark function $\bq$ satisfying \eqref{condq}. We denote by $\mu$ the mean of $\bp$. We study the penalization by $M_{n+p}^\ell$ i.e. we look for the limit for all $n\in\N$ and $\Lambda_n\in\cF_n$
\[
\lim_{p\to+\infty}\frac{\E[\indic_{\Lambda_n}M_{n+p}^\ell]}{\E[M_{n+p}^\ell]}=\lim_{p\to+\infty}\frac{\E[\indic_{\Lambda_n}\E[M_{n+p}^\ell|\cF_n]]}{\E[M_{n+p}^\ell]}\cdot
\]

To begin with, let us prove that this expression makes sense. Remark first that \eqref{condq} implies that $\E[M_{n+p}^\ell]\ne 0$ for $p\ge 1$.

\begin{lem}\label{propmomordk}
Let $\bp$ be an offspring distribution that admits an moment of order $\ell\in\NN$ and let $\bq$ be a mark function. Then, under $\p$, for every $p\in\N$, $Z_p$ and $M_p$ have a moment of order $\ell$.
\end{lem}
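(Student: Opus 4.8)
The plan is to first reduce the statement about $M_p$ to the corresponding statement about the generation sizes $Z_n$, and then to establish the latter by induction on the generation using the branching property. Since marks take values in $\{0,1\}$, every node contributes at most $1$ to $M_p$, so that
\[
0 \le M_p \le \sum_{n=0}^{p-1} Z_n.
\]
Consequently $M_p^\ell \le \bigl(\sum_{n=0}^{p-1} Z_n\bigr)^\ell$, and by Minkowski's inequality $\bigl\|\sum_{n=0}^{p-1} Z_n\bigr\|_\ell \le \sum_{n=0}^{p-1} \|Z_n\|_\ell$. Hence it suffices to prove that $Z_n$ admits a moment of order $\ell$ for every $n\in\N$; the integrability of $M_p$ then follows because $L^\ell$ is a vector space.

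For the generation sizes I would argue by induction on $p$. For $p=0$ we have $Z_0=1$, and for $p=1$ the variable $Z_1=k_\emptyset(\tau^*)$ is distributed according to $\bp$, which admits a moment of order $\ell$ by assumption. For the inductive step, the branching property gives the representation
\[
Z_{p+1} \overset{(d)}{=} \sum_{i=1}^{Z_p} X_i,
\]
where $(X_i)_{i\ge 1}$ are i.i.d. copies of $Z_1$ (i.e. of law $\bp$), independent of $Z_p$. The problem thus reduces to the following general claim, which is the technical heart of the argument: if $N$ and $X$ are nonnegative integer-valued random variables each admitting a moment of order $\ell$, and if $S=\sum_{i=1}^N X_i$ with $(X_i)$ i.i.d. copies of $X$ independent of $N$, then $S$ admits a moment of order $\ell$.

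To prove this claim I would condition on $N$ and expand by the multinomial theorem: for a fixed integer $m$,
\[
\E\Bigl[\bigl(\textstyle\sum_{i=1}^m X_i\bigr)^\ell\Bigr] = \sum_{k_1+\cdots+k_m=\ell} \binom{\ell}{k_1 \cdots k_m} \prod_{i=1}^m \E[X^{k_i}].
\]
Since $X\in L^\ell$ it has finite moments of every order $j\le \ell$, and in each term at most $\ell$ of the exponents $k_i$ are nonzero (the factors with $k_i=0$ contribute $1$). Grouping the tuples according to the number $r\le \ell$ of positive exponents and counting the $\binom{m}{r}$ choices of their positions, one sees that $m\mapsto \E[(\sum_{i=1}^m X_i)^\ell]$ is a polynomial in $m$ of degree $\ell$ with nonnegative coefficients, say $Q_\ell(m)$. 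Taking expectations over $N$ yields $\E[S^\ell]=\E[Q_\ell(N)]<\infty$, the finiteness being exactly guaranteed by $N\in L^\ell$. Applying this with $N=Z_p$ and $X=Z_1$ closes the induction.

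The main obstacle is controlling the random sum: one must verify that the conditional $\ell$-th moment of a sum of i.i.d. terms grows only polynomially, of degree exactly $\ell$, in the number of summands, so that the outer expectation over the random count $N$ remains finite precisely under the moment-$\ell$ hypothesis on $N$. The multinomial expansion makes this transparent; a slightly softer alternative would be an induction on $\ell$ combined with H\"older's inequality, but the explicit polynomial bound $Q_\ell$ is cleaner and will moreover be convenient later for computing the limiting quantities $\xi_j$.
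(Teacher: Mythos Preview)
Your proof is correct and follows essentially the same route as the paper: reduce $M_p$ to $\sum_{n<p} Z_n$, then show $Z_p\in L^\ell$ by induction via the branching representation $Z_{p+1}=\sum_{i=1}^{Z_p}X_i$ and a multinomial expansion, using that at most $\ell$ of the exponents are nonzero. The only cosmetic difference is that the paper bounds each product crudely by $\E[X^\ell]^\ell$ to get $\E[Z_{p+1}^\ell]\le \E[Z_p^\ell]\,\E[X^\ell]^\ell$, whereas you identify the conditional $\ell$-th moment as an explicit degree-$\ell$ polynomial $Q_\ell(m)$ in the number of summands; both observations rest on the same counting argument.
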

\begin{proof} We begin to prove that $Z_p$ has a moment of order $\ell$ by induction on $p$. The result is clear for $p=0$, as $Z_0=1$.
\smallbreak
 Let $p\in \N$ be such that $Z_j$ has a moment of order $\ell$ for all $j\leq p$. Let $X$, $(X_i)_{i\ge 1}$ be i.i.d. random variables with distribution $\bp$, independent of $\cF_p$.
We have:
\begin{align*}
	\E\left[Z_{p+1}^\ell\right]&=\E\left[\E\left[\left. \left(\sum_{i=1}^{Z_p}X_{i}\right)^\ell \right| \cF_p\right]\right]\\
	& =\E\left[\sum_{t_1+\ldots+t_{Z_p}=\ell}\binom{\ell}{t_1\cdots t_{Z_p}}\prod_{\ell=1}^{Z_p}\E\left[X^{t_{\ell}}\right]\right]\\
&\leq \E\left[\sum_{t_1+\ldots+t_{Z_p}=\ell}\binom{\ell}{t_1\cdots t_{Z_p}}
\E\left[X^{\ell}\right]^\ell\right]=\E\left[Z_p^\ell\right]\E\left[X^{\ell}\right]^\ell
\end{align*}
where in the last inequality, we use that there are at most $\ell$ non-zero $t_i$'s.
As $Z_p^\ell$ and $X^\ell$ are in $L^1$ by assumption, we have our first result. 

\medskip
The previous result implies that $\sum_{i=0}^{p-1}Z_i\in \mathrm{L}^\ell$ and, as $M_p\leq \sum_{i=0}^{p-1}Z_i$, we get the second result.
\end{proof}

We now give an expression of $\E\left[\left.M_{n+p}^\ell\right|\cF_n\right]$ that will be used troughout this section. Recall the equality in distribution \eqref{expM_p}. Using the same notations, we get for all $p,n\in \N$:
\begin{align}\label{expM_p^k}
\E\left[M_{n+p}^\ell\bigm| \cF_n\right] & =\sum_{j=0}^\ell\binom{\ell}{j}\E\left[M_n^{\ell-j}\left(\sum_{i=1}^{Z_n}\tilde{M}_{i,p}\right)^j\biggm| \cF_n\right]\nonumber\\
& =\sum_{j=0}^\ell\binom{\ell}{j}M_n^{\ell-j}\sum_{t_1+\cdots+t_{Z_n}=j}\binom{j}{t_1\cdots t_{Z_n}}\prod_{i=1}^{Z_n}\E[M_p^{t_i}].
\end{align}

\subsection{The sub-critical case $\mu<1$}

\subsubsection{A new martingale}

Before stating the main result of this section, let us state some integrability lemmas and introduce some notations.

\begin{lem}\label{momentkzinfty} Let $\bp$ be a sub-critical offspring distribution that admits a moment of order $\ell\in \NN$.  Let $\tilde Z_\infty$ be the total population size of the associated Galton-Watson tree i.e.
\[
\tilde Z_\infty=\mathrm{Card}(\tau)=\sum_{n=0}^{+\infty} Z_n.
\]
Then $\tilde{Z}_\infty$ has a moment of order $\ell$.
\end{lem}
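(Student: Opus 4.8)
The plan is to prove that $\tilde{Z}_\infty = \sum_{n\ge 0} Z_n$ has a moment of order $\ell$ by exploiting the branching property, which gives a recursive (distributional) decomposition of $\tilde{Z}_\infty$ that can be fed into an induction on $\ell$. First I would record the fundamental identity coming from the branching property: conditionally on $\{k_\emptyset = j\}$, the total size decomposes as
\[
\tilde{Z}_\infty \overset{(d)}{=} 1 + \sum_{i=1}^{j} \tilde{Z}_\infty^{(i)},
\]
where the $\tilde{Z}_\infty^{(i)}$ are i.i.d. copies of $\tilde{Z}_\infty$ attached to the $j$ subtrees of the root, independent of $k_\emptyset$ whose law is $\bp$. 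The subcriticality $\mu<1$ guarantees $\E[\tilde{Z}_\infty]<\infty$ (indeed $\E[\tilde{Z}_\infty] = \sum_n \mu^n = (1-\mu)^{-1}$), which is the base case $\ell=1$.

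Next I would set up the induction. Suppose $\tilde{Z}_\infty$ has all moments up to order $\ell-1$ finite; I want to deduce the moment of order $\ell$ is finite. Raising the decomposition to the power $\ell$ and using the multinomial expansion as in \eqref{expM_p^k}, I would write
\[
\E[\tilde{Z}_\infty^\ell] = \E\!\left[\Bigl(1 + \sum_{i=1}^{k_\emptyset} \tilde{Z}_\infty^{(i)}\Bigr)^{\!\ell}\,\right],
\]
then expand and isolate the single top-order term in which one subtree contributes $\tilde{Z}_\infty^{(i),\ell}$ with coefficient involving $\E[k_\emptyset]=\mu$. The key algebraic point is that this leading term produces $\mu\,\E[\tilde{Z}_\infty^\ell]$ on the right-hand side, so that one gets
\[
(1-\mu)\,\E[\tilde{Z}_\infty^\ell] = \text{(lower-order terms)}.
\]
All the remaining terms involve products of powers $\tilde{Z}_\infty^{(i),t_i}$ with each $t_i \le \ell-1$ and a moment $\E[k_\emptyset^m]$ with $m\le\ell$; these are finite by the induction hypothesis together with Lemma~\ref{propmomordk} (which gives $\E[Z_1^\ell]=\E[k_\emptyset^\ell]<\infty$). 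Dividing by $1-\mu>0$ yields finiteness of $\E[\tilde{Z}_\infty^\ell]$.

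The main obstacle, and the step requiring genuine care, is the justification of this algebraic manipulation: a priori one does not know $\E[\tilde{Z}_\infty^\ell]<\infty$, so I cannot simply subtract $\mu\,\E[\tilde{Z}_\infty^\ell]$ from both sides of an identity between possibly infinite quantities. To make the argument rigorous I would first run the computation with $\tilde{Z}_\infty$ replaced by the truncated total size $\tilde{Z}_\infty^{(N)} := \sum_{n=0}^N Z_n$, which is finite and in $L^\ell$ by Lemma~\ref{propmomordk}, and which satisfies the corresponding recursion $\tilde{Z}^{(N+1)}_\infty \overset{(d)}{=} 1 + \sum_{i=1}^{k_\emptyset}\tilde{Z}^{(N),(i)}_\infty$. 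Bounding the lower-order terms uniformly in $N$ (using the induction hypothesis applied to $\tilde{Z}_\infty^{(N)}\le \tilde{Z}_\infty$) gives a bound $\E[(\tilde{Z}_\infty^{(N)})^\ell]\le (1-\mu)^{-1}\,C$ independent of $N$, and then monotone convergence ($\tilde{Z}_\infty^{(N)}\uparrow \tilde{Z}_\infty$) transfers the uniform bound to $\E[\tilde{Z}_\infty^\ell]<\infty$. This truncation step is what turns the formal subtraction into a legitimate argument.
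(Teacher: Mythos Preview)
Your argument is correct and complete; the truncation step is exactly what is needed to make the formal subtraction rigorous, and the recursion $a_{N+1}\le \mu\,a_N + C$ does give the uniform bound.

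However, your route is genuinely different from the paper's. The paper argues via generating functions: from the branching identity one gets $G_\infty(s)=s\,g(G_\infty(s))$ for the generating function $G_\infty$ of $\tilde Z_\infty$, and differentiating yields
\[
G_\infty'(s)=\frac{g(G_\infty(s))}{1-s\,g'(G_\infty(s))},
\]
which has a finite limit $(1-\mu)^{-1}$ as $s\to 1$. An induction on the order of differentiation then shows that each $G_\infty^{(i)}(s)$ is a polynomial in the lower derivatives divided by $(1-s\,g'(G_\infty(s)))^i$, hence has a finite limit as $s\to 1$; this gives all moments up to $\ell$ because $g$ is $\mathscr C^\ell$ on $[0,1]$ by the moment hypothesis on $\bp$. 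The generating-function approach avoids the truncation device entirely and packages the ``isolate the top-order term'' step into the denominator $1-s\,g'(G_\infty(s))$. Your direct moment recursion is more elementary and makes the role of the $\ell$-th moment of $\bp$ very transparent (it controls the polynomial growth in $k_\emptyset$ of the lower-order remainder), at the cost of the extra truncation argument.
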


\begin{proof}
Let us denote by $g$ (resp. $G_\infty$) the generating function of $\bp$ (resp. $\tilde Z_\infty$). Then we have, with $(\tilde Z_{i,\infty})_{i\ge 1}$ i.i.d. copies of $\tilde Z_\infty$, for every $s\in [0,1]$,
	\[
			G_\infty(s)
			=\E\left[s^{1+\sum_{i=1}^{Z_1}\tilde{Z}_{i,\infty}}\right]
			=sg(G_\infty(s)).
	\]

Differentiating this relation, we obtain, for every $s\in[0,1)$,
		\begin{equation}\label{expGinftyder}
		G_\infty'(s)=\frac{g(G_\infty(s))}{1-sg'(G_\infty(s))}.
	\end{equation}
	Then, letting $s\to 1$, we have
	\begin{equation*}
		\lim_{s\to 1}G_\infty'(s)=\frac{1}{1-\mu}<+\infty.
	\end{equation*}
	Therefore, $\tilde{Z}_\infty$ admits a moment of order $1$, $\E[\tilde{Z}_\infty] =\frac{1}{1-\mu}$ and $G_\infty$ is of class $\mathscr{C}$ on $[0,1]$.
	
An obvious induction argument gives that, for every $i\le \ell$, there exists a polynomial function $P_i$ such that for every $s\in[0,1)$,
\[
G^{(i)}_\infty(s)=\frac{P_i(s,g(G_\infty(s)),\ldots,g^{(i)}(G_\infty(s)),G_\infty(s),G'_\infty(s),\ldots G_\infty^{(i-1)}(s))}{(1-sg'(G_\infty(s))^i}
\]
and again by induction we get that $G_\infty^{(i)}(s)$ admits a limit as $s\to 1$ which implies that $\tilde Z_\infty$ admits moments up to order $\ell$	
\end{proof}

Let us denote by $M_\infty$ the total number of marks on the tree. As $M_\infty\le \tilde Z_\infty$ a.s., $M_\infty$ also admits a moment of order $\ell$. We set for every $i\le \ell$
\begin{equation}
\xi_i=\E[M_\infty^i].
\end{equation}

The constants $\xi_i$ can be explicitely computed using the following recursion equation:
\begin{prop}\label{expxi_k}
	Let $\bp$ be an offspring distribution such that $\mu<1$, with a moment of order $\ell\in\NN$ and let $\bq$ be a mark function. Then we have
	\begin{multline*}
			(1-\mu)\xi_\ell=\sum_{j=0}^{\ell-1}\binom{\ell}{j}\E\left[M_1\sum_{t_1+\cdots +t_{Z_1}=j}\binom{j}{t_1\cdots t_{Z_1}}\prod_{i=1}^{Z_1}\xi_{t_i}\right]\\
			+\E\left[\sum_{\underset{t_i<\ell,\forall i}{t_1+\ldots+t_{Z_1}=\ell}}\binom{\ell}{t_1\cdots t_{Z_1}}\prod_{i=1}^{Z_1}\xi_{t_i}\right].
	\end{multline*}
\end{prop}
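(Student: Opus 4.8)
The plan is to derive the recursion from the branching property applied at the root. Taking $n=1$ in the distributional identity \eqref{expM_p} and letting $p\to+\infty$ yields
\[
M_\infty \overset{(d)}{=} M_1 + \sum_{i=1}^{Z_1}\tilde M_{i,\infty},
\]
where $(\tilde M_{i,\infty})_{i\ge 1}$ are i.i.d. copies of $M_\infty$, independent of $\cF_1$. Here I use that in the subcritical case $M_p\uparrow M_\infty$ almost surely, so that $\xi_i=\E[M_\infty^i]=\lim_{p}\E[M_p^i]$ by monotone convergence, which reconciles the present definition of $\xi_i$ with the notation of Theorem \ref{thm:limit_polynome}. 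All moments appearing below are finite since $M_\infty\le \tilde Z_\infty$ and $\tilde Z_\infty$ has a moment of order $\ell$ by Lemma \ref{momentkzinfty}; this legitimizes every expectation and interchange used.

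First I would raise this identity to the power $\ell$ and expand by the binomial theorem, conditioning on $\cF_1$:
\[
\xi_\ell=\sum_{j=0}^\ell\binom{\ell}{j}\E\!\left[M_1^{\ell-j}\Big(\sum_{i=1}^{Z_1}\tilde M_{i,\infty}\Big)^{\!j}\right].
\]
The key simplification is that $M_1=\eta_\emptyset\in\{0,1\}$, so $M_1^{\ell-j}=M_1$ whenever $j<\ell$ while $M_1^{0}=1$ when $j=\ell$; this lets me split off the $j=\ell$ term. For each $j$ I then expand the inner power by the multinomial formula and use the independence of the $\tilde M_{i,\infty}$ from one another and from $\cF_1$ to replace $\E[\tilde M_{i,\infty}^{t_i}]$ by $\xi_{t_i}$, obtaining
\[
\xi_\ell=\sum_{j=0}^{\ell-1}\binom{\ell}{j}\E\!\left[M_1\!\!\sum_{t_1+\cdots+t_{Z_1}=j}\!\!\binom{j}{t_1\cdots t_{Z_1}}\prod_{i=1}^{Z_1}\xi_{t_i}\right]+\E\!\left[\sum_{t_1+\cdots+t_{Z_1}=\ell}\binom{\ell}{t_1\cdots t_{Z_1}}\prod_{i=1}^{Z_1}\xi_{t_i}\right].
\]

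The final step, which is the real point of the computation, is to isolate the occurrences of the unknown $\xi_\ell$ hidden in the last sum. In a composition $t_1+\cdots+t_{Z_1}=\ell$, the total $\ell$ can be carried by a single coordinate only when that coordinate equals $\ell$ and all others vanish; there are exactly $Z_1$ such compositions, each with multinomial coefficient $1$ and product $\xi_\ell\,\xi_0^{Z_1-1}=\xi_\ell$ since $\xi_0=1$. Separating these terms contributes $\E[Z_1]\,\xi_\ell=\mu\,\xi_\ell$, while the remaining terms are precisely the constrained sum over $\{t_i<\ell \text{ for all } i\}$. Moving $\mu\xi_\ell$ to the left-hand side then gives the announced formula. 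I expect the only delicate points to be the justification of the limiting identity together with the finiteness of the moments (both handled by Lemma \ref{momentkzinfty} and monotone convergence) and the bookkeeping that extracts exactly the $\xi_\ell$-terms; the rest is a routine binomial and multinomial expansion.
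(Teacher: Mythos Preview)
Your argument is correct and follows essentially the same approach as the paper: both use the branching decomposition at generation $1$, the fact that $M_1\in\{0,1\}$ so $M_1^{\ell-j}=M_1$ for $j<\ell$, the multinomial expansion, and the separation of the $Z_1$ compositions with a single nonzero coordinate to extract $\mu\xi_\ell$. The only cosmetic difference is that the paper expands $\E[M_{p+1}^\ell]-\mu\E[M_p^\ell]$ at finite $p$ via \eqref{expM_p^k} and then lets $p\to\infty$ by monotone convergence, whereas you first pass to the limiting identity for $M_\infty$ and then expand; both orderings are justified by Lemma~\ref{momentkzinfty}.
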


\begin{proof} 
First notice that, as $M_1$ is equal to 0 or 1, we have $M_1^j=M_1$ for every $j\ge 1$.
Using \eqref{expM_p^k}, we can write:
\begin{align*}
\E[M_{p+1}^\ell]-\mu\E[M_{p}^\ell]&=\sum_{j=0}^{\ell-1}\binom{\ell}{j}\E\left[M_1\sum_{t_1+ \ldots +t_{Z_1}=j}\binom{j}{t_1\dots t_{Z_1}}\prod_{i=1}^{Z_1}\E[M_p^{t_i}]\right]\\
&\qquad+\E\left[\sum_{\underset{t_i<\ell,\forall i}{t_1+\ldots+t_{Z_1}=\ell}}\binom{j}{t_1\dots t_{Z_1}}\prod_{i=1}^{Z_1}\E[M_p^{t_i}]\right]\\
&:=\sum_{j=0}^{\ell-1}\binom{\ell}{j}\E[A_{j,p}]+\E[B_p].
\end{align*}
As $(A_{j,p})_{p\ge 0}$ and $(B_p)_{p\ge 0}$ are non-negative and non-decreasing sequences, we obtain $\xi_\ell$ taking the limit when $p$ goes to infinity and using Beppo Levi's theorem. 
\end{proof}

We can now state the main result of this section. Let us recall, for $\ell\in\N$,  the definition of the function $f_\ell$ defined for all  $s,z\in\N$ by:
\begin{equation}\label{def_fl}
	f_\ell(s,z):=\frac{1}{\xi_\ell}\sum_{i=0}^{\ell}\binom{\ell}{i}s^{\ell-i}\sum_{t_1+\ldots+t_z=i}\binom{i}{t_1\dots t_z}\prod_{j=1}^z\xi_{t_j}
\end{equation}
with the convention $f_\ell(s,0)=\frac{s^\ell}{\xi_\ell}$.
Notice that this definition is also valid for $\ell=0$ and we get $f_0(s,z)=1$.
\begin{theo}\label{limcondmu<1k}
Let $\bp$ be an offspring distribution satisfying \eqref{condp}, that admits a moment of order $\ell\in\NN$ and such that $\mu<1$, and let $\bq$ be a mark function satisfying \eqref{condq}. For every $n\in \N$ and every $\Lambda_n\in\cF_n$, we have
	\begin{equation}\label{limcondmartk}
		\lim_{p\to +\infty}\frac{\E\left[\indic_{\Lambda_n}M_{n+p}^\ell\right]}{\E\left[M_{n+p}^\ell\right]}=\E[\indic_{\Lambda_n}f_\ell(M_n ,Z_n)].
	\end{equation}
	\end{theo}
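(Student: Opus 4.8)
The plan is to condition everything on $\cF_n$ via the closed formula \eqref{expM_p^k} and to exploit the fact that, in the sub-critical regime, the sequence $(M_{n+p})_{p\ge 0}$ is non-decreasing in $p$. Since $M_{n+p}$ counts the marks up to level $n+p-1$, we have $M_{n+p}\uparrow M_\infty$ as $p\to+\infty$, hence $M_{n+p}^\ell\uparrow M_\infty^\ell$; by Lemma \ref{momentkzinfty} and the bound $M_\infty\le \tilde Z_\infty$ we know $M_\infty^\ell\in\mathrm{L}^1$ and $M_\infty<+\infty$ almost surely. This monotonicity is the engine of the whole argument: it will let me pass to the limit inside every (conditional) expectation by monotone convergence, and it also guarantees that the denominator does not degenerate.

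First I would identify the pointwise limit of the conditional expectation. Starting from \eqref{expM_p^k},
\[
\E[M_{n+p}^\ell\mid\cF_n]=\sum_{j=0}^\ell\binom{\ell}{j}M_n^{\ell-j}\sum_{t_1+\cdots+t_{Z_n}=j}\binom{j}{t_1\cdots t_{Z_n}}\prod_{i=1}^{Z_n}\E[M_p^{t_i}],
\]
I let $p\to+\infty$. Each factor $\E[M_p^{t_i}]$ converges to $\xi_{t_i}$, again by monotone convergence since $M_p^{t_i}\uparrow M_\infty^{t_i}$ and $\xi_{t_i}=\E[M_\infty^{t_i}]$. As the outer sum over $j$ is finite and, on the event $\{Z_n=z\}$, the inner sum over compositions and the product are finite, this is an elementary limit of a finite sum of finite products, and it yields, almost surely,
\[
\E[M_{n+p}^\ell\mid\cF_n]\xrightarrow[p\to+\infty]{}\sum_{j=0}^\ell\binom{\ell}{j}M_n^{\ell-j}\sum_{t_1+\cdots+t_{Z_n}=j}\binom{j}{t_1\cdots t_{Z_n}}\prod_{i=1}^{Z_n}\xi_{t_i}=\xi_\ell\,f_\ell(M_n,Z_n),
\]
the last equality being exactly the definition \eqref{def_fl} of $f_\ell$.

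Then I would conclude by monotone convergence applied twice. Writing the numerator as $\E[\indic_{\Lambda_n}M_{n+p}^\ell]=\E[\indic_{\Lambda_n}\E[M_{n+p}^\ell\mid\cF_n]]$, the integrand is non-negative and non-decreasing in $p$ (conditional expectation preserves the monotonicity of $M_{n+p}^\ell$), so it converges to $\E[\indic_{\Lambda_n}\xi_\ell f_\ell(M_n,Z_n)]=\xi_\ell\,\E[\indic_{\Lambda_n}f_\ell(M_n,Z_n)]$. For the denominator, monotone convergence from $M_{n+p}^\ell\uparrow M_\infty^\ell$ gives $\E[M_{n+p}^\ell]\to\E[M_\infty^\ell]=\xi_\ell$, which is strictly positive by \eqref{condq}. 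Dividing the two limits yields the announced formula. The only genuine input beyond bookkeeping is the integrability $M_\infty\in\mathrm{L}^\ell$ together with the almost sure finiteness of $M_\infty$, both specific to the sub-critical case and furnished by Lemma \ref{momentkzinfty}; everything else is a careful application of monotone convergence, so I expect the main obstacle to be purely notational — keeping the finite multinomial sums under control while justifying the interchange of limit and (conditional) expectation.
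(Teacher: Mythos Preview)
Your proposal is correct and follows essentially the same route as the paper: expand $\E[M_{n+p}^\ell\mid\cF_n]$ via \eqref{expM_p^k}, use the monotonicity $M_{n+p}^\ell\uparrow M_\infty^\ell$ together with $M_\infty\in\mathrm{L}^\ell$ (from Lemma~\ref{momentkzinfty}) to pass to the limit by monotone convergence (Beppo Levi), and identify the limit with $\xi_\ell f_\ell(M_n,Z_n)$ before dividing by $\E[M_{n+p}^\ell]\to\xi_\ell$. The paper's proof is terser but invokes exactly the same ingredients in the same order.
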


\begin{proof}
Let $p,n\in\N$. Using \eqref{expM_p^k}, we get:
\begin{multline*}
		\E\left[\mathds{1}_{\Lambda_n}M_{n+p}^\ell\right]\\
=\E\left[\mathds{1}_{\Lambda_n}	\sum_{j=0}^\ell\binom{\ell}{j}M_n^{\ell-j}\sum_{t_1+\ldots+t_{Z_n}=j}\binom{j}{t_1\cdots t_{Z_n}}\prod^{Z_n}_{i=1}\E\left[M_{p}^{t_{i}}\right]\right].
\end{multline*}
Using again Beppo Levi's theorem, we obtain
\[\lim_{p\rightarrow+\infty}\E\left[\mathds{1}_{\Lambda_n}M_{n+p}^\ell\right]=\xi_\ell\E[\mathds{1}_{\Lambda_n}f_\ell(M_n,Z_n)]\]
and we conclude using
\[
\lim_{p\to+\infty}\E[M_{n+p}^\ell]=\xi_\ell.
\]
\end{proof}

\begin{rqqq}
For $\ell=1$, the obtained martingale is
\[
H_{n,1}=Z_n+\frac{1}{\xi_1}M_n.
\]
\end{rqqq}

\subsubsection{Change of probability and the new random tree}

We still consider a subcritical offspring distribution $\bp$ satisfying \eqref{condp} and that admits a moment of order $\ell\in\NN$, and a mark function $\bq$ satisfying \eqref{condq}. We define a new probability measure $\Q_\ell$ on $(\T^*,\cF)$ by
\begin{equation}\label{defQk}
\forall n\in\N,\ \forall \Lambda_n\in\cF_n,\ \Q_\ell(\Lambda_n)=\E[\indic_{\Lambda_n}f_\ell(Z_n,M_n)].
\end{equation}

To describe this probability measure $\Q_\ell$, let us define a $\T^*$-valued random tree $\tau_\ell(\bp,\bq)$ defined on some probability space $(\Omega,\mathscr{A},P)$.

\begin{df}\label{def_tauk}
The tree $\tau_{\ell}(\bp,\bq)$ is a multitype random marked tree that is defined recursively as follows:
\begin{itemize}
\item The types of the nodes run from $0$ to $\ell$;
\item The type of the root is $\ell$;
\item Let us suppose that the tree $r_n^*(\tau_\ell(\bp,\bq))$ is constructed and that the types of the nodes at generation $n$ are known for some $n\ge 0$. Then all individuals at generation $n$ reproduce and are marked independently of each other conditionally given $r_n^*(\tau_\ell(\bp,\bq))$ according to its type and its mass (recall the definition of the mass of a node introduced in Section \ref{masses}):
\begin{itemize}
\item an individual $u$ of type $0$, has $k\in\N$ children and a mark $\eta\in\{0,1\}$ with probability:
\[
p_{0}(k,\eta):=\begin{cases} \bp(k)(1-\bq(k)) & \text{if }\eta=0,\\
\bp(k)\bq(k) & \text{if } \eta=1,
\end{cases}
\]
\item an individual $u$ of type $i\in\llbracket 1,\ell\rrbracket$ and with mass $m_u$, has $k \in\N$ children and a mark $\eta\in\{0,1\}$ with probability:
\[
p_{i}(k,\eta):=\frac{p_{0}(k,\eta)f_i(m_u+\eta,k)}{f_i(m_u,1)};
\]
\end{itemize}	
\item We then choose the types of the nodes at generation $n+1$. At each generation, the sum of the types must be equal to $\ell$. Moreover, conditionally given $z_{n+1}(\tau_\ell(\bp,\bq))=z_{n+1}$, the probability that the nodes at generation $n+1$ have respective types $(t_u,u\in z_{n+1})$ with $\sum_{u\in z_{n+1}}t_u=\ell$, is 
\begin{equation}\label{tauxchoixuplettype}
	\gamma_{\ell,(t_u)_{u\in z_{n+1}}}:=\binom{\ell}{t_u,u\in z_{n+1}}\frac{\prod_{u\in z_{n+1}}f_{t_u}(m_u,1)\xi_{t_u}}{\xi_\ell f_\ell(M_{n+1},Z_{n+1})},
\end{equation}
with the notation
\begin{equation*}
	\binom{\ell}{t_u,u\in z_{n+1}}:= \frac{\ell!}{\prod_{u\in z_{n+1}}t_u!}.
\end{equation*} 
for the usual multinomial coefficient.
\end{itemize}
\end{df}

\begin{rqqq}
The mass $m_u$ of a node $u$ (and hence its offspring distribution) depends on all extinct nodes in the past and not only on the ancestors of the node. Moreover, the choice of the types of the nodes at some generation depends on the size of this generation. Consequently, the tree $\tau_\ell$ does not satisfy the branching property.
\end{rqqq}

\begin{rqqq}
If all the nodes below generation $n$ are not marked, all the masses $m_u$ for $u\in z_n$ are null and hence, for every node $u\in z_n$ with type $i\ge 1$, we have
\[
p_i(0,0)=0.
\]
Therefore, the tree $\tau_\ell(\bp,\bq)$ cannot die out without any marked node.
\end{rqqq}

 \begin{lem}\label{relf_k}
Formula \eqref{tauxchoixuplettype} indeed defines a probability distribution on $\llbracket 0,\ell\rrbracket ^{Z_{n+1}}$.
\end{lem}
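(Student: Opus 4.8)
The plan is to verify the two defining properties of a probability distribution on $\llbracket 0,\ell\rrbracket^{Z_{n+1}}$: non-negativity of each $\gamma_{\ell,(t_u)_{u\in z_{n+1}}}$, and the fact that these weights sum to $1$ over all admissible tuples $(t_u)_{u\in z_{n+1}}$ with $\sum_{u\in z_{n+1}}t_u=\ell$ (every other tuple being assigned mass $0$). Non-negativity is immediate: each $\xi_{t_u}=\E[M_\infty^{t_u}]$ is non-negative, the multinomial coefficient is a positive integer, and $f_j(s,z)\ge 0$ for $s,z\ge 0$ since \eqref{def_fl} is a sum of non-negative terms (recall the masses built in Subsection \ref{masses} are non-negative). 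Positivity of the denominator $\xi_\ell f_\ell(M_{n+1},Z_{n+1})$ for $Z_{n+1}\ge 1$ follows from \eqref{condq}, which guarantees that $M_\infty$ is not a.s.\ zero, whence $\xi_\ell>0$.

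The crucial ingredient is a probabilistic reading of $f_\ell$. Let $(M_\infty^{(u)})_{u\in z_{n+1}}$ be independent copies of $M_\infty$ and set $S_z=\sum_{j=1}^z M_\infty^{(j)}$. Expanding $(s+S_z)^\ell$ by the binomial theorem, then each power $S_z^i$ by the multinomial theorem, and factorising the expectation by independence, one recognises exactly the right-hand side of \eqref{def_fl}:
\[
\xi_\ell f_\ell(s,z)=\E\bigl[(s+S_z)^\ell\bigr],\qquad\text{and in particular}\qquad \xi_t\, f_t(m,1)=\E\bigl[(m+M_\infty)^t\bigr].
\]
I would establish this identity first, as a short standalone computation, since it is the engine of the whole argument.

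With this in hand, the summation collapses through the multinomial theorem. Writing $z=Z_{n+1}$ and using $\xi_{t_u}f_{t_u}(m_u,1)=\E[(m_u+M_\infty^{(u)})^{t_u}]$ together with the independence of the copies,
\[
\sum_{\substack{(t_u)_{u\in z_{n+1}}\\ \sum_u t_u=\ell}}\binom{\ell}{t_u,u\in z_{n+1}}\prod_{u\in z_{n+1}}\xi_{t_u}f_{t_u}(m_u,1)
=\E\Biggl[\Bigl(\sum_{u\in z_{n+1}}\bigl(m_u+M_\infty^{(u)}\bigr)\Bigr)^\ell\Biggr].
\]
Since $\sum_{u\in z_{n+1}}m_u=M_{n+1}$ by \eqref{expM_n} and $\sum_{u\in z_{n+1}}M_\infty^{(u)}=S_{Z_{n+1}}$, the right-hand side equals $\E[(M_{n+1}+S_{Z_{n+1}})^\ell]=\xi_\ell f_\ell(M_{n+1},Z_{n+1})$, which is precisely the denominator in \eqref{tauxchoixuplettype}. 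Hence the $\gamma_{\ell,(t_u)}$ sum to $1$.

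The only genuine obstacle is spotting the representation $\xi_\ell f_\ell(s,z)=\E[(s+S_z)^\ell]$; once it is available, the sum over types reduces in one line via the multinomial theorem, and the mass identity $\sum_u m_u=M_{n+1}$ supplies the exact normalisation. Minor points to dispatch separately are the degenerate case $Z_{n+1}=0$ (extinction), where the assertion is vacuous, and the well-definedness of the denominator, handled by \eqref{condq} as above.
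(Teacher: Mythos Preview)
Your proof is correct and follows essentially the same route as the paper's: both establish the identity
\[
\xi_\ell\, f_\ell(M_{n+1},Z_{n+1})=\sum_{\sum_u t_u=\ell}\binom{\ell}{t_u,\,u\in z_{n+1}}\prod_{u\in z_{n+1}}\xi_{t_u}f_{t_u}(m_u,1)
\]
by applying the multinomial theorem to $\bigl(\sum_u(m_u+X_u)\bigr)^\ell$ for independent copies $X_u$, together with the mass identity $\sum_u m_u=M_{n+1}$ from \eqref{expM_n}. The only difference is in how the key moment representation is obtained: the paper takes $X_u$ to be copies of $M_p$ and passes to the limit $p\to\infty$ via Beppo Levi (thereby reusing the already-proved convergence $\E[M_{n+p}^\ell\mid\cF_n]\to\xi_\ell f_\ell(M_n,Z_n)$), whereas you work directly with copies of $M_\infty$ and the closed-form identity $\xi_\ell f_\ell(s,z)=\E[(s+S_z)^\ell]$. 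Your version is slightly more self-contained and avoids the limit argument, but the underlying mechanism is identical.
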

\begin{proof}
	This result is equivalent to show that for all $n\in\N$, we have
	\begin{equation}\label{relf_kf_t}
		f_\ell(M_n,Z_n)=\frac{1}{\xi_\ell}\sum_{\sum_{u\in z_n}t_u=\ell}\binom{\ell}{t_u,u\in z_n}\prod_{u\in z_n}f_{t_u}(m_u,1)\xi_{t_u}.
	\end{equation}
	To obtain \eqref{relf_kf_t}, we need to obtain an alternate expression for the a.s. limit of $\E[M_{n+p}^\ell|\cF_n]$, as we already know that this limit is $\xi_\ell f_\ell(M_n,Z_n)$ a.s. For this purpose, according to \eqref{expM_n} and the branching property, conditionally on $\cF_n$, we have: 
\[M_{n+p}\overset{(d)}{=}\sum_{u\in z_n}m_u+M_{u,p},\]
where the processes $(M_{u,.}, u\in \mathscr U)$ are i.i.d. copies of $M$, independent of $\cF_n$. As a result: 
\begin{align*}
\E[M_{n+p}^\ell|\cF_n]&=\sum_{\sum_{u\in z_n}t_u=\ell}\binom{\ell}{t_u,u\in z_n}\prod_{v\in z_n}\E\left[(m_v+M_{v,p})^{t_v}|\cF_n\right]\\
&=\sum_{\sum_{u\in z_n}t_u=\ell}\binom{\ell}{t_u,u\in z_n}\prod_{v\in z_n}\sum_{j=0}^{t_v}\binom{t_v}{j}m_v^{t_v-j}\E\left[M_{p}^j\right].
\end{align*}
Using Beppo Levi's theorem:
\[\lim_{p\rightarrow+\infty}\E[M_{p+n}^\ell|\cF_n]=\sum_{\sum_{u\in z_n}t_u=\ell}\binom{\ell}{t_u,u\in z_n}\prod_{v\in z_n}\sum_{j=0}^{t_v}\binom{t_v}{j}m_v^{t_v-j}\xi_j,\]
and we easily conclude as, thanks to \eqref{def_fl}, 
\[
f_{t_v}(m_v,1)\xi_{t_{v}}=\sum_{j=0}^{t_v}\binom{t_v}{j}m_v^{t_v-j}\xi_j.
\]
\end{proof}

In order to describe the new probability $\Q_\ell$, we need an alternative expression of $f_\ell(M_{{n+1}},Z_{n+1})$: 
\begin{lem}\label{relf_k_n+1}
	For all $n\in\N$, we have
	\begin{equation}\label{relf_k,n+1f_t}
		f_\ell(M_{{n+1}},Z_{n+1})=\frac{1}{\xi_\ell}\sum_{\sum_{u\in z_n}t_u=\ell}\binom{\ell}{t_u,u\in z_n}{\prod_{u\in z_n}}f_{t_u}(m_u+\eta_u,k_u)\xi_{t_u},
	\end{equation}
	with $k_u$ the number of children of the individual $u$ and $\eta_u$ its mark.
\end{lem}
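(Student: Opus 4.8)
The plan is to copy the structure of the proof of Lemma~\ref{relf_k}, but to split the marks counted by $M_{n+1+p}$ along the nodes of generation~$n$ instead of generation~$n+1$. This is the decisive choice: it makes the masses enter only through the identity \eqref{expM_n}, namely $\sum_{u\in z_n}m_u=M_n$, and thereby sidesteps the redistribution term in the mass recursion \eqref{valmass}. I would begin from the a.s.\ identity
\[
\lim_{p\to+\infty}\E\!\left[M_{n+1+p}^\ell\mid\cF_{n+1}\right]=\xi_\ell\,f_\ell(M_{n+1},Z_{n+1}),
\]
which is the limit computed in Theorem~\ref{limcondmu<1k} and recalled in the proof of Lemma~\ref{relf_k}, written at generation $n+1$ instead of~$n$.

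The goal is then a second expression for this same limit. Separating the marks of $M_{n+1+p}$ into those below generation $n$, those carried by generation $n$, and those in generations $n+1,\dots,n+p$, and grouping the last ones by their generation-$n$ ancestor, the branching property together with \eqref{expM_n} yields, conditionally on $\cF_{n+1}$, the equality in distribution
\[
M_{n+1+p}\overset{(d)}{=}\sum_{u\in z_n}\Bigl(m_u+\eta_u+\sum_{j=1}^{k_u}\tilde M_{uj,p}\Bigr)=:\sum_{u\in z_n}X_{u,p},
\]
where the variables $\tilde M_{uj,p}$, indexed by the children $uj\in z_{n+1}$, are i.i.d.\ copies of $M_p$ independent of $\cF_{n+1}$. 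Because distinct ancestors $u$ carry disjoint subtrees, the family $(X_{u,p})_{u\in z_n}$ is independent given $\cF_{n+1}$, so the multinomial formula gives
\[
\E\!\left[M_{n+1+p}^\ell\mid\cF_{n+1}\right]=\sum_{\sum_{u\in z_n}t_u=\ell}\binom{\ell}{t_u,u\in z_n}\prod_{u\in z_n}\E\!\left[X_{u,p}^{t_u}\mid\cF_{n+1}\right].
\]

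It then remains to compute each factor in the limit. Expanding $X_{u,p}^{t}$ by the binomial and multinomial theorems, treating $m_u$ and $\eta_u$ as $\cF_{n+1}$-measurable constants, and using $\E[M_p^{r}]\to\xi_r$, Beppo Levi's theorem gives
\[
\lim_{p\to+\infty}\E\!\left[X_{u,p}^{t}\mid\cF_{n+1}\right]=\sum_{i=0}^{t}\binom{t}{i}(m_u+\eta_u)^{t-i}\sum_{r_1+\cdots+r_{k_u}=i}\binom{i}{r_1\cdots r_{k_u}}\prod_{j=1}^{k_u}\xi_{r_j}=\xi_t\,f_t(m_u+\eta_u,k_u),
\]
the last equality being exactly the definition \eqref{def_fl} of $f_t$; the convention $f_t(\cdot,0)=(\cdot)^t/\xi_t$ takes care of the childless ancestors. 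Letting $p\to+\infty$ in the multinomial expansion (again by Beppo Levi, all terms being non-negative and non-decreasing in $p$) and comparing the outcome with $\xi_\ell f_\ell(M_{n+1},Z_{n+1})$ yields \eqref{relf_k,n+1f_t} after dividing by $\xi_\ell$.

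The only genuinely delicate point—and the step I would think through first—is the bookkeeping choice just described. If one instead grouped along generation $n+1$ and applied Lemma~\ref{relf_k} directly, the masses $m_v$ of \eqref{valmass}, together with their childless-node redistribution term, would enter the computation and the product structure over $u\in z_n$ (in particular the factors $(m_u+\eta_u)^{t_u}=\xi_{t_u}f_{t_u}(m_u+\eta_u,0)$ attached to childless ancestors) would be obscured. Grouping the future subtrees by their generation-$n$ ancestor is precisely what collapses the contribution of each $u$ to $\xi_{t_u}f_{t_u}(m_u+\eta_u,k_u)$; everything else reduces to the monotone-convergence argument already used above.
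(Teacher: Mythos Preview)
Your proof is correct and follows essentially the same approach as the paper: both decompose the future marks by grouping along the generation-$n$ ancestors, write $M_{n+p}$ (or, with your harmless index shift, $M_{n+1+p}$) as $\sum_{u\in z_n}\bigl(m_u+\eta_u+\sum_{j=1}^{k_u}\tilde M_{uj,\cdot}\bigr)$, expand by the multinomial and binomial theorems, and pass to the limit via monotone convergence to recognise each factor as $\xi_{t_u}f_{t_u}(m_u+\eta_u,k_u)$. Your additional commentary on why grouping at generation $n$ rather than $n+1$ is the right bookkeeping choice is accurate but not needed for the argument itself.
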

\begin{proof} The proof is similar to the proof of Lemma \ref{relf_k}: we write 
\[M_{p+n}=\sum_{u\in z_n}\left(m_u+\eta_u+\sum_{i=1}^{k_u}M_{i,p-1}\right)\]
where the processes $(M_{i,.}, i\ge0)$ are i.i.d. copies of $M$ independent of $\cF_{n+1}$. Thus, $\E[M_{n+p}^\ell|\cF_{n+1}]$ is equal to:
\begin{multline*}
\sum_{\sum_{u\in z_n}t_u=\ell}\binom{\ell}{t_u,u\in z_n}\prod_{v\in z_n}\sum_{j=0}^{t_v}\binom{t_v}{j}(m_v+\eta_v)^{t_v-j}\E\left[\left(\left.\sum_{i=1}^{k_u}M_{i,p-1}\right)^j\right|\cF_{n+1}\right]\\
=\sum_{\sum_{u\in z_n}t_u=\ell}\binom{\ell}{t_u,u\in z_n}\prod_{v\in z_n}\sum_{j=0}^{t_v}\binom{t_v}{j}(m_v+\eta_v)^{t_v-j}\\
\sum_{r_1+\dots+r_{k_u}=j}\binom{j}{r_1\dots r_{k_u}}\prod_{i=1}^{k_u}\E[M_{i,p-1}^{r_i}].
\end{multline*}
Taking the limit when $p$ goes to infinity, thanks to Beppo Levi's theorem, we obtain \eqref{relf_k,n+1f_t}.
\end{proof}

\begin{theo}\label{probunderQ_krn}
The probability $\Q_\ell$ defined by \eqref{defQk} is the distribution of the tree $\tau_\ell(\bp,\bq)$ of Definition \ref{def_tauk}.
\end{theo}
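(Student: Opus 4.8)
The plan is to prove that the law of the canonical random marked tree $\tau^*$ under $\Q_\ell$ coincides with the law of $\tau_\ell(\bp,\bq)$ by checking that the two measures agree on the finite-dimensional cylinders generated by $r_n^*$, and to do this inductively on $n$. Since $\cF=\bigvee_{n\ge0}\cF_n$, agreement on all $\cF_n$ suffices. The induction will simultaneously track the \emph{marked tree} up to level $n$ \emph{and} the distribution of the types of the nodes at generation $n$; the correct induction hypothesis is that, under $\Q_\ell$, the conditional law of the types $(t_u)_{u\in z_n}$ given $r_n^*(\tau^*)$ is exactly the law $\gamma_{\ell,(t_u)}$ of \eqref{tauxchoixuplettype}, and that the marginal law of $r_n^*(\tau^*)$ matches that of $r_n^*(\tau_\ell(\bp,\bq))$.

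First I would establish the base case $n=0$: the root has type $\ell$ with probability one on both sides, which is immediate from the definition of $\tau_\ell$ and from $f_\ell(M_0,Z_0)=f_\ell(0,1)=1$ (so that $\Q_\ell$ and $\p$ assign the same trivial weight at level $0$). For the inductive step, I would compute, for a fixed configuration $(k_u,\eta_u)_{u\in z_n}$ describing how generation $n$ reproduces and is marked, the one-step transition probability under $\Q_\ell$. By the definition \eqref{defQk} together with the $\cF_{n+1}$-martingale property of $f_\ell(M_n,Z_n)$, the $\Q_\ell$-probability of extending $r_n^*$ by a given generation-$(n+1)$ configuration equals the $\p$-probability of that configuration, namely $\prod_{u\in z_n}\bp_0(k_u,\eta_u)$, reweighted by the Radon--Nikodym ratio $f_\ell(M_{n+1},Z_{n+1})/f_\ell(M_n,Z_n)$. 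The key computation is then to match this $\p$-weight-times-ratio against the product, over the same extension, of the type-dependent reproduction laws $p_{t_u}(k_u,\eta_u)$ of Definition \ref{def_tauk}, averaged against the type distribution $\gamma_{\ell,(t_u)}$ from the inductive hypothesis.

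The heart of the matter is a bookkeeping identity: summing the $\tau_\ell$-side weight over the types $(t_u)_{u\in z_n}$ of generation $n$ and over the types at generation $n+1$, one must recover precisely the $\Q_\ell$ transition. Concretely, the type-$t_u$ reproduction law contributes a factor $p_{t_u}(k_u,\eta_u)=\bp_0(k_u,\eta_u)f_{t_u}(m_u+\eta_u,k_u)/f_{t_u}(m_u,1)$, so the products of $f_{t_u}(m_u,1)$ coming from $\gamma_{\ell,(t_u)}$ telescope against the denominators, leaving $\prod_u \bp_0(k_u,\eta_u)$ multiplied by $\prod_u f_{t_u}(m_u+\eta_u,k_u)\xi_{t_u}/\xi_\ell$ summed against the multinomial coefficient over $\sum t_u=\ell$. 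At this point Lemma \ref{relf_k_n+1} (equation \eqref{relf_k,n+1f_t}) identifies that sum as exactly $f_\ell(M_{n+1},Z_{n+1})$, and Lemma \ref{relf_k} (equation \eqref{relf_kf_t}) identifies the normalizing denominator $\xi_\ell f_\ell(M_n,Z_n)$ hidden in $\gamma_{\ell,(t_u)}$; the two combine to produce the ratio $f_\ell(M_{n+1},Z_{n+1})/f_\ell(M_n,Z_n)$ that $\Q_\ell$ demands. One then verifies separately that, after this extension, the conditional law of the generation-$(n+1)$ types is again $\gamma_{\ell,(t_u)_{u\in z_{n+1}}}$, closing the induction.

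I expect the main obstacle to be the careful separation of the two distinct appearances of $f_\ell$: the factor $f_\ell(M_{n+1},Z_{n+1})$ in the \emph{denominator} of $\gamma_{\ell,(t_u)_{u\in z_{n+1}}}$ (governing the \emph{next} type choice) versus the factor $f_\ell(M_{n+1},Z_{n+1})$ that must emerge in the \emph{numerator} of the Radon--Nikodym ratio. Keeping track of which masses $m_u$ are being used at each stage is delicate, because the masses at generation $n+1$ are determined by \eqref{valmass} from the full configuration of generation $n$ (including childless nodes), so the bookkeeping must respect the mass-transmission rule. The clean way to organize this is to condition first on $r_{n+1}^*(\tau^*)$, show the marginal laws of $r_{n+1}^*$ agree using the two lemmas, and only afterwards read off the conditional type law; this decoupling avoids circularity between the type-selection weight $f_\ell(M_{n+1},Z_{n+1})$ in $\gamma$ and the martingale weight.
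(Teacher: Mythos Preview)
Your proposal is correct and follows the same induction as the paper, using Lemmas~\ref{relf_k} and~\ref{relf_k_n+1} to collapse the sum over generation-$n$ types into the martingale ratio $f_\ell(M_{n+1},Z_{n+1})/f_\ell(M_n,Z_n)$. One simplification over what you wrote: you need not carry the type distribution in the induction hypothesis (under $\Q_\ell$ there are no types, and on the $\tau_\ell$ side the law of the generation-$n$ types given $r^*_n$ is $\gamma_{\ell,(t_u)_{u\in z_n}}$ \emph{by construction} in Definition~\ref{def_tauk}), and the $\gamma$ at level $n{+}1$ plays no role in computing $P(r^*_{n+1}(\tau_\ell)=\cdot)$ since those types are assigned only after $r^*_{n+1}$ is fixed---so the obstacle you anticipate does not arise.
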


\begin{proof}
To prove the theorem, it suffices to prove that for all $n\in\N$, and all $\bt^*\in \mathbb T^*$:
	\begin{equation}\label{probunderQ_krneq}
		\Q_\ell(r^{*}_{n}\left(\tau^{*})=r^{*}_{n}(\bt^{*})\right)=P(r^{*}_{n}(\tau_{\ell})=r^{*}_{n}(\bt^{*}))
	\end{equation}
	where $\tau^*$ is the canonical random variable on $\T^*$ and where we set $\tau_\ell$ instead of $\tau_\ell(\bp,\bq)$.
First, note that:
\begin{align}\label{toto}
		\Q_\ell(r^{*}_{n}(\tau^{*})=r^{*}_{n}(\bt^{*}))&=\E\left[\indic_{r^{*}_{n}(\tau^{*})=r^{*}_{n}(\bt^{*})}f_\ell(M_n(\tau^*),Z_n(\tau^*)\right]\nonumber\\
		&=f_\ell(M_n(\bt^*),Z_n(\bt^*))\p(r^{*}_{n}(\tau^{*})=r^{*}_{n}(\bt^{*})).
\end{align}

We prove \eqref{probunderQ_krneq} by induction on $n$.

For $n=0$, \eqref{probunderQ_krneq} is true as, for every tree $\bt^*$ (and hence also for any random tree), $r_0^*(\bt^*)$ is the tree reduced to the root with mark 0.

	Assume that \eqref{probunderQ_krneq} is true for $n\in\N$. Using the recursive definition of $\tau_\ell$, we have: 
\begin{multline*}
			P(r^{*}_{n+1}(\tau_{\ell})=r^{*}_{n+1}(\bt^{*}))\\
			=P(r^{*}_{n}(\tau_{\ell})=r^{*}_{n}(t^{*}))\prod_{u\in z_n(\bt^*)}p_0(k_u(\bt),\eta_u(\bt^*))\\
			\times\sum_{\sum_{u\in z_n(\bt^*)}t_u=\ell}\gamma_{\ell,(t_u)_{u\in z_n(\bt^*)}}\prod_{u\in z_n(\bt^*)}\frac{f_{t_u}(m_u(\bt^*)+\eta_u(\bt^*),k_u(\bt))}{f_{t_u}(m_u(\bt^*),1)}\cdot
\end{multline*}
Then, the induction assumption as well as the definition of the probabilities $\gamma$ give 
\begin{multline*}
P(r^{*}_{n+1}(\tau_{\ell})=r^{*}_{n+1}(\bt^{*}))\\
=\Q_\ell(r^{*}_{n}(\tau^{*})=r^{*}_{n}(\bt^{*}))\prod_{u\in z_n(\bt^*)}p_0\left(k_u(\bt),\eta_u(\bt^*)\right)\\
			\times\sum_{\sum_{u\in z_n(\bt^*)}t_u=\ell}\binom{\ell}{t_u,u\in z_n(\bt^*)}\frac{\prod_{u\in z_n(\bt^*)}f_{t_u}(m_u(\bt^*),1)\xi_{t_u}}{\xi_\ell f_\ell\left(M_n(\bt^*),Z_n(\bt^*)\right)}\\
			\times\prod_{u\in z_n(\bt^*)}\frac{f_{t_u}(m_u(\bt^*)+\eta_u(\bt^*)),k_u(\bt^*))}{f_{t_u}(m_u(\bt^*),1)}\cdot
\end{multline*}
	Using Formula \eqref{toto} yields
\begin{multline*}
P(r^{*}_{n+1}(\tau_{\ell})=r^{*}_{n+1}(\bt^{*}))\\
=\frac{1}{\xi_\ell}\p(r_n^*(\tau^*)=r_n^*(\bt^*))\prod_{u\in z_n(\bt^*)}p_0\left(k_u(\bt),\eta_u(\bt^*)\right)\\
				\times\sum_{\sum_{u\in z_n(\bt^*)}t_u=\ell}\binom{\ell}{t_u,u\in z_n(\bt^*)}\prod_{u\in z_n(\bt^*)}f_{t_u}(m_u(\bt^*)+\eta(\bt^*),k_u(\bt))\xi_{t_u}.
\end{multline*}
Finally, Equation \eqref{def_p} and Lemma \ref{relf_k_n+1} and then Equation \eqref{toto} give 
\begin{align*}
P(r^{*}_{n+1}(\tau_{\ell})=r^{*}_{n+1}(\bt^{*})) & =\p(r^{*}_{n+1}(\tau^{*})=r^{*}_{n+1}(\bt^{*}))f_\ell(M_{n+1}(\bt^*),Z_{n+1}(\bt^*))\\
&=\Q_\ell(r^{*}_{n+1}(\tau^{*})=r^{*}_{n+1}(\bt^{*})),
\end{align*}
	which gives the result for $n+1$.
\end{proof}

\begin{rqqq}
Note that under $\mathbb Q_\ell$, the extinction probability of the population is equal to 1. Indeed, using the definition of $\mathbb Q_\ell$, the Lebesgue’s dominated convergence theorem and the fact that $Z_{\infty}=0$ $\p$-a.s.:
\begin{align*}
\lim_{n\rightarrow+\infty}\mathbb Q_{\ell}(Z_n=0)=\lim_{n\rightarrow+\infty}\mathbb \E[\mathds{1}_{Z_n=0}f_\ell(M_n,0)]\\
=\frac{1}{\xi_\ell}\lim_{n\rightarrow+\infty}\mathbb \E[\mathds{1}_{Z_n=0}M_n^\ell]=\frac{\E[M^\ell_{\infty}]}{\xi_\ell}=1.
\end{align*}  
Moreover under $\mathbb Q_\ell$, one can notice that for all $n\in\mathbb N^*$, $Z_n$ does not necessary admit a first moment. Just assume that $Z_1$ (and so $Z_n$) does not admit a moment of order $\ell+1$ under $\p$, as:
\begin{align*}
\xi_\ell\E^{\mathbb Q_\ell}[Z_n]&=\xi_\ell\E[Z_nf(M_n,Z_n)]\\
&\ge \E\left[Z_n\sum_{\underset{t_i\le1,\forall i}{t_1+\ldots+t_{Z_n}=\ell}}\binom{\ell}{t_1\cdots t_{Z_n}}\prod_{i=1}^{Z_n}\xi_{t_i}\right]=\ell!\xi_1^\ell\E\left[Z_n\binom{Z_n}{Z_n-\ell}\right].
\end{align*}
\end{rqqq}

\subsection{The super-critical case $\mu > 1$}

Let us begin with giving an equivalent of $\E[M_p^\ell]$ as $p\to +\infty$.

\begin{prop}\label{expM_n^k}
Let $\ell\in\N^*$.	Let $\bp$ be an offspring distribution which admits a moment of order $\ell\in\N^*$ and such that $\mu>1$, and let $\bq$ be a mark function satisfying \eqref{condq}. Then, there exists a strictly positive constant $\omega_\ell$ such that:
	\begin{equation}\label{espMn^k}
		\E[M_p^\ell]\underset{p\to\infty}{\sim}\mu^{\ell p}\omega_\ell.
	\end{equation}
\end{prop}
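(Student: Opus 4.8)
The plan is to establish the asymptotics $\E[M_p^\ell]\sim \mu^{\ell p}\omega_\ell$ by induction on $\ell$, exploiting the recursion \eqref{expM_p^k} which, taking $n=1$ and expectations, reads
\[
\E[M_{p+1}^\ell]=\sum_{j=0}^{\ell}\binom{\ell}{j}\E\left[M_1\indic_{j\ge 1}\sum_{t_1+\cdots+t_{Z_1}=j}\binom{j}{t_1\cdots t_{Z_1}}\prod_{i=1}^{Z_1}\E[M_p^{t_i}]\right]+\E\left[\sum_{t_1+\cdots+t_{Z_1}=\ell}\binom{\ell}{t_1\cdots t_{Z_1}}\prod_{i=1}^{Z_1}\E[M_p^{t_i}]\right].
\]
The base case $\ell=1$ should be handled directly: since $M_1\in\{0,1\}$ one gets $\E[M_{p+1}]=\E[M_1]+\mu\,\E[M_p]$, a linear recursion whose solution behaves like $\mu^p\omega_1$ with $\omega_1>0$ by assumption \eqref{condq}, the strict positivity coming from $\E[M_1]>0$ together with $\mu>1$.

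For the inductive step I would isolate in the right-hand side the single dominant term. Writing $Z_{p+1}^{(d)}=\sum_{i=1}^{Z_1}\tilde{M}_{i,p}$, the leading contribution comes from the partitions $(t_1,\dots,t_{Z_1})$ of $\ell$ in which one index equals $\ell$ and all others vanish: such a term produces $\E\left[Z_1 \E[M_p^\ell]\,\xi_0^{Z_1-1}\right]$-type factors, i.e. a factor $\mu\,\E[M_p^\ell]$ after using $\xi_0=1$ and $\E[Z_1]=\mu$. All remaining partitions have every $t_i\le \ell-1$, so by the induction hypothesis each factor $\E[M_p^{t_i}]$ grows like $\mu^{t_i p}$, and since $\sum_i t_i\le \ell$ with at least two nonzero parts, the product grows at most like $\mu^{\ell p}$ but with a strictly smaller exponential rate unless the parts again sum to exactly $\ell$. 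The key combinatorial point is that \emph{every} partition summing to $\ell$ contributes at order $\mu^{\ell p}$, so I would collect all of them and not only the dominant one.

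Concretely, I would show that $a_p:=\E[M_p^\ell]$ satisfies $a_{p+1}=\mu\,a_p+ c_p$ where $c_p\sim C\mu^{\ell p}$ for an explicit constant $C$ built from the $\omega_{t_i}$ (via the partitions of $\ell$ with all parts $\le \ell-1$) together with the order-$\ell$ moments of $Z_1$ and the mark contributions through $M_1$. Dividing by $\mu^{\ell(p+1)}$ and setting $b_p:=a_p/\mu^{\ell p}$ turns this into $b_{p+1}=\mu^{1-\ell}b_p+c_p/\mu^{\ell(p+1)}$; since $\ell\ge 2$ the coefficient $\mu^{1-\ell}<1$ and the inhomogeneous term converges, so $b_p$ converges to a finite limit $\omega_\ell=C/(\mu^{\ell}-\mu)\cdot\mu$ (the precise closed form following from the fixed-point equation). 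Strict positivity of $\omega_\ell$ follows because $C>0$: at least the partition with two equal halves (or any admissible decomposition) contributes a strictly positive term, using $\omega_j>0$ for $j<\ell$ by induction and $\E[M_1]>0$. The finiteness of all the expectations appearing is guaranteed by Lemma \ref{propmomordk}, which ensures $Z_1$ and each $M_p$ have moments of order $\ell$.

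The main obstacle I anticipate is the bookkeeping needed to prove that the subdominant partitions genuinely contribute only at order $\mu^{\ell p}$ and not larger, and that interchanging the $p\to\infty$ limit with the expectation over $Z_1$ is justified. For the latter I would dominate the summand uniformly: each $\E[M_p^{t_i}]\le \mu^{t_i p}$ times a bounded prefactor (provable from the induction hypothesis after fixing $p$ large), so that $\sum_{t_1+\cdots+t_{Z_1}=\ell}\prod_i \E[M_p^{t_i}]\le \mu^{\ell p}\,(\text{polynomial in }Z_1)$, which is integrable by Lemma \ref{propmomordk}; dominated convergence then legitimizes passing $\E[\,\cdot\,|Z_1]$ through the limit. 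The careful part is keeping the combinatorial constant $C$ finite and strictly positive simultaneously, which amounts to recognizing that $C$ equals the value at the fixed point of the same multinomial sum that defines $f_\ell$, evaluated with the $\omega$-constants in place of the $\xi$-constants.
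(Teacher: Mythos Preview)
Your proposal is correct and follows essentially the same route as the paper. Both proofs proceed by induction on $\ell$, isolate from \eqref{expM_p^k} (with $n=1$) the dominant term $\mu\,\E[M_p^\ell]$ coming from partitions with a single part equal to $\ell$, and show that the remainder $c_p=\E[M_{p+1}^\ell]-\mu\,\E[M_p^\ell]$ satisfies $c_p\sim C\,\mu^{\ell p}$ by dominated convergence, the dominating function being (a constant times) $Z_1^\ell$; the paper then sums the telescoped series $\E[M_p^\ell]=\mu^p\sum_{m<p}c_m/\mu^{m+1}$ directly, whereas you recast the same computation as the contractive recursion $b_{p+1}=\mu^{1-\ell}b_p+c_p/\mu^{\ell(p+1)}$, which is an equivalent way to extract the limit. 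One small slip: the fixed point of your recursion is $\omega_\ell=C/(\mu^{\ell}-\mu)$, without the extra factor $\mu$ you wrote.
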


\begin{proof}
We prove this proposition by induction on $\ell$.

We have for all $p,n\in \N^*$, using \eqref{expM_p}:
\begin{equation}\label{expW_n}
	\E\left[M_{n+p}|\cF_n\right]=M_n+Z_n\E[M_{p}],
\end{equation}
and an induction reasoning gives
	\begin{equation}\label{espMn}
		\E[M_p]=\left\{
		\begin{array}{ll}
			\E[M_1]\frac{1-\mu^p}{1-\mu},& \text{ if } \mu\neq1,\\
			p\E[M_1],&\text{ otherwise.}
		\end{array}
		\right. 
	\end{equation}
We deduce that \eqref{espMn^k} is true for $\ell=1$,
with $\omega_1:=\frac{\E[M_1]}{\mu-1}$ when $\mu> 1$. 

Now, assume that \eqref{espMn^k} is true for all $1\le i<\ell$ for some $\ell>1$, in other words:
\begin{equation}\label{limit1}
	\frac{\E\left[M_p^i\right]}{\mu^{pi}}= 
	\omega_i(1+o(1)).
	\end{equation}
In order to obtain the result at rank $\ell$, we write:
\begin{equation*}
	\E[M_{p}^\ell]=\mu^{p}\sum^{p-1}_{m=0}\left(\frac{\E[M_{m+1}^\ell]}{\mu^{m+1}}-\frac{\E[M_{m}^\ell]}{\mu^{m}}\right)=:\mu^{p}\sum_{m=0}^{p-1}\frac{\Delta_m}{\mu^{m+1}},
\end{equation*}
and we study the telescopic series with general term $\frac{\Delta_{m}}{\mu^{m+1}}$. 
To begin with, taking the expectation in \eqref{expM_p^k} with $n=1$ and $p=m$ (and using again that $M_1^j=M_1$ for every $j\ge 1$), we obtain by isolating the case $j=\ell$:
\begin{align}\label{expexpM_n^k}
	\frac{\Delta_m}{\mu^{m\ell}}=&\sum_{j=0}^{\ell-1}\binom{\ell}{j}\E\left[\frac{M_1}{\mu^{m(\ell-j)}}\sum_{t_1+\dots+t_{Z_1}=j}\binom{j}{t_1\cdots t_{Z_1}}\prod_{i=1}^{Z_1}\frac{\E[M_m^{t_i}]}{\mu^{mt_i}}\right]\nonumber\\
	&+\E\left[\sum_{\underset{t_i<\ell,\forall i}{t_1+\cdots+ t_{Z_1}}=\ell}\binom{\ell}{t_1\cdots t_{Z_1}}\prod_{i=1}^{Z_1}\frac{\E[M_m^{t_i}]}{\mu^{mt_i}}\right]\nonumber\\
	&:=\sum_{j=0}^{\ell-1}\binom{m}{j}\E[A_{j,m}]+\E[B_m]
\end{align}
with $B_m=0$ if $Z_1\le 1$.

Note that in $B_m$, each term of the sum has at most $\ell$ strictly positive $t_i$. Thus, setting $\omega_0=1$ and $\varpi_\ell:=\max_{0\leq j <\ell}\omega_j$ and using \eqref{limit1}  for $m$ large enough:
\begin{align}\label{upperbound}
	0\le B_m\leq \left(\frac{3}{2}\right)^\ell\varpi_\ell^\ell \sum_{\underset{t_i<\ell,\forall i}{t_1+\dots+t_{Z_1}=\ell}}\binom{\ell}{t_1\cdots t_{Z_1}}\leq\left(\frac{3}{2}\varpi_\ell Z_1\right)^\ell\in L^1.
\end{align}
Using \eqref{limit1} and \eqref{upperbound}, Lebesgue's dominated convergence theorem ensures that:
\begin{align*}
	\lim_{m\rightarrow+\infty}\E[B_m]=\E\left[\lim_{m\rightarrow+\infty}B_m\right]=\E\left[\sum_{\underset{t_i<\ell,\forall i}{t_1+\cdots+t_{Z_1}=\ell}}\binom{\ell}{t_1\cdots t_{Z_1}}\prod_{i=1}^{Z_1}\omega_{t_i}\right]=:\tilde c_\ell.
\end{align*}
A similar reasoning gives that:
\begin{multline*}
	\lim_{m\rightarrow+\infty}\sum_{j=0}^{\ell-1}\binom{\ell}{j}\E[A_{j,m}]\\
	=\lim_{m\rightarrow+\infty}\sum_{j=0}^{\ell-1}\binom{\ell}{j}\frac{1}{\mu^{m(\ell-j)}}\E\left[M_1\sum_{t_1+\ldots+t_{Z_1}=j}\binom{j}{t_1\cdots t_{Z_1}}
	\prod_{i=1}^{Z_1}\omega_{t_i}\right]=0, 
\end{multline*}
implying that $\Delta_m\sim {\mu^{m\ell}}\tilde c_\ell$ when $m$ goes to infinity. As a result: 

\begin{multline*}
\E[M_{p}^\ell]=\mu^{p}\sum_{m=0}^{p-1}\frac{\Delta_m}{\mu^{m+1}}\underset{p\to\infty}{\sim}\mu^{p-1}\sum^{p-1}_{m=0}\mu^{m(\ell-1)}\tilde{c}_\ell\\
\underset{p\to\infty}{\sim}\mu^{\ell p}\tilde{c}_\ell(\mu^{\ell}-\mu)^{-1}=:\mu^{\ell p}\omega_\ell.
\end{multline*}
\end{proof}

We deduce the limit of Theorem \ref{thm:limit_polynome} in the super-critical case.

\begin{theo}\label{limmart^ksurmart}
	Let $\bp$ be an offspring distribution which admits a moment of order $\ell\in\NN$ such that $\mu>1$ and let $\bq$ be a mark function satisfying \eqref{condq}. For every $n\in\N$ and every $\Lambda_n\in\cF_n$, we have:
	\begin{equation}
		\lim_{p\to +\infty}\frac{\E\left[\indic_{\Lambda_n}M_{n+p}^\ell\right]}{\E\left[M_{n+p}^\ell\right]}=\E\left[\indic_{\Lambda_n}\frac{P_\ell(Z_n)}{\mu^{\ell n}}\right],
	\end{equation}
	where $P_\ell$ is a polynomial function of degree $\ell$.
\end{theo}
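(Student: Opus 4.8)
The plan is to mimic the structure already used in the sub-critical case (Theorem~\ref{limcondmu<1k}), conditioning on $\cF_n$ via the branching decomposition \eqref{expM_p^k}, and then to combine it with the super-critical asymptotics established in Proposition~\ref{expM_n^k}. The key identity is
\[
\E\left[M_{n+p}^\ell\mid\cF_n\right]=\sum_{j=0}^\ell\binom{\ell}{j}M_n^{\ell-j}\sum_{t_1+\cdots+t_{Z_n}=j}\binom{j}{t_1\cdots t_{Z_n}}\prod_{i=1}^{Z_n}\E[M_p^{t_i}].
\]
The idea is to divide both numerator and denominator of the ratio by $\E[M_{n+p}^\ell]\sim\mu^{\ell(n+p)}\omega_\ell$, and to track which terms survive in the limit $p\to+\infty$. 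Since $\E[M_p^{t_i}]/\mu^{pt_i}\to\omega_{t_i}$ with $\omega_0=1$, a product $\prod_{i=1}^{Z_n}\E[M_p^{t_i}]$ grows like $\mu^{p(t_1+\cdots+t_{Z_n})}=\mu^{pj}$. Hence after dividing by $\mu^{\ell p}$, only the top term $j=\ell$ contributes (the terms with $j<\ell$ carry a factor $\mu^{-p(\ell-j)}\to 0$), so that
\[
\frac{\E[M_{n+p}^\ell\mid\cF_n]}{\mu^{\ell p}}\xrightarrow[p\to+\infty]{}\sum_{t_1+\cdots+t_{Z_n}=\ell}\binom{\ell}{t_1\cdots t_{Z_n}}\prod_{i=1}^{Z_n}\omega_{t_i}=:P_\ell(Z_n).
\]
This limit defines $P_\ell$ and is clearly a polynomial in $Z_n$ of degree $\ell$, since the maximal power of $Z_n$ arises from the number of ways to assign the $\ell$ ``units'' among the $Z_n$ children, giving a top coefficient $\omega_1^\ell$ (attained when all nonzero $t_i$ equal $1$).

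First I would establish the pointwise $\cF_n$-a.s. convergence of $\E[M_{n+p}^\ell\mid\cF_n]/\mu^{\ell p}$ to $P_\ell(Z_n)$, which on each fixed tree is a finite sum over the fixed integer $Z_n$ and follows directly from \eqref{espMn^k} term by term. Then I would write
\[
\frac{\E[\indic_{\Lambda_n}M_{n+p}^\ell]}{\E[M_{n+p}^\ell]}
=\frac{\E\!\left[\indic_{\Lambda_n}\,\dfrac{\E[M_{n+p}^\ell\mid\cF_n]}{\mu^{\ell p}}\right]}{\dfrac{\E[M_{n+p}^\ell]}{\mu^{\ell p}}},
\]
observe that the denominator converges to $\mu^{\ell n}\omega_\ell$ by Proposition~\ref{expM_n^k}, and pass the limit through the numerator expectation. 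Since $P_\ell(Z_n)=\E[\indic\,Z_n]$-type identity gives $\omega_\ell=P_\ell(Z_0)=P_\ell(1)$ when $\Lambda_n=\Omega$, consistency is automatic, and the target limit $\E[\indic_{\Lambda_n}P_\ell(Z_n)/\mu^{\ell n}]$ follows after dividing by $\omega_\ell$ (the $\omega_\ell$ is absorbed into $P_\ell$ by rescaling, matching the normalization in the statement).

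The main obstacle is justifying the interchange of limit and expectation in the numerator, i.e. obtaining a dominating function for $\indic_{\Lambda_n}\E[M_{n+p}^\ell\mid\cF_n]/\mu^{\ell p}$ that is in $L^1(\p)$ uniformly in $p$. I would handle this exactly as in the bound \eqref{upperbound}: using $\E[M_p^{t_i}]\le C\,\mu^{pt_i}$ for all large $p$ (a uniform constant from \eqref{espMn^k} applied to each $i\le\ell$), each summand is bounded by a constant times $M_n^{\ell-j}\mu^{-p(\ell-j)}\binom{j}{t_1\cdots t_{Z_n}}$, whence the whole conditional expectation divided by $\mu^{\ell p}$ is dominated by $C'\,(M_n+Z_n)^\ell\le C'\,2^\ell(M_n^\ell+Z_n^\ell)$, which lies in $L^1$ by Lemma~\ref{propmomordk}. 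Dominated convergence then applies, and the remaining verification that $P_\ell$ has degree exactly $\ell$ reduces to identifying the $Z_n^\ell$-coefficient as $\omega_1^\ell>0$, which is immediate from the combinatorics of the multinomial sum.
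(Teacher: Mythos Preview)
Your proposal is correct and follows essentially the same route as the paper: both use the branching decomposition \eqref{expM_p^k}, normalize by the asymptotic $\mu^{\ell p}$ (or $\mu^{\ell(n+p)}$) coming from Proposition~\ref{expM_n^k}, observe that only the $j=\ell$ term survives, and justify the interchange of limit and expectation by a dominated-convergence bound of the form $C(M_n+Z_n)^\ell$ (the paper uses the equivalent $C\tilde Z_n^\ell$). The only place where the paper is more explicit is the verification that $z\mapsto\sum_{t_1+\cdots+t_z=\ell}\binom{\ell}{t_1\cdots t_z}\prod_i\omega_{t_i}$ is a polynomial in $z$: it groups terms by the multiset of nonzero $t_i$'s via the sets $S_{q,\ell}$ and writes the count as $\binom{z}{q}$ times a constant, whereas you assert this ``clearly'' and only identify the leading coefficient $\omega_1^\ell$; your argument would benefit from one extra sentence making that grouping explicit.
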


\begin{proof}
Let $p,n\in\N$ and note that  \eqref{expM_p^k} is equivalent to:
\begin{align*}
	\frac{\E\left[\left.M_{n+p}^\ell\right|\cF_n\right]}{\mu^{\ell (n+p)}}
	&=\sum_{j=0}^\ell\binom{\ell}{j}\frac{M_n^{\ell-j}}{\mu^{(\ell-j)p+n\ell}}{\sum_{t_1+\ldots+t_{Z_n}=j}}\binom{j}{t_1\cdots t_{Z_n}}
	\prod_{i=1}^{Z_n}\frac{\E\left[M_{p}^{t_i}\right]}{\mu^{pt_i}}\\
	&:=\sum_{j=0}^\ell\binom{\ell}{j}D_{j,p}.
\end{align*}
With the same notation and reasoning as those in the proof of Proposition \ref{expM_n^k}, for all $j\in\llbracket0,\ell\rrbracket$:
\begin{align}\label{upperbound2}
	D_{j,p}\le M_n^{\ell-j}\left(\frac{3}{2}\varpi_\ell Z_n\right)^j\le C \tilde Z_n^\ell,
\end{align}
where $C$ is a positive constant and $\tilde Z_n:={\sum_{i=0}^n}Z_i$, the total population up to generation $n$, belongs to $L^\ell$ according to Lemma \ref{propmomordk}. Remark that we use that $M_n\le \tilde Z_n$.

Thanks to Proposition \ref{expM_n^k}, almost surely:
\begin{align}\label{almostsurely}
	\lim_{p\rightarrow+\infty}\sum_{j=0}^\ell\binom{\ell}{j}D_{j,p}&=\lim_{p\rightarrow+\infty}\sum_{j=0}^\ell\binom{\ell}{j}\frac{M_n^{\ell-j}}{\mu^{(\ell-j)p+n\ell}}{\sum_{t_1+\ldots+t_{Z_n}=j}}\binom{j}{t_1\cdots t_{Z_n}}\prod_{i=1}^{Z_n}\omega_{t_i}\nonumber\\
	&=\frac{1}{\mu^{n\ell}}{\sum_{t_1+\ldots+t_{Z_n}=\ell}}\binom{\ell}{t_1\cdots t_{Z_n}}
	\prod_{i=1}^{Z_n}\omega_{t_i}.
\end{align}
Using \eqref{upperbound2}, \eqref{almostsurely} and  Proposition \ref{expM_n^k}, Lebesgue's dominated convergence theorem gives:
\begin{align*}
	\lim_{p\rightarrow+\infty}\frac{\E\left[\mathds{1}_{\Lambda_n}M_{n+p}^\ell\right]}{\E\left[M_{n+p}^\ell\right]}
	=\E\left[\mathds{1}_{\Lambda_n}\frac{1}{\mu^{n\ell}\omega_\ell}\sum_{t_1+\ldots+t_{Z_n}=\ell}\binom{\ell}{t_1\cdots t_{Z_n}}
	\prod_{i=1}^{Z_n}\omega_{t_i}\right].
\end{align*}

To prove that the sum in the right hand side of the above equation is indeed a polynomial function in $Z_n$,  let us introduce, for $1\le i\le \ell$ the set
\begin{equation}\label{expSik}
S_{q,\ell}=\left\{\bn=(n_1,\ldots ,n_q)\in(\N^*)^q, \sum_{m=1}^q n_m=\ell\right\}
\end{equation}
and for $\bn\in S_{q,\ell}$, $z\in\N^*$,
\[
A_{\bn, z}=\{(t_1,\ldots,t_z)\in \N^z,\ \{t_1,\ldots,t_z\}=\{n_1,\ldots,n_q,0\}\}.
\]
Then, we have
\begin{align*}
\sum_{t_1+\ldots+t_{Z_n}=\ell} & \binom{\ell}{t_1\cdots t_{Z_n}}
	\prod_{i=1}^{Z_n}\omega_{t_i}\\
	 & =\sum_{q=1}^\ell \sum _{\bn\in S_{q,\ell}}\sum _{(t_1,\ldots,t_{Z_n})\in A_{\bn,Z_n}}\binom{\ell}{t_1\cdots t_{Z_n}}
	\prod_{i=1}^{Z_n}\omega_{t_i}\\
	& =\sum_{q=1}^\ell\sum _{\bn\in S_{q,\ell}}\binom{\ell}{n_1\cdots n_q}\left(\prod_{i=1}^q\omega_{n_i}\right)\mathrm{Card}(A_{\bn,Z_n})
\end{align*}
and $\mathrm{Card}(A_{\bn,Z_n})$ is clearly a polynomial of degree $q$ in $Z_n$ as it can be written $\binom{Z_n}{q}b_\bn$ where $b_\bn$ is a constant depending only of $\bn$.
\end{proof}

\begin{rqqq}
The martingale $P_\ell(Z_n)/{\mu^{\ell n}}$ already appears in \cite{abraham_penalization_2020}. By uniqueness property of Proposition 4.3 of \cite{abraham_penalization_2020}, this is indeed the same martingale.
\end{rqqq}

\begin{rqqq}
The (unmarked) random tree whose distribution is given by the change of probability using the martingale $P_\ell(Z_n)/{\mu^{\ell n}}$ is already described in Definition $4.4$ of \cite{abraham_penalization_2020}. As this martingale does not depends on the marks, it is not difficult to see that the random marked tree associated (via the Girsanov transformation) with this martingale has the same mark function $\bq$ that is:
\begin{itemize}
\item The unmarked tree is those of  Definition $4.4$ of \cite{abraham_penalization_2020},
\item Conditionally on this tree, all the nodes are marked independently of each others and a node $u$ with $k_u$ offspring has mark one with probability $\bq(k_u)$.
\end{itemize}
\end{rqqq}

\subsection{The critical case $\mu=1$}

As for the super-critical case, we need an equivalent of $\E[M_p^\ell]$ as $p\to+\infty$.

\begin{prop}\label{expM_n^kmu1}
Let $\bp$ be an offspring distribution satisfying \eqref{condp}, that admits a moment of order $\ell\in\NN$ and satisfies $\mu=1$. Let $\bq$ be a mark function satisfying \eqref{condq}. Then, there exists a positive constant $\tilde{\omega}_\ell$ such that:
	\begin{equation}\label{eqMn^kmu1}
		\E[M_p^\ell]
		\underset{p\to\infty}{\sim}p^{2\ell-1}\tilde{\omega}_\ell.
	\end{equation}
\end{prop}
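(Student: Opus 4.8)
The plan is to prove \eqref{eqMn^kmu1} by induction on $\ell$, mirroring the structure of the proof of Proposition \ref{expM_n^k} but carefully tracking the fact that, in the critical case $\mu=1$, the telescoping argument produces a different growth rate. The base case $\ell=1$ follows directly from \eqref{espMn}: when $\mu=1$ we have $\E[M_p]=p\,\E[M_1]$, so \eqref{eqMn^kmu1} holds with $\tilde\omega_1=\E[M_1]$ (noting $2\ell-1=1$). For the inductive step, I would assume $\E[M_p^i]\sim p^{2i-1}\tilde\omega_i$ for all $1\le i<\ell$ and seek to determine the leading behaviour of $\E[M_p^\ell]$.

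First I would set up the same decomposition as before. Taking expectation in \eqref{expM_p^k} with $n=1$, $p=m$, and isolating the term $j=\ell$ (which reproduces $\mu\,\E[M_m^\ell]=\E[M_m^\ell]$ since $\mu=1$), I obtain an exact recursion $\Delta_m:=\E[M_{m+1}^\ell]-\E[M_m^\ell]$ expressed through lower-order moments. Concretely, $\Delta_m$ splits as in \eqref{expexpM_n^k} into the $j<\ell$ contributions (each carrying a factor $M_1$ and a product of $\E[M_m^{t_i}]$ with $\sum t_i=j<\ell$) plus the term $B_m$ collecting the partitions $t_1+\dots+t_{Z_1}=\ell$ with all $t_i<\ell$. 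The key point is that in the critical case there is no $\mu^{-m}$ decay, so both families of terms contribute to the leading order of $\Delta_m$, and I must determine the dominant power of $m$ among them.

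The heart of the matter is the asymptotic analysis of $\Delta_m$. Using the induction hypothesis, a product $\prod_{i=1}^{Z_1}\E[M_m^{t_i}]$ over a partition $(t_i)$ of some integer $j$ into $q$ nonzero parts grows like $m^{\sum_i(2t_i-1)}=m^{2j-q}$, which is maximized, for fixed $j$, by taking $q$ as small as possible. For the $j<\ell$ terms the extra factor $M_1$ contributes no power of $m$, whereas for $B_m$ one has $j=\ell$; comparing $2\ell-q$ across the two families shows that the dominant contribution comes from partitions of $\ell$ into exactly two parts (i.e. from $B_m$ with $q=2$, giving $m^{2\ell-2}$), provided $Z_1\ge 2$ with positive probability, which is guaranteed by \eqref{condp}. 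Thus $\Delta_m\sim c\,m^{2\ell-2}$ for an explicit positive constant $c$ obtained by summing the dominant partitions weighted by the $\tilde\omega_{t_i}$'s and the appropriate moments of $Z_1$; here a dominated-convergence argument analogous to \eqref{upperbound}, with $\tilde Z_1\in L^\ell$ from Lemma \ref{propmomordk} as the dominating bound, justifies interchanging limit and expectation. Finally, since $\E[M_p^\ell]=\sum_{m=0}^{p-1}\Delta_m$ and $\sum_{m=0}^{p-1}m^{2\ell-2}\sim p^{2\ell-1}/(2\ell-1)$, I recover \eqref{eqMn^kmu1} with $\tilde\omega_\ell=c/(2\ell-1)$, which is strictly positive.

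The main obstacle I anticipate is the combinatorial bookkeeping needed to identify precisely which partitions dominate and to verify that the two-part partitions of $\ell$ indeed give the strictly largest power $m^{2\ell-2}$ while all other contributions (including the $j<\ell$ terms and the partitions with $q\ge 3$ parts) are of strictly smaller order. One must also confirm positivity of the limiting constant $c$, which requires that the relevant moments of $Z_1$ and the lower constants $\tilde\omega_{t_i}$ are positive; the former follows from non-degeneracy \eqref{condp} and the latter from the induction hypothesis together with assumption \eqref{condq}. The domination step, ensuring uniform integrability of the rescaled summands, is routine but must be stated with care because, unlike in the super-critical case, there is no geometric decay to absorb the fluctuations.
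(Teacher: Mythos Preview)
Your proposal is correct and follows essentially the same route as the paper's proof: induction on $\ell$ with base case from \eqref{espMn}, telescoping $\E[M_p^\ell]=\sum_{m=0}^{p-1}\Delta_m$, identifying via the induction hypothesis that the dominant contribution to $\Delta_m$ comes from the two-part partitions of $\ell$ inside $B_m$ (giving order $m^{2\ell-2}$), applying dominated convergence with $Z_1^\ell\in L^1$, and summing. Your analysis of why the $j<\ell$ terms and the $q\ge 3$ partitions are subleading is in fact spelled out more explicitly than in the paper, which simply asserts the analogous limits vanish.
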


\begin{proof}
The proof is similar to the proof of Proposition \ref{expM_n^k}.
According to \eqref{espMn}, \eqref{eqMn^kmu1} is true for $\ell=1$ with $\tilde{\omega}_{1}:=\E[M_1]$.
Now, assume that there exists $\ell$ such that \eqref{eqMn^kmu1} is true for all $1\leq i<\ell$, in other words:
\begin{equation}\label{limit2}
	\E\left[M_p^i\right]= 
		p^{2i-1}\tilde{\omega}_i(1+o(1)).
\end{equation}
In order to obtain the equivalent, we write:
\begin{equation*}
	\E\left[M_p^\ell\right]=\sum^{p-1}_{i=0}\left(\E[M_{i+1}^\ell]-\E[M_{i}^\ell]\right)=\sum_{i=0}^{p-1}\Delta_i,
\end{equation*}
and we study the telescopic series with general term $\Delta_i$. 
According to Proposition \ref{expM_n^k}, more precisely \eqref{expexpM_n^k}, we obtain:
\begin{align*}
	\frac{\Delta_m}{m^{2(\ell-1)}}
	&=\sum_{j=0}^{\ell-1}\frac{\E\left[A_{j,m}\right]}{m^{2(\ell-1)}}+\frac{\E\left[B_{m}\right]}{m^{2(\ell-1)}},
\end{align*}
with $\Delta_m$, $A_{j,m}$ and $B_m$ defined in the proof of Proposition \ref{expM_n^k} with $\mu=1$ in our case.

Let $z>1$. We set $\tilde\omega_0=1$ and $\tilde{\varpi}_\ell:=\max_{0\leq j < \ell}\tilde{\omega}_j$ and, for $t=(t_1,\ldots,t_z)$ such that $\forall i,\ 0\le t_i<\ell$ and $\sum _{i=1}^zt_i=\ell$, we set  $N(t):={\sum_{i=1}^{z}\mathds{1}_{t_i>0}}$ the number of positive $t_i$.
Thus, noting that $N(t)\ge 2$ for every term of the sum in $B_m$, and using \eqref{limit2} for $m$ large enough:
\begin{align}\label{ineq}
	0\leq \frac{B_m}{m^{2(\ell-1)}} &=\sum_{\underset{t_i<\ell,\forall i}{t_1+\ldots+t_{Z_1}=\ell}}\binom
		{\ell}
		{t_1\cdots t_{Z_1}}
\frac{m^{2\ell-N(t)}}{m^{2(\ell-1)}}\prod^{Z_1}_{i=1}\tilde{\omega}_{t_i}(1+o(1))\nonumber\\ 
&\leq
 \left(\frac{3}{2}\tilde{\varpi}_m\right)^\ell\sum_{t_1+\ldots+t_{Z_1}=\ell}\binom{\ell}{t_1\cdots t_{Z_1}}
\leq \left(\frac{3}{2}\tilde{\varpi}_m Z_1\right)^\ell\in \mathrm{L}^1.
\end{align}
Note that the only terms in $B_m$ that do not tend to 0 a.s. is when $N(t)=2$. Then using \eqref{limit2} and \eqref{ineq}, Lebesgue's dominated convergence theorem ensures that:
\begin{multline}\label{limitB_p2}
	\lim_{m\rightarrow+\infty}\frac{\E[B_m]}{m^{2(\ell-1)}}=\E\left[\lim_{m\rightarrow+\infty}\frac{B_m}{m^{2(\ell-1)}}\right]\\
	=\E\left[\frac{Z_1(Z_1-1)}{2}\right]\sum_{\underset{t_1,t_2>0}{t_1+t_{2}=\ell}}\binom{\ell}{t_1~ t_2}\tilde{\omega}_{t_1}\tilde{\omega}_{t_2}=:\tilde d_\ell.
\end{multline}
A similar reasoning gives that $\lim_{m\rightarrow+\infty}\frac{\E[A_{j,m}]}{m^{2(\ell-1)}}=0$ and we can conclude with \eqref{limitB_p2} that $\Delta_m\sim {m^{2(\ell-1)}}\tilde d_\ell$ when $m$ goes to infinity. As a result: 
\begin{equation*}
		\E\left[M_p^\ell\right]=\sum^{p-1}_{i=0}\Delta_i\underset{p\to+\infty}{\sim} \tilde{d}_\ell\sum^{p-1}_{i=0}i^{2(\ell-1)}.
\end{equation*}
We then use the following property
\begin{equation*}
	\sum^{p-1}_{i=0}i^m=\frac{1}{m+1}\sum_{i=0}^m(-1)^i\binom{m+1}{i}\mathscr B_i(p-1)^{m+1-i}\underset{p\to +\infty}{\sim}\frac{p^{m+1}}{m+1},
\end{equation*}
where $\mathscr B_j$ are Bernoulli numbers, see \cite{knuth_johann_1993}, and we get
\[
\E\left[M_p^\ell\right]\underset{p\to+\infty}{\sim}\frac{p^{2\ell-1}}{2\ell-1}\tilde{d}_\ell=:p^{2\ell-1}\tilde{\omega}_\ell.
\]
\end{proof}

We deduce Theorem \ref{thm:limit_polynome} for the case $\mu=1$:
\begin{theo}\label{limmart^kmart}
	Let $\bp$ be an offspring distribution satisfying \eqref{condp} that admits a moment of order $\ell\in\NN$ and such that $\mu=1$ and let $\bq$ be a mark function satisfying \eqref{condq}. For every $n\in\N$ and for every $\Lambda_n\in\cF_n$, we have:
	\begin{equation}\label{limitmart^kmu1}
		\lim_{p\to+\infty}\frac{\E\left[\indic_{\Lambda_n}M_{n+p}^\ell\right]}{\E\left[M_{n+p}^\ell\right]}=\E[\indic_{\Lambda_n}Z_n].
	\end{equation}
\end{theo}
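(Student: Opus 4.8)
The plan is to mirror the proof of Theorem~\ref{limmart^ksurmart}, replacing the geometric normalisation $\mu^{\ell p}$ used there by the polynomial equivalent $\E[M_p^\ell]\sim p^{2\ell-1}\tilde\omega_\ell$ furnished by Proposition~\ref{expM_n^kmu1}. Starting from the identity $\E[\indic_{\Lambda_n}M_{n+p}^\ell]=\E\big[\indic_{\Lambda_n}\E[M_{n+p}^\ell\mid\cF_n]\big]$ together with the conditional expression \eqref{expM_p^k}, I would first determine the almost sure limit of a suitably normalised $\E[M_{n+p}^\ell\mid\cF_n]$, and then upgrade this to convergence of the full ratio by Lebesgue's dominated convergence theorem.

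For the pointwise limit I would carry out a power-counting argument on \eqref{expM_p^k}. By Proposition~\ref{expM_n^kmu1} each factor obeys $\E[M_p^{t_i}]\sim p^{2t_i-1}\tilde\omega_{t_i}$ when $t_i\ge1$ and equals $1$ when $t_i=0$; hence a composition $(t_1,\dots,t_{Z_n})$ of $j$ with exactly $N$ positive parts contributes a factor of order $p^{2j-N}$. Since the positive parts are integers summing to $j$ one has $N\le j\le\ell$, so $2j-N\le2\ell-1$, with equality if and only if $j=\ell$ and $N=1$, that is exactly one $t_i$ equals $\ell$ and all the others vanish. Dividing by $p^{2\ell-1}\tilde\omega_\ell$, every term therefore tends to $0$ except this single dominant one, for which $M_n^{\ell-j}=1$, the multinomial coefficient is $1$, and there are $Z_n$ admissible positions; this gives
\[
\frac{\E[M_{n+p}^\ell\mid\cF_n]}{p^{2\ell-1}\tilde\omega_\ell}\xrightarrow[p\to+\infty]{}Z_n\qquad\text{a.s.}
\]
Since $\E[M_{n+p}^\ell]\sim(n+p)^{2\ell-1}\tilde\omega_\ell\sim p^{2\ell-1}\tilde\omega_\ell$ for fixed $n$, the same a.s.\ limit $Z_n$ holds when $\E[M_{n+p}^\ell]$ stands in the denominator.

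The delicate step, and the one I expect to be the main obstacle, is a domination uniform in $p$ allowing the interchange of limit and expectation. Bounding $\E[M_p^{t}]\le C\,p^{2t-1}$ for $t\ge1$ and all $p$ (legitimate since the equivalents hold and only finitely many small values of $p$ need adjusting the constant) and using $\E[M_p^0]=1$, a composition with $N$ positive parts yields $\prod_i\E[M_p^{t_i}]\le C^\ell p^{2j-N}\le C^\ell p^{2\ell-1}$. Combining this with the identity $\sum_{t_1+\cdots+t_{Z_n}=j}\binom{j}{t_1\cdots t_{Z_n}}=Z_n^{\,j}$ and the binomial theorem gives
\[
\frac{\E[M_{n+p}^\ell\mid\cF_n]}{p^{2\ell-1}}\le C^\ell\sum_{j=0}^\ell\binom{\ell}{j}M_n^{\ell-j}Z_n^{\,j}=C^\ell (M_n+Z_n)^\ell\le (2C)^\ell\,\tilde Z_n^{\,\ell},
\]
where $M_n\le\tilde Z_n$ and $Z_n\le\tilde Z_n$ with $\tilde Z_n:=\sum_{i=0}^n Z_i$. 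As $\tilde Z_n\in L^\ell$ by Lemma~\ref{propmomordk} and $\E[M_{n+p}^\ell]\ge c\,p^{2\ell-1}$ for $p$ large, the ratio $\E[M_{n+p}^\ell\mid\cF_n]/\E[M_{n+p}^\ell]$ is bounded by a fixed $L^1$ random variable independent of $p$. Dominated convergence then yields
\[
\lim_{p\to+\infty}\frac{\E[\indic_{\Lambda_n}M_{n+p}^\ell]}{\E[M_{n+p}^\ell]}=\E\Big[\indic_{\Lambda_n}\lim_{p\to+\infty}\frac{\E[M_{n+p}^\ell\mid\cF_n]}{\E[M_{n+p}^\ell]}\Big]=\E[\indic_{\Lambda_n}Z_n],
\]
which is the announced identity. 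The only genuinely subtle bookkeeping is isolating the unique dominant composition $j=\ell,\ N=1$ while checking that all remaining contributions are both $o(p^{2\ell-1})$ pointwise and $L^1$-dominated uniformly in $p$.
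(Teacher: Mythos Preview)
Your proposal is correct and follows essentially the same route as the paper: start from the conditional expansion \eqref{expM_p^k}, normalise by $p^{2\ell-1}$, use the power count $2j-N(t)\le 2\ell-1$ with equality only for $j=\ell$, $N(t)=1$ to isolate the surviving term $Z_n\tilde\omega_\ell$, and close with a domination by a constant multiple of $\tilde Z_n^{\,\ell}\in L^1$ (Lemma~\ref{propmomordk}) to apply dominated convergence. Your domination via $\sum_t\binom{j}{t_1\cdots t_{Z_n}}=Z_n^{\,j}$ and the binomial identity $(M_n+Z_n)^\ell$ is in fact a cleaner write-up than the paper's one-line reference to \eqref{upperbound2}, but the underlying argument is the same.
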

\begin{proof}Let $p,n\in\N$, here we write: 
\begin{align*}
	&\frac{\E[M_{n+p}^\ell|\cF_n]}{p^{2\ell-1}}\\
	&=\sum_{j=0}^\ell\binom{\ell}{j}M_n^{\ell-j}{\sum_{t_1+\ldots+t_{Z_n}=j}}\binom{j}{t_1\cdots t_{Z_n}}\frac{1}{p^{2(\ell-j)-1+N(t)}}
	\frac{\prod_{\ell=1}^{Z_n}\E\left[M_{p}^{t_{\ell}}\right]}{p^{2j-N(t)}}\\
	&=:\sum_{j=0}^\ell\binom{\ell}{j}M_n^{\ell-j}{\sum_{t_1+\ldots+t_{Z_n}=j}}\binom{j}{t_1\cdots t_{Z_n}}W_{j,N(t),p},
\end{align*}
and \eqref{upperbound2} remains true replacing $\mu^{(\ell-j)p+nj}$ by $p^{2(\ell-j)-1+N(t)}$ giving us an upper bound independent of $p$.\\
 Note that $W_{j,N(t),p}$ tends to 0 a.s. except for $j=\ell$ and $N(t)=1$ implying that, using \eqref{eqMn^kmu1}:
 \begin{align*}
 \lim_{p\rightarrow+\infty}\frac{\E\left[\left.M_{n+p}^\ell\right|\cF_n\right]}{p^{2\ell-1}}= \lim_{p\rightarrow+\infty}\frac{Z_n\E\left[M_{p}^\ell\right]}{p^{2\ell-1}}=Z_n\tilde\omega_\ell.
 \end{align*}
 To conclude, as in Theorem \ref{limmart^ksurmart}, we use \eqref{eqMn^kmu1} again and Lebesgue’s dominated convergence theorem to obtain \eqref{limitmart^kmu1}.
\end{proof}

\begin{rqqq}
As for the super-critical case, the martingale is a known one and the associated random marked tree is a size-biased tree (i.e. Kesten's tree, see \cite{kesten_subdiffusive_1986}) associated with the offspring distribution $\bp$ and  with the mark function $\bq$ being unchanged.
\end{rqqq}

\section{Exponential penalization}\label{sec:expo}
We now consider a penalization of the form $s^{M_n}t^{Z_n}$ with $s\in(0,1)$, $t\in[0,1]$.
The case $s=1$ has already beed studied in \cite{abraham_penalization_2020}.

\medskip
To begin with, let us set for $p\ge 1$, $s,t\in[0,1]$, $\varphi_p(s,t)=\E[s^{M_p}t^{Z_p}]$. For $s\in (0,1)$ fixed, we consider the function $f_s$ defined for $t\in [0,1]$ by
\[
f_s(t)=\varphi_1(s,t)=\sum_{k=0}^{+\infty}\bp(k)t^k(s\bq(k)+1-\bq(k)).
\]
As $f_s$ is $\mathscr{C}^\infty$ on $[0,1)$, increasing, convex and satisfies $0\le f_s\le 1$, $f_s(1)<1$, the function $f_s$ admits a unique fixed point denoted $\kappa(s)$ on $[0,1]$. Furthermore, using the notation $f^p=f\circ\ldots\circ f$ for the $p$-th composition of $f$, we have $\varphi_p(s,t)=f^p_s(t)$
and
\begin{equation}\label{limespsMptZp}
\forall t\in[0,1],\,\lim_{p\rightarrow+\infty}\E[s^{M_p}t^{Z_p}]=\lim_{p\rightarrow+\infty}f_s^p(t)=\kappa(s).
\end{equation}

 Note that $\kappa(s)=0$ if and only if $\bp(0)=0$. Therefore, we must study separately the two cases $\bp(0)>0$ and $\bp(0)=0$.

\subsection{Case $\bp(0)>0$}

\begin{theo}\label{pensMntZn1}
	\begin{enumerate}
	\item Let $\bp$ be an offspring distribution satisfying \eqref{condp} and such that $\bp(0)>0$, and let $\bq$ be a mark function satisfying \eqref{condq}. 
For every $t\in[0,1]$ and $s\in(0,1)$, for every $n\in\N$, $\Lambda_n\in\cF_n$, we have:
	\begin{equation}
		\frac{\E\left[\indic_{\Lambda_n}s^{M_{n+p}}t^{Z_{n+p}}\right]}{\E\left[s^{M_{n+p}}t^{Z_{n+p}}\right]}\underset{p\to +\infty}{\longrightarrow}\E[\indic_{\Lambda_n}s^{M_n}\kappa(s)^{Z_n-1}].
	\end{equation}
\item The probability measure $\Q_{s,1}$ defined on $(\T^*,\cF_\infty)$ by
\[\forall n\in\mathbb N, \Lambda_n\in\cF_n,\quad\Q_{s,1}(\Lambda_n)=\E\left[\indic_{\Lambda_n}s^{M_n}\kappa(s)^{Z_n-1}\right],\]
is the distribution of a MGW tree with reproduction-marking law $p_{s,0}$ defined by : 
\begin{equation}\label{dfps0}
\forall k\in\mathbb N,\ \forall \eta\in\lbrace0,1\rbrace,\quad  p_{s,0}(k,\eta)=\bp_0(k,\eta)s^\eta\kappa(s)^{k-1}
\end{equation}
where $\bp_0(k,\eta)$ is defined by \eqref{def_p0}.
\end{enumerate}
\end{theo}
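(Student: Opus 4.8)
The plan is to treat the two assertions in turn, the first by a conditioning argument based on the branching property and the second by matching cylinder probabilities.

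\emph{Part 1.} First I would condition on $\cF_n$ and use the branching property exactly as in \eqref{expM_p}, but keeping track of $Z_{n+p}$ as well: conditionally on $\cF_n$, the pair $(M_{n+p},Z_{n+p})$ has the same law as $\bigl(M_n+\sum_{i=1}^{Z_n}\tilde M_{i,p},\ \sum_{i=1}^{Z_n}\tilde Z_{i,p}\bigr)$, where the $(\tilde M_{i,p},\tilde Z_{i,p})$ are i.i.d. copies of $(M_p,Z_p)$ independent of $\cF_n$. Since $M_n$ and $Z_n$ are $\cF_n$-measurable, this gives
\[
\E\bigl[s^{M_{n+p}}t^{Z_{n+p}}\mid\cF_n\bigr]=s^{M_n}\,\varphi_p(s,t)^{Z_n}=s^{M_n}\bigl(f_s^p(t)\bigr)^{Z_n},
\]
hence $\E[\indic_{\Lambda_n}s^{M_{n+p}}t^{Z_{n+p}}]=\E[\indic_{\Lambda_n}s^{M_n}(f_s^p(t))^{Z_n}]$ for every $\Lambda_n\in\cF_n$, and the same identity for the denominator by taking $\Lambda_n=\T^*$.

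Next I would pass to the limit $p\to+\infty$. By \eqref{limespsMptZp} we have $f_s^p(t)\to\kappa(s)$, and since $0\le s^{M_n}(f_s^p(t))^{Z_n}\le 1$ (with value $1$ on $\{Z_n=0\}$), dominated convergence yields
\[
\E[\indic_{\Lambda_n}s^{M_{n+p}}t^{Z_{n+p}}]\underset{p\to+\infty}{\longrightarrow}\E[\indic_{\Lambda_n}s^{M_n}\kappa(s)^{Z_n}].
\]
The crucial point is that the limiting denominator is nonzero: taking $\Lambda_n=\T^*$ and using $\varphi_n(s,t)=f_s^n(t)$ together with the fixed-point relation $f_s(\kappa(s))=\kappa(s)$, I get $\E[s^{M_n}\kappa(s)^{Z_n}]=\varphi_n(s,\kappa(s))=f_s^n(\kappa(s))=\kappa(s)$, which is strictly positive precisely because $\bp(0)>0$ forces $\kappa(s)>0$. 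Dividing the two limits and pulling the constant $\kappa(s)^{-1}$ inside the expectation gives the announced martingale $\E[\indic_{\Lambda_n}s^{M_n}\kappa(s)^{Z_n-1}]$.

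\emph{Part 2.} For the identification of $\Q_{s,1}$, I would first check that $p_{s,0}$ defined by \eqref{dfps0} is a genuine probability distribution on $\N\times\{0,1\}$: summing over $\eta$ gives $\bp_0(k,0)+s\bp_0(k,1)=\bp(k)(s\bq(k)+1-\bq(k))$ by \eqref{def_p0}, so $\sum_{k,\eta}p_{s,0}(k,\eta)=\kappa(s)^{-1}\sum_k\bp(k)\kappa(s)^k(s\bq(k)+1-\bq(k))=\kappa(s)^{-1}f_s(\kappa(s))=1$. It then suffices to show that $\Q_{s,1}$ and $\p_{p_{s,0}}$ agree on every cylinder event $\{r_n^*(\tau^*)=r_n^*(\bt^*)\}$, since these generate $\cF_n$ and $\cF=\bigvee_n\cF_n$. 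Using \eqref{def_p} for the law $\p_{p_{s,0}}$ and inserting the definition \eqref{dfps0}, the product over $u\in r_{n-1}(\bt)$ factorizes into $\prod_u\bp_0(k_u(\bt),\eta_u)$, a factor $s^{\sum_{|u|\le n-1}\eta_u}=s^{M_n(\bt^*)}$, and a factor $\kappa(s)^{\sum_{|u|\le n-1}(k_u(\bt)-1)}$. The exponent of the last factor telescopes: $\sum_{|u|\le n-1}k_u(\bt)=\sum_{j=1}^{n}Z_j(\bt^*)$ while the number of nodes in $r_{n-1}(\bt)$ is $\sum_{j=0}^{n-1}Z_j(\bt^*)$, so the difference equals $Z_n(\bt^*)-1$. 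Recognizing $\prod_u\bp_0(k_u(\bt),\eta_u)=\p_{\bp_0}(r_n^*(\tau^*)=r_n^*(\bt^*))$ via \eqref{def_p}, I obtain
\[
\p_{p_{s,0}}(r_n^*(\tau^*)=r_n^*(\bt^*))=s^{M_n(\bt^*)}\kappa(s)^{Z_n(\bt^*)-1}\,\p_{\bp_0}(r_n^*(\tau^*)=r_n^*(\bt^*)).
\]
Since $M_n$ and $Z_n$ are determined by $r_n^*$, the right-hand side is exactly $\E[\indic_{\{r_n^*(\tau^*)=r_n^*(\bt^*)\}}s^{M_n}\kappa(s)^{Z_n-1}]=\Q_{s,1}(r_n^*(\tau^*)=r_n^*(\bt^*))$, which proves the two measures coincide.

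The computations are mostly routine; the two steps requiring care are the evaluation $\E[s^{M_n}\kappa(s)^{Z_n}]=\kappa(s)$, which makes the normalization collapse to the clean power $\kappa(s)^{Z_n-1}$ and uses $\bp(0)>0$ in an essential way to guarantee a nonzero denominator, and the bookkeeping of the exponent of $\kappa(s)$ in Part 2, where one must correctly count children versus nodes to obtain $Z_n-1$ rather than $Z_n$.
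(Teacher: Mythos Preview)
Your proof is correct and follows essentially the same approach as the paper: the same conditioning via the branching property to obtain $\E[s^{M_{n+p}}t^{Z_{n+p}}\mid\cF_n]=s^{M_n}f_s^p(t)^{Z_n}$, the same passage to the limit by dominated convergence, and the same cylinder-set computation in Part~2 using $\sum_{|u|\le n-1}(k_u-1)=Z_n-1$. Your extra verifications that $\kappa(s)>0$ (so the denominator is nonzero) and that $p_{s,0}$ sums to $1$ via the fixed-point relation are more explicit than the paper's treatment but not substantively different.
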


\begin{proof}
\begin{enumerate}
\item Thanks to the branching property and \eqref{expM_p}, for $p,n\in\N$, we have:
\begin{equation}\label{expespsMptZp}
	\E\left[s^{M_{n+p}}t^{Z_{n+p}}|\cF_n\right]
	=s^{M_n}	\E\left[s^{M_{p}}t^{Z_{p}}\right]^{Z_n}=s^{M_n}f_s^{p}(t)^{Z_n}.
\end{equation}
As $s^{M_n}f_s^{p}(t)^{Z_n}\leq 1$, by dominated convergence theorem and according to \eqref{limespsMptZp}, we obtain:
\begin{align*}
	\lim_{p\to+\infty}\frac{\E\left[\indic_{\Lambda_n}s^{M_{n+p}}t^{Z_{n+p}}\right]}{\E\left[s^{M_{n+p}}t^{Z_{n+p}}\right]} & =\lim_{p\to+\infty}\frac{\E\left[\indic_{\Lambda_n}	s^{M_n}f_s^{p}(t)^{Z_n}\right]}{f_s^{n+p}(t)}\\
	&=\E\left[\indic_{\Lambda_n}	s^{M_n}\kappa(s)^{Z_n-1}\right].
\end{align*}
\item Let $\tau_{s,1}$ be a MGW tree with the reproduction-marking law $p_{s,0}$ defined on some probability space $(\Omega,\mathscr{A},P)$. To obtain our result, it suffices to prove that:
\[\forall \bt^{*}\in\T^{*}, \forall n\in \mathbb N,\quad \Q_{s,1}(r^{*}_{n}(\tau^{*})=r^{*}_{n}(\bt^{*}))=P(r^{*}_{n}(\tau_{s,1})=r^{*}_{n}(\bt^{*})).\]
Let $\bt^*=(\bt, (\eta_u)_{u\in\bt})\in\T^*$. As $\sum_{u\in r_{n-1}(\bt)}(k_u(\bt)-1)=Z_n(\bt^*)-1$, we obtain our result since:
\begin{align*}
P & (r^{*}_{n}(\tau_{s,1})=r^{*}_{n}(\bt^{*}))\\
& =\prod_{u\in r^{*}_{n-1}(\bt^{*})}p_{s,0}(k_u(\bt),\eta_u)\\
&=\prod_{u\in r^{*}_{n-1}(\bt^{*})}\bp_0(k_u(\bt),\eta_u)\kappa(s)^{k_u(\bt)-1}s^{\eta_u}\\
&=s^{M_n(\bt^*)}\kappa(s)^{\sum_{u\in r_{n-1}(\bt)}k_u-1}\prod_{u\in r^{*}_{n-1}(\bt^{*})}\bp_0(k_u(\bt),\eta_u)\\
&=s^{M_n(\bt^*)}\kappa(s)^{Z_n(\bt^*)-1}\p(r^{*}_{n}(\tau^*)=r^{*}_{n}(\bt^{*}))\\
&=\Q_{s,1}(r^{*}_{n}(\tau^{*})=r^{*}_{n}(\bt^{*})).
\end{align*}
\end{enumerate}

\end{proof}

As in \cite{abraham_penalization_2020}, the previous penalization can be generalized by looking at the penalization $H_\ell(Z_p)s^{M_p}t^{Z_p}$ where $H_\ell$ is the $\ell$-th Hilbert polynomial defined for $\ell\ge 1$ by
\[
H_\ell(x)=\frac{1}{\ell!}\prod_{j=0}^{\ell-1} (x-j).
\]

Recall the definition of the set $S_{q,\ell}$ of \eqref{expSik}.
Using Faà di Bruno's formula, we have for any functions $f,h$ of class $\mathcal{C}^\ell$:
\begin{align}\label{faadibrunoform}
	\left(f\circ h\right)^{(\ell)}(s)
&=\sum_{i=1}^\ell \frac{\ell!}{i!}f^{(i)}(h(s))\sum_{(n_1,\ldots,n_i)\in S_{i,\ell}}\prod_{j=1}^i\frac{h^{(n_j)}(s)}{n_j!}\cdot
\end{align}

We first need asymptotics for the derivatives of the $p$-th composition of $f_s$ as $p\to +\infty$.

\begin{lem}\label{equivf_sp^k}
	Let $\bp$ be an offspring distribution  satisfying \eqref{condp}, $\bp(0)>0$ and that admits a moment or order $\ell\in\NN$, and let $\bq$ be a mark function satisfying \eqref{condq}. Then, for all $s\in (0,1)$, there exists a positive function $C_\ell$ such that for all $t\in[0,1]$:
	\begin{equation}\label{expequivf_sp^k}
		(f_s^p)^{(\ell)}(t)\underset{p\to+\infty}{\sim} f_s'(\kappa(s))^pC_\ell(t).
	\end{equation} 
\end{lem}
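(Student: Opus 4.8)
The plan is to argue by induction on $\ell$, for fixed $s\in(0,1)$ and $t\in(0,1]$, extracting the multiplier $\lambda:=f_s'(\kappa(s))$ as the dominant geometric factor. First I record the facts used throughout. Since $\bp(0)>0$ we have $\kappa(s)>0$, and since $f_s$ is convex, increasing with $f_s(1)<1$, the multiplier satisfies $0<\lambda<1$: if $\lambda\ge 1$, then by convexity $f_s(t)\ge \kappa(s)+\lambda\bigl(t-\kappa(s)\bigr)\ge t$ for $t>\kappa(s)$, contradicting $f_s(1)<1$; positivity of $\lambda$ comes from $f_s'>0$ on $(0,1]$, which holds because \eqref{condp} forces $\bp(k)>0$ for some $k\ge 2$. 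The hypothesis that $\bp$ has a moment of order $\ell$ guarantees $f_s\in\mathscr C^\ell([0,1])$, so all derivatives below are finite and continuous. Finally, for $t\in(0,1]$ the iterates $f_s^j(t)$ stay in $(0,1]$ and converge to $\kappa(s)$ geometrically fast; this drives every convergence below.

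For the base case $\ell=1$, the chain rule gives $(f_s^p)'(t)=\prod_{j=0}^{p-1}f_s'(f_s^j(t))$, hence
\[
(f_s^p)'(t)=\lambda^p\prod_{j=0}^{p-1}\frac{f_s'(f_s^j(t))}{\lambda}.
\]
Because $f_s^j(t)\to\kappa(s)$ geometrically and $f_s'$ is Lipschitz on a neighbourhood of $\kappa(s)$ (contained in $[0,1)$, where $f_s''$ is bounded), each factor equals $1+O(\lambda^j)$, so the infinite product converges to
\[
C_1(t):=\prod_{j=0}^{+\infty}\frac{f_s'(f_s^j(t))}{\lambda}\in(0,+\infty),
\]
the strict positivity following from $f_s'(f_s^j(t))>0$ for every $j$. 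This is \eqref{expequivf_sp^k} for $\ell=1$.

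For the induction step, assume the claim for all orders $<\ell$. Differentiating $f_s^{p+1}=f_s\circ f_s^p$ with Faà di Bruno's formula \eqref{faadibrunoform} (inner function $f_s^p$, outer $f_s$) and isolating the term $i=1$ gives, with $a_p:=(f_s^p)^{(\ell)}(t)$,
\[
a_{p+1}=f_s'(f_s^p(t))\,a_p+\varepsilon_p,\qquad \varepsilon_p:=\sum_{i=2}^{\ell}\frac{\ell!}{i!}f_s^{(i)}(f_s^p(t))\sum_{(n_1,\dots,n_i)\in S_{i,\ell}}\prod_{j=1}^i\frac{(f_s^p)^{(n_j)}(t)}{n_j!}.
\]
In $\varepsilon_p$ every factor $(f_s^p)^{(n_j)}(t)$ has order $n_j<\ell$, hence is $O(\lambda^p)$ by induction; as each term carries $i\ge 2$ such factors and $f_s^{(i)}(f_s^p(t))\to f_s^{(i)}(\kappa(s))$ is bounded, we get $\varepsilon_p=O(\lambda^{2p})=o(\lambda^p)$. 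Writing $P_p:=(f_s^p)'(t)=\prod_{j=0}^{p-1}f_s'(f_s^j(t))$, so that $P_{p+1}=f_s'(f_s^p(t))P_p$, the recursion becomes
\[
\frac{a_{p+1}}{P_{p+1}}=\frac{a_p}{P_p}+\frac{\varepsilon_p}{P_{p+1}},
\]
and since $P_{p+1}\sim\lambda^{p+1}C_1(t)$ the remainder $\varepsilon_p/P_{p+1}=O(\lambda^p)$ is summable. Thus $a_p/P_p$ converges to a finite limit $L(t)$, and as $P_p\sim\lambda^pC_1(t)$ we obtain $a_p\sim\lambda^p\,C_1(t)L(t)$, i.e.\ \eqref{expequivf_sp^k} with $C_\ell(t):=C_1(t)L(t)$.

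The remaining and most delicate point is the positivity $C_\ell(t)>0$, i.e.\ $L(t)>0$; I expect this, rather than the asymptotics, to be the main obstacle. Here I would use that $f_s$ is a power series in $t$ with nonnegative coefficients and positive constant term, so every iterate $f_s^p$ has nonnegative coefficients; consequently $a_p\ge 0$ and $\varepsilon_p\ge 0$ for $t\in(0,1]$, which makes $a_p/P_p$ nondecreasing. Then $L(t)=\sup_p a_p/P_p>0$ as soon as $a_{p_0}(t)>0$ for a single $p_0$, and this holds for all large $p_0$ because \eqref{condp} forces $\bp$ to charge some integer $\ge 2$, so $\deg f_s^p\to+\infty$ and $(f_s^p)^{(\ell)}$ is a nonzero power series with nonnegative coefficients, hence strictly positive on $(0,1]$. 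The value $t=0$ is genuinely degenerate (when $\bp(1)=0$ one has $(f_s^p)'(0)=0$ for all $p$), but this is harmless for the application, since the penalization $H_\ell(Z_{n+p})s^{M_{n+p}}t^{Z_{n+p}}$ vanishes at $t=0$ for $\ell\ge 1$; I would therefore state and use the lemma on $t\in(0,1]$.
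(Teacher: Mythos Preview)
Your argument is correct and follows essentially the same route as the paper: the base case via the infinite product $\prod_{j\ge 0}f_s'(f_s^j(t))/\lambda$, and the induction step via Fa\`a di Bruno applied to $f_s^{p+1}=f_s\circ f_s^p$, yielding exactly the telescoping identity $a_{p+1}/P_{p+1}-a_p/P_p=\varepsilon_p/P_{p+1}$ that the paper uses (its display \eqref{exptelf_spk}). Your treatment of positivity (monotonicity of $a_p/P_p$ from $\varepsilon_p\ge 0$) and your remark that $t=0$ can be degenerate when $\bp(1)=0$ are in fact more careful than the paper, which simply asserts $C_\ell(t)>0$ on $[0,1]$.
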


 {\begin{proof}
 Note that $f_s'(\kappa(s))\in(0,1)$ since $f_s$ is convex, increasing on $[0,1]$, $f_s(0)>0$ and $f_s(1)<1$ by \eqref{condq}.
 
 To prove \eqref{expequivf_sp^k} for $\ell=1$, we use a proof similar to that in \cite{athreya_branching_2004} p.38.	We study the function 
	\begin{equation}\label{dfQp(t)}
		Q_p(t):=f_s'(\kappa(s))^{-p}\left(f_s^p(t)-\kappa(s)\right).
	\end{equation}
	This function can be derived and we have:
	\begin{equation*}
		Q_p'(t)=\frac{(f_s^p)'(t)}{f_s'(\kappa(s))^{p}}=\prod^{p-1}_{j=0}\frac{f_s'(f_s^j(t))}{f_s'(\kappa(s))}=\prod^{p-1}_{j=0}\left(1+\left(\frac{f_s'(f_s^j(t))}{f_s'(\kappa(s))}-1\right)\right).
	\end{equation*}
	We fix $t\in[0,1]$ and show first that $Q_p'(t)$ admits a positive limit $Q^\prime(t)$ when $p$ goes to infinity. The positivity is clear as $f_s^j(t)\in[0,1[$ implying that $f_s'(f_s^j(t))>0$.\\
The existence of this limit is equivalent to the convergence of the series with general term  $a_j:=| f_s'(\kappa(s))-f_s'(f_s^j(t))|$.\\
Let $\varepsilon>0$ be such that $0<\kappa(s)+\varepsilon<1$ and let $\gamma:=f_s'(\kappa(s)+\varepsilon)<1$. According to \eqref{limespsMptZp}, there exists $j_0$ such that for all $j\geq 0$, $f_s^{j+j_0}(t)<\kappa(s)+\varepsilon$. 
	Let $j\in\N$. Since $f_s''$ is increasing and $\max(\kappa(s),f_s^{j+j_0}(t))<\kappa(s)+\varepsilon$, thanks to the mean value inequality, we have: 
\begin{equation*}
	a_{j+j_0}=|f_s'(\kappa(s))-f_s'(f_s^{j+j_0}(t))|\leq f_s''(\kappa(s)+\varepsilon)| \kappa(s)-f_s^{j+j_0}(t)|.
\end{equation*}
With the same arguments applied to $f_s'$ and using the fact that $f_s(\kappa(s))=\kappa(s)$:
\begin{equation*}
	|\kappa(s)-f_s^{j+j_0}(t)|\leq \gamma|f_s^{j+j_0-1}(\kappa(s))-f_s^{j+j_0-1}(t)|.
\end{equation*}
By iteration we obtain: $
	a_{j+j_0}\leq  f_s''(\kappa(s)+\varepsilon)\gamma^j,$
since $0<\gamma<1$, we have $\sum^{\infty}_{j=0}a_j<+\infty$.

Therefore, $\frac{(f_s^p)'(t)}{f_s'(\kappa(s))^{p}}\underset{p\to +\infty}{\longrightarrow}Q'(t)$, implying \eqref{expequivf_sp^k} for $\ell=1$ with $C_1(t)=Q'(t)$. 

 The proof is similar of the induction step of Lemma 3.2 in \cite{abraham_penalization_2020}. We assume that \eqref{expequivf_sp^k} is true for all $j\leq \ell-1$. We use Faà di Bruno's Formula \eqref{faadibrunoform}, and we obtain:
%	\begin{align}\label{exptelf_spk}
%		\frac{\left(f_s^{p+1}\right)^{(k)}(t)}{\left(f_s^{p+1}\right)'(t)}=\frac{1}{\left(f_s^{p+1}\right)'(t)}&\overset{k}{\underset{i=1}{\sum}}\frac{k!}{i!}f_s^{(i)}(f_s^{p}(t))\underset{\underset{\in S_{i,k}}{(n_1,\ldots,n_i)}}{\sum}\overset{i}{\underset{j=1}{\prod}}\frac{\left(f_s^{p}\right)^{(n_j)}(t)}{n_j!}\nonumber\\
%		&\Longleftrightarrow \nonumber\\
\begin{multline}\label{exptelf_spk}
		\frac{(f_s^{p+1})^{(\ell)}(t)}{(f_s^{p+1})'(t)}-	\frac{(f_s^{p})^{(\ell)}(t)}{(f_s^{p})'(t)}\\
		=\frac{1}{(f_s^{p+1})'(t)}\sum^{\ell}_{i=2}\frac{\ell!}{i!}f_s^{(i)}(f_s^{p}(t))\sum_{(n_1,\ldots,n_i)\in S_{i,\ell}}\prod_{j=1}^i\frac{\left(f_s^{p}\right)^{(n_j)}(t)}{n_j!}
\end{multline}
Since for every $2\leq i\leq \ell$ and every $(n_1,\ldots,n_i)\in S_{i,\ell}$, $n_j<\ell$ for all $j\leq i$, we can use the induction hypothesis:
\begin{multline}\label{equivtel}
	\sum_{(n_1,\ldots,n_i)\in S_{i,\ell}}\prod_{j=1}^i\frac{\left(f_s^{p}\right)^{(n_j)}(t)}{n_j!}\\
	\underset{p\to+\infty}{\sim}\sum_{(n_1,\ldots,n_i)\in S_{i,\ell}}\prod_{i=1}^i\frac{f_s'(\kappa(s))^p C_{n_j}(t)}{n_j!}
	=:f_s'(\kappa(s))^{pi}K_i(t),
\end{multline}
with $K_i$ a positive function.

Equation \eqref{limespsMptZp} and the continuity of $f_s^{(j)}$ imply that 
\[\lim_{p\to+\infty}f_s^{(j)}(f_s^{p}(t))=f_s^{(j)}(\kappa(s))
\]
 for all $j\geq 1$.
 
Moreover, since $f_s'(\kappa(s))<1$,  
Equations \eqref{exptelf_spk} and \eqref{equivtel} imply that:
\begin{align*}
	\frac{(f_s^{p+1})^{(\ell)}(t)}{(f_s^{p+1})'(t)}-	\frac{(f_s^{p})^{(\ell)}(t)}{(f_s^{p})'(t)}
		& \underset{p\to+\infty}{\sim}\frac{\ell!}{2!}f_s''(\kappa(s))\frac{f_s'(\kappa(s))^{2p}}{{(f_s^{p+1})'(t)}}K_2(t)\\
	& \underset{p\to+\infty}{\sim}f_s'(\kappa(s))^{p}\tilde{K}(t),
\end{align*}
with $\tilde{K}(t)$ a positive function.

Since $0<f_s'(\kappa(s))<1$, we have on $[0,1]$:
\begin{multline*}
	0<\tilde{C_\ell}(t):=\lim_{p\to+\infty} \frac{\left(f_s^{p}\right)^{(\ell)}(t)}{\left(f_s^{p}\right)'(t)}\\
	=\frac{(f_s)^{(\ell)}(t)}{f_s'(t)}+\sum_{p\geq 1}\left(	\frac{(f_s^{p+1})^{(\ell)}(t)}{(f_s^{p+1})'(t)}-	\frac{(f_s^{p})^{(\ell)}(t)}{(f_s^{p})'(t)}\right)<+\infty.
\end{multline*}
The previous result is equivalent to 
$	\left(f_s^{p}\right)^{(\ell)}(t)\underset{p\to+\infty}{\sim}\tilde{C_\ell}(t)\left(f_s^{p}\right)'(t)$,
and we conclude with the induction hypothesis and $C_\ell(t):=C_1(t)\tilde{C_\ell}(t)$.\end{proof}}

We can now show the results of the limit with the penalization function $H_\ell(Z_p)s^{M_p}t^{Z_p-\ell}$.
\begin{theo}\label{pensMntZnderk}
	Let $\bp$ be an offspring distribution satisfying \eqref{condp} with $\bp(0)>0$ and that admits a moment of order $\ell\in\NN$, and let $\bq$ be a mark function satisfying \eqref{condq}. Let $n\in\N$, $\Lambda_n\in\cF_n$, $t\in[0,1]$ and $s\in(0,1)$ fixed, we have
	\[
		\frac{\E\left[\indic_{\Lambda_n}H_\ell(Z_{n+p})s^{M_{n+p}}t^{Z_{n+p}}\right]}{\E\left[H_\ell(Z_{n+p})s^{M_{n+p}}t^{Z_{n+p}}\right]}\underset{p\to +\infty}{\longrightarrow}\E\left[\indic_{\Lambda_n}\frac{s^{M_n}Z_n \kappa(s)^{Z_n-1}}{f_s'(\kappa(s))^n}\right].
	\]
\end{theo}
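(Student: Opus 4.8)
The plan is to reduce everything to derivatives in $t$ of the iterated generating function $f_s^p$ and then feed them into Lemma \ref{equivf_sp^k}. The key observation is the elementary identity
\[
\ell!\,H_\ell(x)\,t^{x-\ell}=\frac{d^\ell}{dt^\ell}\,t^{x},\qquad x\in\N,
\]
which converts the Hilbert-polynomial weight into an $\ell$-th derivative. Since dividing numerator and denominator by $t^\ell$ does not change the ratio in the statement, I may replace $t^{Z_{n+p}}$ by $t^{Z_{n+p}-\ell}$ and, differentiating $\E[s^{M_{n+p}}t^{Z_{n+p}}]=f_s^{n+p}(t)$ under the expectation (legitimate because $Z_{n+p}$ has a moment of order $\ell$ by Lemma \ref{propmomordk}), write
\[
\E\bigl[H_\ell(Z_{n+p})s^{M_{n+p}}t^{Z_{n+p}-\ell}\bigr]=\frac{1}{\ell!}\,(f_s^{n+p})^{(\ell)}(t),
\]
and similarly for its conditional version.

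First I would handle the numerator by conditioning on $\cF_n$. Using the branching property in the form \eqref{expespsMptZp}, namely $\E[s^{M_{n+p}}t^{Z_{n+p}}\mid\cF_n]=s^{M_n}(f_s^p(t))^{Z_n}$, and differentiating $\ell$ times gives
\[
\E\bigl[H_\ell(Z_{n+p})s^{M_{n+p}}t^{Z_{n+p}-\ell}\bigm|\cF_n\bigr]=\frac{s^{M_n}}{\ell!}\,\frac{d^\ell}{dt^\ell}\bigl(f_s^p(t)\bigr)^{Z_n}.
\]
I would then expand the $\ell$-th derivative of $x\mapsto x^{Z_n}$ composed with $f_s^p$ via Faà di Bruno's formula \eqref{faadibrunoform}. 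Each inner factor $(f_s^p)^{(n_j)}(t)$ is equivalent to $f_s'(\kappa(s))^p C_{n_j}(t)$ by Lemma \ref{equivf_sp^k}, so the summand indexed by $i$ carries a factor $f_s'(\kappa(s))^{pi}$; since $f_s'(\kappa(s))\in(0,1)$, only the term $i=1$ (which forces $n_1=\ell$) survives after division by $f_s'(\kappa(s))^p$. Combined with $f_s^p(t)\to\kappa(s)$, this yields the almost sure limit
\[
\frac{1}{f_s'(\kappa(s))^p}\,\E\bigl[H_\ell(Z_{n+p})s^{M_{n+p}}t^{Z_{n+p}-\ell}\bigm|\cF_n\bigr]\xrightarrow[p\to\infty]{}\frac{C_\ell(t)}{\ell!}\,s^{M_n}Z_n\kappa(s)^{Z_n-1}.
\]

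To interchange limit and expectation I would invoke dominated convergence. Replacing each $(f_s^p)^{(n_j)}(t)$ by its $p$-uniform bound of the form $\mathrm{const}\cdot f_s'(\kappa(s))^p$ (available for large $p$ from Lemma \ref{equivf_sp^k}), using $(f_s^p(t))^{Z_n-i}\le 1$, $Z_n!/(Z_n-i)!\le Z_n^\ell$, and discarding the factors $f_s'(\kappa(s))^{p(i-1)}\le 1$, one obtains a bound of the form $C\,Z_n^\ell$, integrable by Lemma \ref{propmomordk}. Dominated convergence then gives, for every $\Lambda_n\in\cF_n$,
\[
\frac{\E[\indic_{\Lambda_n}H_\ell(Z_{n+p})s^{M_{n+p}}t^{Z_{n+p}-\ell}]}{f_s'(\kappa(s))^p}\xrightarrow[p\to\infty]{}\frac{C_\ell(t)}{\ell!}\,\E\bigl[\indic_{\Lambda_n}s^{M_n}Z_n\kappa(s)^{Z_n-1}\bigr].
\]
Taking $\Lambda_n=\Omega$ gives the same asymptotics for the denominator, so the factors $C_\ell(t)/\ell!$ and $f_s'(\kappa(s))^p$ cancel in the ratio. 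It then remains only to identify the normalising constant through $\E[s^{M_n}Z_n\kappa(s)^{Z_n-1}]=(f_s^n)'(\kappa(s))=f_s'(\kappa(s))^n$, which follows by differentiating $\E[s^{M_n}t^{Z_n}]=f_s^n(t)$ at the fixed point $t=\kappa(s)$ and applying the chain rule. The main obstacle is the domination step: converting the pointwise asymptotic equivalences of Lemma \ref{equivf_sp^k} into a single $\cF_n$-measurable integrable bound valid uniformly in $p$, which is precisely what licenses the interchange of limit and expectation.
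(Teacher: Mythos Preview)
Your proof is correct and follows essentially the same route as the paper: both differentiate the identity $\E[s^{M_{n+p}}t^{Z_{n+p}}\mid\cF_n]=s^{M_n}(f_s^p(t))^{Z_n}$ via Faà di Bruno, apply Lemma~\ref{equivf_sp^k} to see that only the $i=1$ term survives after normalizing by $f_s'(\kappa(s))^p$, and pass to the limit by dominated convergence using a polynomial bound in $Z_n$. The one cosmetic difference is in how the factor $f_s'(\kappa(s))^n$ appears: the paper applies \eqref{limcool} with $n=0$ and $p$ replaced by $n+p$, so that the denominator is normalized by $f_s'(\kappa(s))^{n+p}$ and the ratio of normalizers produces the $f_s'(\kappa(s))^{-n}$; you instead take $\Lambda_n=\Omega$ at fixed $n$ and identify the resulting constant directly via $\E[s^{M_n}Z_n\kappa(s)^{Z_n-1}]=(f_s^n)'(\kappa(s))=f_s'(\kappa(s))^n$, using the chain rule at the fixed point. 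Both variants are equivalent and equally short.
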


\begin{proof} The proof is inspired by those of Theorem 3.3 in \cite{abraham_penalization_2020}.

Let $s\in(0,1)$ be fixed.
Let $p,n\in\N$, according to \eqref{expespsMptZp} and Faà di Bruno's formula \eqref{faadibrunoform}, we have for every $t\in[0,1]$:
\begin{align}\label{expespsMptZpderk}
	\E[H_\ell(Z_p)&s^{M_{n+p}}t^{Z_{n+p}-\ell}|\cF_n]\\
		&=\frac{s^{M_n}}{\ell!}\frac{\partial^\ell}{\partial t^\ell}\left(f_s^{p}(t)^{Z_n}\right)\nonumber\\
	&=s^{M_n}\sum_{i=1}^\ell H_i(Z_n)f_s^{p}(t)^{Z_n-i}\sum_{(n_1,\ldots,n_i)\in S_{i,\ell}}\prod_{j=1}^i\frac{\left(f_s^{p}\right)^{(n_j)}(t)}{n_j!}\nonumber\\
	&=s^{M_n}\sum_{i=1}^\ell H_i(Z_n)f_s^{p}(t)^{Z_n-i}R_i(t).
\end{align}
Applying \eqref{expequivf_sp^k}, as $0<f_s'(\kappa(s))<1$, we obtain for all $i\geq1$ and $t\in[0,1]$, as $p$ goes to $+\infty$,
\begin{equation}\label{a.s.}
	\frac{R_i(t)}{f^{\prime}(\kappa(s))^{p}}
		=c_i(t)f_s'(\kappa(s))^{p(i-1)}+o\left(f_s'(\kappa(s))^{p(i-1)}\right)
\end{equation}
for some strictly positive function $c_i$. Thereby, using \eqref{limespsMptZp} in \eqref{expespsMptZpderk}, we get a.s.:
\begin{align*}
\lim_{p\rightarrow+\infty}\frac{\E\left[H_\ell(Z_{p+n})s^{M_{p+n}}t^{Z_{p+n}-\ell}|\cF_n\right]}{f^{\prime}(\kappa(s))^{p}}=\frac{c_1(t)}{\ell!}s^{M_n}Z_n\kappa(s)^{Z_n-1}.
\end{align*}
Note that \eqref{a.s.} implies the existence of a positive function $C$, such that, for all $t\in[0,1]$: 
\begin{align*}
\max_{1\le i\le \ell}\frac{R_i(t)}{f^{\prime}(\kappa(s))^{p}}\le C(t),
\end{align*}
and since $s<1$, $ f_s^{p}(t)<1$ for all $t\in[0,1]$:
\begin{align}\label{upperbound3}
0\le \frac{\E\left[H_\ell(Z_{p+n})s^{M_{p+n}}t^{Z_{p+n}-\ell}|\cF_n\right]}{f^{\prime}(\kappa(s))^{p}}\le C(t)\sum_{i=1}^\ell H_i(Z_n).
\end{align}
As $H_i(Z_n)$ is integrable, we have by dominated convergence:
\begin{equation}\label{limcool}
\lim_{p\rightarrow+\infty}\frac{\E\left[\mathds{1}_{\Lambda_n}H_\ell(Z_{p+n})s^{M_{p+n}}t^{Z_{p+n}-\ell}\right]}{f^{\prime}(\kappa(s))^{p}}=\E\left[\mathds{1}_{\Lambda_n}s^{M_n}Z_n\kappa(s)^{Z_n-1}\right].
\end{equation}
We obtain our result taking the previous formula for $n=0$ and $\Lambda_n=\Omega$ and taking the ratio of these two quantities. 
\end{proof}

In order to describe the  new probability $\Q_{s,2}$ defined on $(\T^*,\cF_\infty)$ by:
\begin{equation}\label{def_Qs2}
\forall \Lambda_n\in\cF_n,\ 	\Q_{s,2}(\Lambda_n)=\E\left[\indic_{\Lambda_n}\frac{s^{M_n}Z_n \kappa(s)^{Z_n-1}}{f_s'(\kappa(s))^n}\right],
\end{equation}
we need to introduce the following marked random tree:
\begin{df}\label{def_taus2}
A special random marked tree is a multi-type random marked tree $\tau_{s,2}$ described as follows:
\begin{itemize}
\item the nodes are normal or special;
\item the root is special;
\item the reproduction-marking law of a normal node is $p_{s,0}$ (see \eqref{dfps0});
\item the reproduction-marking law of a special node is
\[
p_{s,1}(k,\eta)=p_{s,0}(k,\eta)\frac{k}{f'_s(\kappa(s))};
\]
\item at each generation, conditionnally given the number of individuals at that generation, there is a unique special node chosen uniformly among all the individuals of this generation.
\end{itemize}
\end{df}

Using the same arguments as in the proof of Theorem \ref{probunderQ_krn}, 
we have the following result:
\begin{theo}\label{probunderQ_(s,k)rn}
The probability $\Q_{s,2}$ defined  by \eqref{def_Qs2} is
the distribution of  a special random marked tree defined in Definition \ref{def_taus2}.
\end{theo}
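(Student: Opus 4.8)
The plan is to follow verbatim the strategy used in the proof of Theorem~\ref{probunderQ_krn}: it suffices to check that the finite-dimensional marginals of $\Q_{s,2}$ and of the law of $\tau_{s,2}$ coincide, i.e. that for every $n\in\N$ and every $\bt^*\in\T^*$,
\begin{equation*}
\Q_{s,2}(r^*_n(\tau^*)=r^*_n(\bt^*))=P(r^*_n(\tau_{s,2})=r^*_n(\bt^*)),
\end{equation*}
and to prove this by induction on $n$. Exactly as in \eqref{toto}, the definition \eqref{def_Qs2} yields the closed form
\begin{multline*}
\Q_{s,2}(r^*_n(\tau^*)=r^*_n(\bt^*))\\
=\frac{s^{M_n(\bt^*)}Z_n(\bt^*)\kappa(s)^{Z_n(\bt^*)-1}}{f_s'(\kappa(s))^n}\,\p(r^*_n(\tau^*)=r^*_n(\bt^*)),
\end{multline*}
which is the quantity I want to recover from the auxiliary tree. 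The case $n=0$ is immediate since both sides reduce to the probability of the root carrying mark $0$.

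For the inductive step I would unfold the recursive construction of $\tau_{s,2}$ from generation $n$ to generation $n+1$. Writing $a_n$ for the unique special node at generation $n$, the node $a_n$ reproduces and is marked according to $p_{s,1}$ while every other node of $z_n(\bt^*)$ uses $p_{s,0}$; afterwards the new special node $a_{n+1}$ is drawn uniformly among the $Z_{n+1}(\bt^*)$ children. Summing the joint probability over the position of $a_{n+1}$ (which produces $Z_{n+1}$ identical terms, each weighted by $1/Z_{n+1}$, so that this reselection washes out of the marginal) and over the position $v=a_n$, and using $p_{s,1}(k,\eta)=p_{s,0}(k,\eta)\,k/f_s'(\kappa(s))$, I would reach
\begin{multline*}
P(r^*_{n+1}(\tau_{s,2})=r^*_{n+1}(\bt^*))\\
=\frac{1}{f_s'(\kappa(s))}\Bigl(\prod_{u\in z_n(\bt^*)}p_{s,0}(k_u,\eta_u)\Bigr)\sum_{v\in z_n(\bt^*)}k_v\,P\bigl(r^*_n(\tau_{s,2})=r^*_n(\bt^*),\,a_n=v\bigr).
\end{multline*}

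The key step, and the one place where the argument genuinely departs from Theorem~\ref{probunderQ_krn}, is to identify the conditional law of the special node $a_n$ given the event $\{r^*_n(\tau_{s,2})=r^*_n(\bt^*)\}$. Since $a_n$ is drawn uniformly among the individuals of generation $n$ independently of the tree already built, it is uniform on $z_n(\bt^*)$ conditionally on that event, so $P(r^*_n(\tau_{s,2})=r^*_n(\bt^*),a_n=v)=\frac{1}{Z_n(\bt^*)}P(r^*_n(\tau_{s,2})=r^*_n(\bt^*))$. Substituting this and using $\sum_{v\in z_n(\bt^*)}k_v=Z_{n+1}(\bt^*)$ converts the sum into a factor $Z_{n+1}/Z_n$. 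It then remains to apply the induction hypothesis together with the closed form above, and to expand $\prod_{u\in z_n(\bt^*)}p_{s,0}(k_u,\eta_u)$ via \eqref{dfps0}: the factors $\bp_0(k_u,\eta_u)$ upgrade $\p(r^*_n(\tau^*)=r^*_n(\bt^*))$ to $\p(r^*_{n+1}(\tau^*)=r^*_{n+1}(\bt^*))$ by \eqref{def_p}, the factors $s^{\eta_u}$ multiply to $s^{M_{n+1}-M_n}$, and the factors $\kappa(s)^{k_u-1}$ multiply to $\kappa(s)^{Z_{n+1}-Z_n}$. Collecting the powers of $s$, of $\kappa(s)$ and of $f_s'(\kappa(s))$ reproduces exactly the level-$(n+1)$ density, closing the induction. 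The main obstacle is thus purely the bookkeeping of the special node: establishing its conditional uniformity and correctly combining the weighted selection at generation $n$ (the weight $k_v$ coming from $p_{s,1}$) with the uniform reselection at generation $n+1$.
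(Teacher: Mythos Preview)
Your proposal is correct and is precisely the argument the paper has in mind: the paper gives no separate proof here and simply states that one uses ``the same arguments as in the proof of Theorem~\ref{probunderQ_krn}'', which is exactly the induction on $n$ that you carry out. Your handling of the special node (conditional uniformity of $a_n$ on $z_n(\bt^*)$, the factor $k_v/f_s'(\kappa(s))$ from $p_{s,1}$, and the resulting $Z_{n+1}/Z_n$) is the natural specialization of the type-selection step $\gamma_{\ell,(t_u)}$ to the one-spine setting, and the bookkeeping with \eqref{dfps0} and \eqref{def_p} is correct.
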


\subsection{Case $\bp(0)=0$} 

Henceforth, we consider the case where $\bp(0)=0$ implying $\kappa(s)=0$ and denote by $r:=\min\{j>0;~\bp(j)>0\}$ and $\alpha_r(s):=\bp(r)(s\bq(r)+1-\bq(r))$.\\ 
We begin to give an equivalent of Lemma \ref{equivf_sp^k} in this case. 
\begin{lem}\label{expfspk0}
	Assume $r\geq2$. For every $\ell\in\N$, every $s\in (0,1)$ and every $t\in(0,1]$, there exists a positive function $K_{s,\ell}(t)$, such that, as $p\to+\infty$,
	\begin{equation}\label{equivdur}
		(f_s^p)^{(\ell)}(t)=K_{s,\ell}(t)r^{p\ell}e^{r^pb(t)}(1+o(1)),
\end{equation}
where
\begin{equation*}
	b(t):= \ln(t)+\sum_{j=0}^{+\infty}r^{-(j+1)}\ln\left(\frac{f_s^{j+1}(t)}{f_s^j(t)^r}\right).
\end{equation*}
\end{lem}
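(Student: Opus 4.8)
The plan is to argue by induction on $\ell$, treating $\ell=0$ and $\ell=1$ as base cases and propagating to $\ell\ge 2$ through Fa\`a di Bruno's formula \eqref{faadibrunoform}. Throughout I use that, since $\bp(0)=0$ and $r\ge 2$, the origin is the (super-attracting) fixed point of the increasing convex map $f_s$, that $f_s(t)<t$ on $(0,1]$ so that $f_s^p(t)\downarrow 0$ by \eqref{limespsMptZp}, and that near $0$ one has $f_s(v)=\alpha_r(s)v^r(1+o(1))$ and, more generally, $f_s^{(i)}(v)\sim \frac{r!}{(r-i)!}\alpha_r(s)v^{r-i}$ for $1\le i\le r$. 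Heuristically $b$ is the logarithm of the B\"ottcher coordinate conjugating $f_s$ to the model map $v\mapsto\alpha_r(s)v^r$ (whose iterates are explicit), and the statement says that all derivatives of the iterates are governed by this conjugacy; the degeneracy $f_s'(0)=0$ is what forbids copying the argument of Lemma \ref{equivf_sp^k} verbatim.

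For $\ell=0$ I would first pin down $f_s^p(t)$ itself. Writing $a_j=\ln f_s^j(t)$, Abel summation gives the telescoping identity
\[
r^{-p}\ln f_s^p(t)=\ln t+\sum_{j=0}^{p-1}r^{-(j+1)}\ln\!\left(\frac{f_s^{j+1}(t)}{f_s^{j}(t)^r}\right),
\]
whose right-hand side is precisely the partial sum defining $b(t)$. Since $f_s^j(t)\to 0$ and $f_s(v)/v^r\to\alpha_r(s)$, the general term tends to $r^{-(j+1)}\ln\alpha_r(s)$ and the series converges, so $r^{-p}\ln f_s^p(t)\to b(t)$. Refining, $\ln f_s^p(t)-r^p b(t)=-\sum_{k\ge 0}r^{-k-1}\ln\big(f_s^{p+k+1}(t)/f_s^{p+k}(t)^r\big)$ converges by dominated convergence to $-\frac{\ln\alpha_r(s)}{r-1}$, giving \eqref{equivdur} at order $0$ with the (here constant) prefactor $K_{s,0}(t)=\alpha_r(s)^{-1/(r-1)}$.

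I would next record the key scaling relation $b(f_s(t))=\lim_p r^{-p}\ln f_s^{p+1}(t)=r\,b(t)$, which is exactly what makes the factors $r^{p\ell}$ and $e^{r^pb(t)}$ compatible under one iteration. For $\ell=1$ I use $(f_s^p)'(t)=\prod_{j=0}^{p-1}f_s'(f_s^j(t))$ and expand $\ln f_s'(f_s^j(t))=\ln(r\alpha_r(s))+(r-1)\ln f_s^j(t)+O(f_s^j(t))$. Summing over $j$, inserting $\ln f_s^j(t)=r^jb(t)+\ln K_{s,0}+\delta_j$ with $\sum_j|\delta_j|<\infty$ (a consequence of the $\ell=0$ estimate), and using the explicit value of $K_{s,0}$, the linear-in-$p$ contributions collapse exactly to $p\ln r$; hence $\ln(f_s^p)'(t)=p\ln r+r^pb(t)+\ln K_{s,1}(t)+o(1)$ for a strictly positive $K_{s,1}(t)$, which is \eqref{equivdur} at order $1$.

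For the induction step $\ell\ge 2$, assuming the claim for all orders $<\ell$, I differentiate $f_s^{p+1}=f_s\circ f_s^p$, isolate the $i=1$ term $f_s'(f_s^p(t))(f_s^p)^{(\ell)}(t)$, and divide by $(f_s^{p+1})'(t)=f_s'(f_s^p(t))(f_s^p)'(t)$ to obtain
\[
\frac{(f_s^{p+1})^{(\ell)}(t)}{(f_s^{p+1})'(t)}-\frac{(f_s^{p})^{(\ell)}(t)}{(f_s^{p})'(t)}=\frac{1}{(f_s^{p+1})'(t)}\sum_{i=2}^{\ell}\frac{\ell!}{i!}f_s^{(i)}(f_s^p(t))\sum_{(n_1,\ldots,n_i)\in S_{i,\ell}}\prod_{j=1}^{i}\frac{(f_s^p)^{(n_j)}(t)}{n_j!}.
\]
Feeding in the induction hypothesis for each $n_j<\ell$, the relation $b(f_s(t))=r\,b(t)$, and the $\ell=1$ estimate for $(f_s^{p+1})'(t)$, each summand on the right is $\sim(\text{const})\,r^{p(\ell-1)-1}$, whence the normalized quantity $\Phi_{\ell,p}(t):=r^{-p(\ell-1)}(f_s^p)^{(\ell)}(t)/(f_s^p)'(t)$ satisfies a contracting affine recursion $\Phi_{\ell,p+1}(t)=r^{-(\ell-1)}\Phi_{\ell,p}(t)+c(t)+o(1)$ with $r^{-(\ell-1)}<1$ and $c(t)>0$ (already the $i=2$ term contributes a strictly positive constant since $r\ge 2$); thus $\Phi_{\ell,p}(t)$ converges to a positive limit and $K_{s,\ell}(t)=K_{s,1}(t)\lim_p\Phi_{\ell,p}(t)$. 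The \emph{main obstacle} is precisely this coexistence of two scales: the double exponential $e^{r^pb(t)}\to 0$ and the polynomial-in-$r^p$ factor $r^{p\ell}$. Because the multipliers $f_s'(f_s^j(t))\to f_s'(0)=0$ degenerate, all the work lies in a careful bookkeeping of the exponents of $r$ and of $e^{r^pb(t)}$ in every summand, in upgrading boundedness to genuine convergence of the normalized ratios, and in using the exact value $K_{s,0}(t)=\alpha_r(s)^{-1/(r-1)}$ to cancel the spurious linear-in-$p$ terms and leave the clean factor $r^{p\ell}$.
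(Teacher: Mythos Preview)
Your proposal is correct and follows essentially the same route as the paper: the $\ell=0$ case is obtained by the same telescoping of $r^{-p}\ln f_s^p(t)$ yielding $K_{s,0}(t)=\alpha_r(s)^{-1/(r-1)}$; the induction step $\ell\ge 2$ rests on the identical Fa\`a di Bruno identity \eqref{exptelf_spk} and the same order-counting showing each summand is of order $r^{p(\ell-1)}$.

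The only cosmetic differences are in packaging. For $\ell=1$ you expand $\ln(f_s^p)'(t)=\sum_{j<p}\ln f_s'(f_s^j(t))$ and Taylor-expand each summand, whereas the paper studies the ratio $(f_s^{p+1})'(t)\big/\bigl((f_s^p)'(t)\,r\alpha_r(s)f_s^p(t)^{r-1}\bigr)$ directly and shows the telescoping series of its logarithms converges; the two computations are equivalent (your cancellation of the linear-in-$p$ terms to $p\ln r$ is exactly where the paper's ratio becomes $1+o(1)$). For $\ell\ge 2$ you phrase the conclusion as convergence of the normalized ratio $\Phi_{\ell,p}=r^{-p(\ell-1)}(f_s^p)^{(\ell)}/(f_s^p)'$ via a contracting affine recursion, while the paper phrases it as a divergent telescoping series whose partial sums are equivalent to a geometric sum; since $r^{\ell-1}>1$ these are the same statement. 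Your invocation of the B\"ottcher functional equation $b(f_s(t))=r\,b(t)$ is a nice conceptual gloss but is not strictly needed in the induction step, since $(f_s^{p+1})'(t)$ can be estimated directly from the $\ell=1$ case at $t$ with $p+1$ in place of $p$.
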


\begin{proof}The case $\ell=0$ is inspired by the proof of Lemma 10 in \cite{fleischmann_left_2009}. First, note that for all $t\in(0,1]$, 
\[f_s(t)=\E\left[s^{M_1}t^{Z_1}\right]\le \E\left[s^{M_1}\right]=:\gamma(s)\in(0,1). \]
$f_s$ being non decreasing, we have:
\[f_s^2(t)\le f_s(\gamma(s))=\sum_{j\ge r}\gamma(s)^j\alpha_j(s)\le \gamma(s)^r,\]
and with an obvious induction reasoning $f_s^p(t)\le \gamma(s)^{r^{p-1}}$ for all $p\in\mathbb N^*$.

Moreover, as for $0<t\le 1, f_s^p(t)\ne 0$, we can write: 
\begin{align}
\frac{f_s^{p+1}(t)}{\alpha_r(s)f_s^p(t)^r}=1+\sum_{j\ge 1}\frac{\alpha_{j+r}(s)}{\alpha_r(s)}f_s^{p}(t)^j,
\end{align}
implying:
\begin{multline}
0\le \frac{f_s^{p+1}(t)}{\alpha_r(s)f_s^p(t)^r}-1\le  f_s^{p}(t)\sum_{j\ge 1}\frac{\alpha_{j+r}(s)}{\alpha_r(s)}\\
\le \gamma(s)^{r^{p-1}}\sum_{j\ge 1}\frac{\alpha_{j+r}(s)}{\alpha_r(s)}=:\gamma(s)^{r^{p-1}}C_r.
\end{multline}
As $\ln(1+u)\le u$ for every nonnegative $u$, we have:
\[\ln\left(\frac{f_s^{p+1}(t)}{\alpha_r(s)f_s^p(t)}\right)\le \frac{f_s^{p+1}(t)}{\alpha_r(s)f_s^p(t)^r}-1\le \gamma(s)^{r^{p-1}}C_r,\]
and therefore, $b:(0,1]\rightarrow \mathbb R$ is well defined since:
\begin{align*}
-\frac{\ln (\alpha_r(s))}{r-1}+\sum_{p\ge 0}\frac{1}{r^{p+1}}\ln\left(\frac{f_s^{p+1}(t)}{f_s^p(t)^r}\right)=
\sum_{p\ge 0}\frac{1}{r^{p+1}}\ln\left(\frac{f_s^{p+1}(t)}{\alpha_r(s)f_s^p(t)^r}\right)<\infty.
\end{align*}
Using the telescopic property of the series:
\begin{align*}
b(t)-\sum_{j\ge p}\frac{1}{r^{j+1}}\ln\left(\frac{f_s^{j+1}(t)}{f_s^j(t)^r}\right)=\ln t+\sum_{j=0}^{p-1}\frac{1}{r^{j+1}}\ln\left(\frac{f_s^{j+1}(t)}{f_s^j(t)^r}\right)=\frac{1}{r^{p}}\ln (f_s^p(t)).
\end{align*}
and 
\begin{align*}
\sum_{j\ge p}\frac{1}{r^{j+1}}\ln\left(\frac{f_s^{j+1}(t)}{f_s^j(t)^r}\right)&=\ln(\alpha_r(s))\frac{r^{-p}}{r-1}+\sum_{j\ge p}\frac{1}{r^{j+1}}\ln\left(\frac{f_s^{j+1}(t)}{\alpha_r(s)f_s^j(t)^r}\right)
\\&=\ln(\alpha_r(s))\frac{r^{-p}}{r-1}+O\left(\frac{1}{r^p}\gamma(s)^{r^{p-1}}\right).
\end{align*}
As a result: 
\begin{align*}
\ln (f^p_s(t))=r^pb(t)-\frac{\ln(\alpha_r(s))}{r-1}+O\left(\gamma(s)^{r^{p-1}}\right),
\end{align*}
giving:
\begin{equation}\label{expfspk00}
f^p_s(t)=\alpha_r(s)^{\frac{-1}{r-1}}e^{r^pb(t)}(1+o(1))=:K_{s,0}(t)e^{r^pb(t)}(1+o(1)). 
\end{equation}
The result for $\ell=1$ is very similar to the previous one. As $Z_1\in L^1$, for all $s\in(0,1)$, $f_s\in\mathscr C^1((0,1])$, and for $t\in(0,1]$, we have:
\begin{equation*}
	\left(f_s^{p+1}\right)'(t)=\left(f_s^{p}\right)'(t)f_s'(f_s^{p}(t))=\left(f_s^{p}\right)'(t)\sum_{j\ge 0}(r+j)\alpha_{j+r}f_s^{p}(t)^{r+j-1}.
\end{equation*}
This implies with \eqref{expfspk00}:
\begin{align*}\label{ingratk=1}
	0\leq\frac{(f_s^{p+1})'(t)}{(f_s^{p})'(t)r\alpha_r(s) f_s^{p}(t)^{r-1}}-1 & = \sum_{j\ge 1}\frac{(r+j)\alpha_{j+r}(s)}{r\alpha_r(s)}f_s^{p}(t)^{j}\\
	&\leq f_s^{p}(t) \sum_{j\ge1}\frac{(r+j)\alpha_{j+r}(s)}{r\alpha_r(s)}\nonumber\\
&\le \widetilde K_s(t)e^{r^pb(t)},
\end{align*}
where $\tilde K_s$ is a positive function. Using again that, for all $u>0$, $\ln(1+u)<u$, we have:
\begin{align*}
	\sum_{p\ge 0}\ln \left(\frac{(f_s^{p+1})'(t)}{(f_s^{p})'(t)r\alpha_r(s) f_s^{p}(t)^{r-1}}\right)<\infty.
\end{align*}
Thanks to Equation \eqref{expfspk00}, we have:
\begin{equation*}
	\ln \left(\frac{(f_s^{p+1})'(t)}{(f_s^{p})'(t)r\alpha_r(s) f_s^{p}(t)^{r-1}}\right)\underset{p\to+\infty}{\sim}\ln \left(\frac{(f_s^{p+1})'(t)}{(f_s^{p})'(t)re^{r^{p}(r-1)b(t)}}\right), 
\end{equation*}
thereby, it gives: 
\begin{multline*}
	\tilde{b}(t):=\sum_{p\ge0}\ln \left(\frac{(f_s^{p+1})'(t)}{(f_s^{p})'(t)re^{r^{p}(r-1)b(t)}}\right)\\
	=\sum_{p\ge0}\ln \left(\frac{(f_s^{p+1})'(t)r^{-p-1}e^{-r^{p+1}b(t)}}{(f_s^{p})'(t)r^{-p}e^{-r^{p}b(t)}}\right)<\infty.
\end{multline*}
$\tilde b(t)$ being telescopic, we have: 
\begin{equation}
\ln \left(\frac{\left(f_s^{p}\right)'(t)}{r^pe^{(r^{p}-1)b(t)}}\right)=\tilde{b}(t)-\sum_{j\ge p }\ln \left(\frac{(f_s^{j+1})'(t)}{(f_s^{j})'(t)re^{r^{j}(r-1)b(t)}}\right)=\tilde{b}(t)+o(1) ,
\end{equation}
and as a result: 
\begin{equation}
\left(f_s^{p}\right)'(t)=r^pe^{(r^{p}-1)b(t)}e^{\tilde{b}(t)}(1+o(1)),
\end{equation}
which is the claimed result for $\ell=1$ with $K_{s,1}:=e^{\tilde b(t)-b(t)}$.\\
We conclude the proof by induction on $\ell$: assume that there exists $\ell\ge2$ such that \eqref{equivdur} is true for $j<\ell$ and recall \eqref{exptelf_spk}:
\begin{multline*}
	\frac{(f_s^{p+1})^{(\ell)}(t)}{(f_s^{p+1})'(t)}-	\frac{(f_s^{p})^{(\ell)}(t)}{(f_s^{p})'(t)}\\
	=\sum_{i=2}^\ell\frac{\ell!}{i!}f_s^{(i)}(f_s^{p}(t))\sum_{(n_1,\ldots,n_i)\in S_{i,\ell}}\frac{1}{(f_s^{p+1})'(t)}\prod_{j=1}^i\frac{(f_s^{p})^{(n_j)}(t)}{n_j!}.
\end{multline*}
For all $1\le i\le \ell$, according to the induction hypothesis, there exists a positive function $C_{i,s}$ such that:  
\begin{align}\label{equivdurdur}
\sum_{(n_1,\ldots,n_i)\in S_{i,\ell}}\frac{1}{(f_s^{p+1})'(t)}\prod_{j=1}^i\frac{(f_s^{p})^{(n_j)}(t)}{n_j!}\sim C_{i,s}(t)r^{p(\ell-1)}e^{r^p(i-r)b(t)}.
\end{align}
Note that when $t$ goes to 0:
\[
f_s^{(i)}(t)=
\begin{cases}
t^{r-i}\frac{r!}{(r-i)!}\alpha_r(s)+o(t^{r-i}), & \mbox{if }i<r\\
i!\alpha_i(s)+o(1),&\mbox{ otherwise.}
\end{cases}
\]
Consequently, as $f_s^p(t)$ goes to 0 when $p$ goes to infinity, using \eqref{equivdurdur}:
\begin{align*}
& \frac{(f_s^{p+1})^{(\ell)}(t)}{(f_s^{p+1})'(t)}-	\frac{(f_s^{p})^{(\ell)}(t)}{(f_s^{p})'(t)}\\
&\sim r^{p(\ell-1)}\sum_{i=2}^{r-1}\frac{\ell!}{i!}\left(f_s^p(t)\right)^{r-i}\frac{r!}{(r-i)!}\alpha_r(s)C_{i,s}(t)e^{r^p(i-r)b(t)}\\
&+ r^{p(\ell-1)}\sum_{i=r}^{\ell}\ell!\alpha_i(s)C_{i,s}(t)e^{r^p(i-r)b(t)}\\
&\sim r^{p(\ell-1)}\alpha_r(s)\ell!\sum_{i=2}^{r}\binom{r}{i}K_{s,0}(t)^{r-i}C_{i,s}(t)
=:\breve{K}_s(t)r^{p(\ell-1)},
\end{align*}
where $\breve{K}_s$ is a positive function.\\
Since $r^{\ell-1}>1$, the series diverge and the partial sums are equivalent, which gives:
\begin{align*}
	\frac{\left(f_s^{p}\right)^{(\ell)}(t)}{\left(f_s^{p}\right)'(t)}&
	\underset{p\to+\infty}{\sim} \sum_{j=1}^{p-1}r^{j(\ell-1)}\breve{K}_s(t),
	\underset{p\to+\infty}{\sim} r^{p(\ell-1)}\breve{K}_s(t),
\end{align*}
and we conclude using \eqref{equivdur} for $\ell=1$. 
\end{proof}

Now we can state the 
result concerning the limit with the penalization function $H_\ell(Z_p)s^{M_p}t^{Z_p-\ell}$ for all $\ell\in \N$ when $\bp(0)=0$.
\begin{theo}\label{thm:cond_expo_r}
\begin{enumerate}
\item 	Let $\bp$ be an offspring distribution satisfying \eqref{condp}, $\bp(0)=0$ and which admits a moment of order $\ell\in\NN$ and let $\bq$ be mark function $\bq$ satisfying \eqref{condq}. Let $r=\min\{k\in\N,\  \bp(k)>0\}$. Let $t\in(0,1]$ and $s\in(0,1)$ be fixed.  We have for all $n\in\N$, and all $\Lambda_n\in \cF_n$,
	\begin{equation}
		\frac{\E\left[\indic_{\Lambda_n}H_\ell(Z_{n+p})s^{M_{n+p}}t^{Z_{n+p}}\right]}{\E\left[H_\ell(Z_{n+p})s^{M_{n+p}}t^{Z_{n+p}}\right]}\underset{p\to +\infty}{\longrightarrow}
			\E\left[\indic_{\Lambda_n}B_{s,n,0,r}\right],
				\end{equation}
	with 
	\[
	B_{s,n,0,r}=\begin{cases}
	s^{M_n}\alpha_1(s)^{-n}\indic_{Z_n=1} & \text{if }r=1,\\
	s^{M_n}\alpha_r(s)^{\frac{-\left(r^n-1\right)}{r-1}}\indic_{Z_n=r^{n}} & \text{if }r>1.
	\end{cases}
	\]
		\item The probability measure $\Q_{s,0,r}$ defined on $(\T^*,\cF_\infty)$ by
\[\forall n\in\mathbb N, \Lambda_n\in\cF_n,\quad\Q_{s,0,r}(\Lambda_n)=\E\left[\indic_{\Lambda_n}B_{s,n,0,r}\right],\]
is the distribution of a regular $r$-ary tree whose each node is marked, independently of each others, with probability
\[
\frac{s\bq(r)}{s\bq(r)+1-\bq(r)}\cdot
\]
\end{enumerate}
\end{theo}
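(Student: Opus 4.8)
The plan is to obtain part (1) from the iteration asymptotics of $f_s$, and then derive part (2) by a cylinder-event computation, exactly as in Theorems \ref{pensMntZn1} and \ref{probunderQ_krn}. For part (1) I would start as in the proof of Theorem \ref{pensMntZnderk}: by the branching property and Faà di Bruno's formula \eqref{faadibrunoform}, as in \eqref{expespsMptZpderk},
\[
\E[H_\ell(Z_{n+p})s^{M_{n+p}}t^{Z_{n+p}-\ell}\mid\cF_n]=s^{M_n}\sum_{i=1}^\ell H_i(Z_n)\,f_s^p(t)^{Z_n-i}R_i(t),
\]
with $R_i(t)=\sum_{(n_1,\dots,n_i)\in S_{i,\ell}}\prod_{j=1}^i(f_s^p)^{(n_j)}(t)/n_j!$, while the denominator is the same quantity at $n=0$, namely $\tfrac1{\ell!}(f_s^{n+p})^{(\ell)}(t)$ (the harmless factor $t^{-\ell}$ cancels in the ratio). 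The whole point is that, since $\bp(0)=0$ forces $Z_n\ge r^n$ almost surely, the dominant contribution comes from the minimal-growth event $\{Z_n=r^n\}$, on which the tree truncated at level $n$ is the regular $r$-ary tree.

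For the asymptotics I would use Lemma \ref{expfspk0} when $r\ge 2$: substituting $f_s^p(t)\sim K_{s,0}(t)e^{r^pb(t)}$ and $(f_s^p)^{(m)}(t)\sim K_{s,m}(t)r^{pm}e^{r^pb(t)}$ shows that each summand $f_s^p(t)^{Z_n-i}R_i(t)$ carries the common weight $r^{p\ell}e^{Z_nr^pb(t)}$, whereas the denominator carries $r^{(n+p)\ell}e^{r^nr^pb(t)}$. Hence the ratio of integrands is, up to a bounded prefactor, proportional to $e^{(Z_n-r^n)r^pb(t)}$, and since $b(t)<0$ and $Z_n\ge r^n$ this tends to $\indic_{Z_n=r^n}$, producing the indicator in $B_{s,n,0,r}$. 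When $r=1$ the origin is an attracting fixed point with multiplier $f_s'(0)=\alpha_1(s)\in(0,1)$, so I would first record the Koenigs-type asymptotics $f_s^p(t)\sim\alpha_1(s)^pQ(t)$ and $(f_s^p)^{(m)}(t)\sim\alpha_1(s)^pC_m(t)$ — the analogue of Lemma \ref{equivf_sp^k} with $\kappa(s)=0$, whose proof carries over since it uses only $f_s'(\kappa(s))\in(0,1)$. The same bookkeeping makes the ratio proportional to $\alpha_1(s)^{p(Z_n-1)}\to\indic_{Z_n=1}$, and on $\{Z_n=1\}$ only the $i=1$ term survives because $H_i(1)=0$ for $i\ge 2$, collapsing the coefficient to $\alpha_1(s)^{-n}$ irrespective of $\ell$.

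The step I expect to be the crux is to pin down the surviving coefficient to be exactly $\alpha_r(s)^{-(r^n-1)/(r-1)}$, free of any residual dependence on $t$ and $\ell$. I would obtain this cleanly by expanding the denominator through $f_s^{n+p}=f_s^n\circ f_s^p$: Faà di Bruno with outer function $f_s^n$ gives $(f_s^{n+p})^{(\ell)}(t)=\sum_{i=1}^\ell\frac{\ell!}{i!}(f_s^n)^{(i)}(f_s^p(t))R_i(t)$ with the \emph{same} functions $R_i(t)$ as in the numerator, and the near-$0$ expansion $f_s^n(u)\sim\alpha_r(s)^{(r^n-1)/(r-1)}u^{r^n}$ yields $(f_s^n)^{(i)}(f_s^p(t))\sim\alpha_r(s)^{(r^n-1)/(r-1)}\,i!\,H_i(r^n)\,f_s^p(t)^{r^n-i}$, reproducing exactly the factors $H_i(r^n)f_s^p(t)^{r^n-i}$ appearing in the numerator on $\{Z_n=r^n\}$. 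The sums over $i$ therefore cancel in the ratio, leaving $s^{M_n}\alpha_r(s)^{-(r^n-1)/(r-1)}$. The interchange of limit and expectation I would justify by dominated convergence as in Theorem \ref{pensMntZnderk}: using $f_s^p(t)\le\gamma(s)<1$, $Z_n\ge r^n$, and the uniform forms of the above estimates, the integrand is bounded by a constant multiple of $\sum_{i=1}^\ell H_i(Z_n)$, which lies in $L^1$ by Lemma \ref{propmomordk}.

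Part (2) then follows as in Theorem \ref{probunderQ_krn} by checking, for every $\bt^*\in\T^*$, that $\Q_{s,0,r}(r_n^*(\tau^*)=r_n^*(\bt^*))=P(r_n^*(\tau_{s,0,r})=r_n^*(\bt^*))$ on cylinder events. The factor $\indic_{Z_n=r^n}$ forces $\Q_{s,0,r}$ to be supported on trees in which every node up to level $n-1$ has exactly $r$ children; such a tree has $(r^n-1)/(r-1)$ nodes in generations $0,\dots,n-1$ and, by \eqref{def_p}, $\p(r_n^*(\tau^*)=r_n^*(\bt^*))=\prod_{|u|\le n-1}\bp_0(r,\eta_u)$. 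Multiplying by $s^{M_n}\alpha_r(s)^{-(r^n-1)/(r-1)}$ and allotting one factor $\alpha_r(s)^{-1}$ to each such node gives
\[
\frac{s^{\eta_u}\bp_0(r,\eta_u)}{\alpha_r(s)}=\frac{(s\bq(r))^{\eta_u}(1-\bq(r))^{1-\eta_u}}{s\bq(r)+1-\bq(r)},
\]
which is precisely the law of a node marked independently with probability $s\bq(r)/(s\bq(r)+1-\bq(r))$; taking the product over all nodes yields the announced distribution, the case $r=1$ being identical with the $r$-ary tree degenerating to a single ray.
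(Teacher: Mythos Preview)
Your argument is correct and, for the identification of the limiting constant in the case $r\ge 2$, $\ell\ge 1$, it proceeds along a genuinely different route from the paper. Both proofs share the same scaffolding: the Faà di Bruno expansion \eqref{expespsMptZpderk} for the numerator, the asymptotics of Lemma~\ref{expfspk0} (and its $r=1$ analogue via $f_s'(0)=\alpha_1(s)$), and dominated convergence using $Z_n\ge r^n$ and $Z_n\in L^\ell$. The divergence is in how the surviving coefficient is pinned down. The paper carries along an unspecified constant $a_n$, forms the ratio $b_n=a_n/a_0$, and then invokes the fact that the penalization limit $s^{M_n}r^{-n\ell}b_n\indic_{Z_n=r^n}$ is automatically a mean-one martingale to solve for $b_n$. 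Your method instead rewrites the denominator $(f_s^{n+p})^{(\ell)}(t)$ via Faà di Bruno with \emph{outer} function $f_s^n$, observes that the \emph{same} inner factors $R_i(t)$ appear, and reads off $(f_s^n)^{(i)}(u)\sim \alpha_r(s)^{(r^n-1)/(r-1)}\,i!\,H_i(r^n)\,u^{r^n-i}$ from the Taylor expansion of $f_s^n$ at $0$; on $\{Z_n=r^n\}$ the sums over $i$ then cancel term-by-term, producing $\alpha_r(s)^{-(r^n-1)/(r-1)}$ directly, without any appeal to the martingale property. This makes transparent why neither $t$ nor $\ell$ survives in the limit, which the paper's normalization argument establishes only indirectly. (One small point to keep explicit in your write-up: for $i>r^n$ one has $H_i(r^n)=0$ and the corresponding $(f_s^n)^{(i)}$ term in the denominator is of strictly higher order in $f_s^p(t)$, so it is negligible and the cancellation indeed occurs at leading order.) The paper's approach is shorter once one grants the general penalization principle; yours is self-contained and explains the structure of the answer. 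Part~(2) follows the cylinder-event computation of Theorem~\ref{pensMntZn1}(2), exactly as the paper indicates and as you carry out.
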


 \begin{proof}
We only prove part (1). The proof of (2) follows the same lines as in the proof of Theorem \ref{pensMntZn1}, (2).

\medskip
We begin with the case $\ell=0$ and $r=1$.  Using \eqref{expespsMptZp}, we obtain:
\begin{equation}\label{TCD}
	0\le \frac{\E\left[s^{M_{n+p}}t^{Z_{n+p}}|\cF_n\right]}{\E\left[s^{M_{n+p}}t^{Z_{n+p}}\right]}=\frac{s^{M_n}f_s^{p}(t)^{Z_n}}{f_s^{n+p}(t)}\le \frac{s^{M_n}f_s^{p}(t)^{Z_n-1}}{(f_s^n)'(0)}\leq \frac{1}{(f_s^n)'(0)},
\end{equation}
since the mean value inequality, applied to $f_s^n$ on $[0,1]$, gives: 
\[f_s^{n+p}(t)=f_s^n(f_s^{p}(t))-f_s^n(0)\ge (f_s^n)^\prime(0)f_s^{p}(t).\] 
As $0<(f_s^n)'(0)=\E\left[s^{M_n}\indic_{\{Z_n=1\}}\right]<1$, we can use \eqref{TCD} to apply Lebesgue’s dominated convergence theorem and we just study the a.s. limit to obtain our results.

For $r=1$, according to Lemma 10 in \cite{athreya_branching_2004} and taking the notations in the proof of Lemma \ref{equivf_sp^k}, we can define:
\begin{equation*}
	Q(t)=\int_{\kappa(s)}^t Q'(x)\mathrm{d}x,~t\in[0,1],
\end{equation*}
such that $\lim_{p\to +\infty}Q_p(t)=Q(t)$ and $Q$ is a positive function.

Since $\kappa(s)=0$, and thanks to the expression \eqref{dfQp(t)} we have 
\begin{equation}\label{expequifps0}
	f^p_s(t) \underset{p\to+ \infty}{\sim}f_s'(0)^pQ(t),
\end{equation}
with 
$f_s'(0)=\E[s^{M_1}\indic_{Z_1=1}]$. 
Thereby:
\begin{align*}
\frac{\E\left[s^{M_{n+p}}t^{Z_{n+p}}|\cF_n\right]}{\E\left[s^{M_{n+p}}t^{Z_{n+p}}\right]}
		&\underset{p\to+\infty}{\sim} s^{M_n}f_s'(0)^{p(Z_n-1)-n}Q(t)^{Z_n-1}\\
		&\underset{p\rightarrow\infty}{\longrightarrow}s^{M_n}f_s'(0)^{-n}\indic_{Z_n=1}, 
\end{align*}
since $s<1$, $f_s'(0) 
<1$, giving us our result.

\medskip
Henceforth, we consider $\ell=0$ and $r>1$. According to \eqref{expespsMptZp} and Lemma \ref{expfspk0}, we get:
\begin{multline*}
	\frac{\E\left[s^{M_{n+p}}t^{Z_{n+p}}\left|\mathscr F_n\right.\right]}{\E\left[s^{M_{n+p}}t^{Z_{n+p}}\right]}=	\frac{s^{M_n}f_s^{p}(t)^{Z_n}}{f_s^{n+p}(t)}\\
	\underset{p\to+\infty}{\sim} s^{M_n}\alpha_r^{\frac{-1}{r-1}(Z_n-1)}e^{r^{n+p}b(t)(Z_nr^{-n}-1)}\underset{p\rightarrow\infty}{\longrightarrow}s^{M_n}\alpha_r^{\frac{-(r^n-1)}{r-1}}\indic_{Z_n=r^{n}}
\end{multline*} 
since $b(t)<0$.
 To conclude this case, Lemma \ref{expfspk0} ensures that for $p$ large enough:
 		\[\frac{1}{2}\le \frac{f_s^p(t)}{\alpha_r(s)^{\frac{-1}{r-1}}e^{r^pb(t)}}\le \frac{3}{2}\]
implying that for $p$ large enough, as $Z_n\ge r^n$:
\[\frac{\E\left[s^{M_{n+p}}t^{Z_{n+p}}\left|\mathscr F_n\right.\right]}{\E\left[s^{M_{n+p}}t^{Z_{n+p}}\right]}=\frac{s^{M_n}f_s^{p}(t)^{Z_n}}{f_s^{n+p}(t)}\le \frac{\left(f_s^{p}(t)\right)^{r^n}}{f_s^{n+p}(t)}\le 2\left(\frac{3}{2}\right)^{r^n}\alpha_r(s)^{\frac{-(r^n-1)}{r-1}},\]
thus we can apply dominated convergence. 

\medskip
Now, we suppose $\ell\geq1$ and $r=1$. According to Lemma \ref{equivf_sp^k}, \eqref{expespsMptZpderk} and  \eqref{expequifps0} we have:
\begin{align*}
	&\frac{\E\left[H_\ell(Z_{n+p})s^{M_{n+p}}t^{Z_{n+p}-\ell}|\cF_n\right]}{f_s^{\prime}(0)^{p}}\\
	&=\frac{s^{M_n}}{f_s^{\prime}(0)^{p}}\sum_{i=1}^\ell H_i(Z_n)f_s^{p}(t)^{Z_n-i}{\sum_{(n_1,\ldots,n_i)\in S_{i,\ell}}}\prod_{j=1}^i\frac{\left(f_s^{p}\right)^{(n_j)}(t)}{n_j!}\\
	&\sim \frac{s^{M_n}}{f_s^{\prime}(0)^{p}}\sum_{i=1}^\ell H_i(Z_n)\left(f_s'(0)^{p}Q(t)\right)^{Z_n-i}{\sum_{(n_1,\ldots,n_i)\in S_{i,\ell}}}\prod_{j=1}^i\frac{f_s'(0)^{p}C_{n_j}(t)}{n_j!}\\
	&= s^{M_n}f_s'(0)^{p(Z_n-1)}\sum_{i=1}^\ell H_i(Z_n)Q(t)^{Z_n-i}{\sum_{(n_1,\ldots,n_i)\in S_{i,\ell}}}\prod_{j=1}^i\frac{C_{n_j}(t)}{n_j!}\\
	&\underset{p\rightarrow\infty}{\longrightarrow}s^{M_n}\frac{C_\ell(t)}{\ell!}\mathds{1}_{Z_n=1}.
\end{align*}
The rest of the proof is the same as the one of Theorem \ref{pensMntZnderk} as the following upper bound remains true in our case:
\begin{align*}
0\le \frac{\E\left[H_\ell(Z_{n+p})s^{M_{n+p}}t^{Z_{n+p}-\ell}|\cF_n\right]}{f_s^{\prime}(0)^{p}}\le C\sum_{i=1}^\ell H_i(Z_n).
\end{align*}

Finally, we consider the case $\ell\geq1$ and $r>1$. According to Lemma \ref{expfspk0}, we have for every $1\leq i\leq \ell$ and every $(n_1,\ldots,n_i)\in S_{i,\ell}$, when $p\to+\infty$:
\begin{align*}
\sum_{(n_1,\ldots,n_i)\in S_{i,\ell}}\prod_{j=1}^i\frac{\left(f_s^{p}\right)^{(n_j)}(t)}{n_j!}\sim r^{p\ell} e^{r^pib(t)}\tilde{K_i}(t),
\end{align*}
with $\tilde{K_i}(t)$ a positive function, since all the terms in the sum are positive and of the same order. According to \eqref{expespsMptZpderk} and \eqref{expfspk00}, when $p$ goes to infinity:
\begin{align*}
	& \frac{\E\left[H_\ell(Z_{n+p})s^{M_{n+p}}t^{Z_{n+p}-\ell}|\cF_n\right]}{r^{p\ell}e^{r^{m+p}b(t)}}\\
	&\sim s^{M_n}\sum_{i=1}^\ell H_i(Z_n)\left(e^{r^{p}b(t)}K_0(t)\right)^{Z_n-i} e^{r^{p}(i-r^n)b(t)}\tilde{K_i}(t)\\
	&\sim s^{M_n}e^{r^{p}(Z_n-r^n)b(t)}\sum_{i=1}^\ell H_i(Z_n)K_0(t)^{Z_n-i}\\
	&\underset{p\rightarrow\infty}{\longrightarrow} \mathds{1}_{Z_n=r^n}s^{M_n}\sum_{i=1}^\ell H_i(r^n)K_0(t)^{r^n-i}=:s^{M_n}a_n\mathds{1}_{Z_n=r^n},
\end{align*}
where $a_n$ is a positive constant. Note that for $p$ large enough, as $Z_n\ge r^n$:
\begin{align*}
	& \frac{\E\left[H_\ell(Z_{n+p})s^{M_{n+p}}t^{Z_{n+p}-\ell}|\cF_n\right]}{r^{p\ell}e^{r^{n+p}b(t)}}\\
	& \le \frac{s^{M_n}}{r^{p\ell}e^{r^{n+p}b(t)}}\sum_{i=1}^\ell H_i(Z_n)f_s^{p}(t)^{r^n-i}{\sum_{(n_1,\ldots,n_i)\in S_{i,\ell}}}\prod_{j=1}^i\frac{\left(f_s^{p}\right)^{(n_j)}(t)}{n_j!} \\
	 &\le s^{M_n}\left(\frac{3}{2}\right)^{r^n}\sum_{i=1}^\ell H_i(Z_n)K_0(t)^{r^n-i} {\sum_{(n_1,\ldots,n_i)\in S_{i,\ell}}}\prod_{j=1}^i\frac{K_{n_j}}{n_j!}\in L^1
\end{align*}
as $Z_n\in L^\ell$. As a result, Lebesgue’s dominated convergence theorem gives:
\begin{align*}
	\frac{\E\left[\mathds{1}_{\Lambda_n}H_\ell(Z_{n+p})s^{M_{n+p}}t^{Z_{n+p}-\ell}\right]}{r^{p\ell}e^{r^{n+p}b(t)}}=\E\left[\mathds{1}_{\Lambda_n}s^{M_n}a_n\mathds{1}_{Z_n=r^n}\right].
	\end{align*}
	The previous formula is true for $n=0$ and $\Lambda_0=\T^*$, consequently:
\begin{align*}
\lim_{p\rightarrow+\infty}\frac{\E\left[\mathds{1}_{\Lambda_n}H_\ell(Z_{n+p})s^{M_{n+p}}t^{Z_{n+p}-\ell}\right]}{\E\left[H_\ell(Z_{n+p})s^{M_{n+p}}t^{Z_{n+p}-\ell}\right]}& =\E\left[\mathds{1}_{\Lambda_n}s^{M_n}r^{-n\ell}\frac{a_n}{a_0}\mathds{1}_{Z_n=r^n}\right]\\
& =:\E\left[\mathds{1}_{\Lambda_n}s^{M_n}r^{-n\ell}b_n\mathds{1}_{Z_n=r^n}\right].
\end{align*}
Since $s^{M_n}r^{-n\ell}b_n\indic_{Z_n=r^{n}}$ is a martingale with mean 1, $b_n=r^{n\ell}\alpha_r(s)^{\frac{-(r^n-1)}{r-1}}$.
 \end{proof}
 
 \subsection{Case $s=0$}
 
 For the sake of completeness, we consider the case $s=0$, that is we penalize by the quantity
 \[
 H_\ell(Z_{p})\indic_{M_p=0}t^{Z_p}.
 \]
As the proofs are very close to that of the previous sections,
we only state the main results and give some ideas for the proofs.

Here, we introduce $\psi(t):=\E\left[\indic_{M_1=0}t^{Z_1}\right]$, $\tilde r:=\min\{j>0;~\bp(j)(1-\bq(j))>0\}$ and $\tilde{\kappa}$ the smallest positive fixed point of $\psi$.

\medskip
 We begin with the case $\tilde r=0$, where $0<\tilde{\kappa}<1$ and we obtain:

\begin{theo}\label{pen0MntZn}
	Let $\bp$ be an offspring distribution satisfying \eqref{condp} that admits a moment of order $\ell\in\NN$ and let $\bq$ be a mark function satisfying \eqref{condq} such that $\tilde r=0$, and let $t\in(0,1]$. We have for all $n\in\N$, $\Lambda_n\in\cF_n$:
	\begin{equation}
		\frac{\E\left[\indic_{\Lambda_n}H_\ell(Z_{n+p})\indic_{M_{n+p}=0}t^{Z_{n+p}}\right]}{\E\left[H_\ell(Z_{n+p})\indic_{M_{n+p}=0}t^{Z_{n+p}}\right]}\underset{p\to +\infty}{\longrightarrow}
		\left\{
		\begin{array}{ll}
			\E\left[\indic_{\Lambda_n}B_{0,n,1}\right],& \text{ if }\ell=0\\ 
			\\
			\E\left[\indic_{\Lambda_n}B_{0,n,2}\right],& \text{ otherwise },
		\end{array}
		\right. 
	\end{equation}
	where $B_{0,n,1}:=\tilde{\kappa}^{Z_n-1}\indic_{M_n=0}$ and  $B_{0,n,2}:=\frac{Z_n\tilde{\kappa}^{Z_n-1}}{\psi'(\tilde{\kappa})^{n}}\indic_{M_n=0}$.
	\end{theo}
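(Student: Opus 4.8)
The plan is to transpose the arguments of Theorems \ref{pensMntZn1} and \ref{pensMntZnderk} to the present situation, the function $\psi$ playing the role that $f_s$ played when $s>0$ and $\tilde\kappa$ playing the role of $\kappa(s)$. The starting point is that, since $M_{n+p}=M_n+\sum_{i=1}^{Z_n}\tilde M_{i,p}$ with non-negative summands (recall \eqref{expM_p}), one has $\indic_{M_{n+p}=0}=\indic_{M_n=0}\prod_{i=1}^{Z_n}\indic_{\tilde M_{i,p}=0}$, so that the branching property yields the exact analogue of \eqref{expespsMptZp},
\[
\E\left[\indic_{M_{n+p}=0}t^{Z_{n+p}}\bigm|\cF_n\right]=\indic_{M_n=0}\,\psi^p(t)^{Z_n},
\]
where $\psi^p$ denotes the $p$-th iterate of $\psi$. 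Here $\psi(t)=\sum_{k}\bp(k)(1-\bq(k))t^k$ is increasing, convex and of class $\mathscr C^\ell$ on $[0,1]$ (because $\bp$ has a moment of order $\ell$); the hypothesis $\tilde r=0$ means $\psi(0)=\bp(0)(1-\bq(0))>0$, while \eqref{condq} gives $\psi(1)<1$. Consequently $\psi$ has a unique fixed point $\tilde\kappa\in(0,1)$ with $\psi'(\tilde\kappa)\in(0,1)$, and $\psi^p(t)\to\tilde\kappa$ for every $t\in[0,1]$, exactly as in the discussion following \eqref{limespsMptZp}.

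For the case $\ell=0$, I would write
\[
\frac{\E\left[\indic_{\Lambda_n}\indic_{M_{n+p}=0}t^{Z_{n+p}}\right]}{\E\left[\indic_{M_{n+p}=0}t^{Z_{n+p}}\right]}=\frac{\E\left[\indic_{\Lambda_n}\indic_{M_n=0}\psi^p(t)^{Z_n}\right]}{\psi^{n+p}(t)}.
\]
Since $\indic_{M_n=0}\psi^p(t)^{Z_n}\le 1$, dominated convergence handles the numerator, whose limit is $\E[\indic_{\Lambda_n}\indic_{M_n=0}\tilde\kappa^{Z_n}]$, while the denominator converges to $\tilde\kappa>0$; the ratio of the limits is $\E[\indic_{\Lambda_n}\tilde\kappa^{Z_n-1}\indic_{M_n=0}]=\E[\indic_{\Lambda_n}B_{0,n,1}]$, as claimed. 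This is the verbatim analogue of the proof of Theorem \ref{pensMntZn1}(1).

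For $\ell\ge 1$, I would first prove the exact analogue of Lemma \ref{equivf_sp^k}, namely that there is a positive function $C_\ell$ with $(\psi^p)^{(\ell)}(t)\sim \psi'(\tilde\kappa)^p C_\ell(t)$ as $p\to+\infty$; its proof is the one given for Lemma \ref{equivf_sp^k} with $f_s,\kappa(s)$ replaced by $\psi,\tilde\kappa$, the only input being $\psi'(\tilde\kappa)\in(0,1)$. Differentiating the identity above $\ell$ times in $t$ and applying Faà di Bruno's formula \eqref{faadibrunoform} gives, as in \eqref{expespsMptZpderk},
\[
\E\left[H_\ell(Z_{n+p})\indic_{M_{n+p}=0}t^{Z_{n+p}-\ell}\bigm|\cF_n\right]=\indic_{M_n=0}\sum_{i=1}^\ell H_i(Z_n)\psi^p(t)^{Z_n-i}\!\!\sum_{(n_1,\ldots,n_i)\in S_{i,\ell}}\prod_{j=1}^i\frac{(\psi^p)^{(n_j)}(t)}{n_j!}.
\]
After dividing by $\psi'(\tilde\kappa)^p$, only the $i=1$ term survives (each term with $i\ge 2$ carries an extra factor $\psi'(\tilde\kappa)^{p(i-1)}\to 0$), and the same domination as in the proof of Theorem \ref{pensMntZnderk} (a bound of the form $C(t)\sum_{i=1}^\ell H_i(Z_n)\in \mathrm L^1$, using $Z_n\in\mathrm L^\ell$) legitimises passing to the limit, giving
\[
\lim_{p\to+\infty}\frac{\E\left[\indic_{\Lambda_n}H_\ell(Z_{n+p})\indic_{M_{n+p}=0}t^{Z_{n+p}-\ell}\right]}{\psi'(\tilde\kappa)^p}=\frac{C_\ell(t)}{\ell!}\E\left[\indic_{\Lambda_n}Z_n\tilde\kappa^{Z_n-1}\indic_{M_n=0}\right].
\]
Specialising to $n=0$, $\Lambda_0=\T^*$ (where $Z_0=1$, $M_0=0$) identifies the denominator asymptotics, $\E[H_\ell(Z_{n+p})\indic_{M_{n+p}=0}t^{Z_{n+p}-\ell}]\sim \frac{C_\ell(t)}{\ell!}\psi'(\tilde\kappa)^{n+p}$, and taking the ratio makes the $t$-dependent constant $C_\ell(t)/\ell!$ cancel (as does the harmless factor $t^{-\ell}$), leaving $\E[\indic_{\Lambda_n}\frac{Z_n\tilde\kappa^{Z_n-1}}{\psi'(\tilde\kappa)^n}\indic_{M_n=0}]=\E[\indic_{\Lambda_n}B_{0,n,2}]$. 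The main obstacle is purely technical: verifying the fixed-point structure of the subprobability generating function $\psi$ (existence and uniqueness of $\tilde\kappa$, the bound $\psi'(\tilde\kappa)\in(0,1)$, and convergence of the iterates to $\tilde\kappa$), everything else being a transcription of the arguments already carried out for $s>0$.
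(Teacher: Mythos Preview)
Your proposal is correct and follows essentially the same route as the paper: the authors also state that the proof is obtained by transposing the arguments of Theorems \ref{pensMntZn1} and \ref{pensMntZnderk} with $\psi$ in place of $f_s$, together with the analogue of Lemma \ref{equivf_sp^k} (namely $(\psi^p)^{(\ell)}(t)\sim\psi'(\tilde\kappa)^pC_\ell(t)$). Your identification of the conditional expectation formula via $\indic_{M_{n+p}=0}=\indic_{M_n=0}\prod_{i=1}^{Z_n}\indic_{\tilde M_{i,p}=0}$ and your verification that $\tilde r=0$ and \eqref{condq} yield $\psi(0)>0$, $\psi(1)<1$, hence $\tilde\kappa\in(0,1)$ and $\psi'(\tilde\kappa)\in(0,1)$, make the transcription entirely precise.
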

The proof of this result is based on that of Theorem \ref{pensMntZn1} and Theorem \ref{pensMntZnderk}, and the following asymptotic behavior: 
	\begin{equation}\label{expequivpsip^k}
	\forall t\in[0,1],\, (\psi^p)^{(\ell)}(t)\underset{p\to+\infty}{\sim} \psi'(\tilde{\kappa})^pC_\ell(t).
	\end{equation} 
	where $C_\ell$ is a positive function (the proof is similar to that of Lemma \ref{equivf_sp^k}).\\
Under the probability $\mathbb Q_{0,i}$ defined on $(\T^*, \mathscr F)$ by $\Q_{0,i}(\Lambda_n)=\E\left[\indic_{\Lambda_n}B_{0,n,i}\right]$, 
for:
\begin{itemize}
\item $i=1$, $\tau^*$ is a MGW with the reproduction-marking law:
\[p_{0,1}(k,\eta)=\bp_0(k,\eta)\tilde{\kappa}^{k-1}\delta_{\eta 0};\]
\item $i=2$, $\tau^*$ is a two-typed random marked tree, there are normal and special nodes. The reproduction-marking law of the normal ones is $p_{0,1}$ and the special ones' is:
\[p_{0,2}(k,\eta)=\bp_0(k,\eta)\frac{k\tilde{\kappa}^{k-1}}{\psi'(\tilde{\kappa})}\delta_{\eta 0},\] 
and at each generation, there is a unique special node chosen uniformly among all the individuals of this generation.
\end{itemize}

\medskip
Henceforth, we consider the case $\tilde r\geq1$. We begin to present a lemma similar to Lemma \ref{expfspk0} for the case $\tilde r>1$.
\begin{lem}\label{exppsipk0}
	Suppose $ \tilde r\geq2$. For every $\ell\in\N$, and every $t\in(0,1]$, there exists a positive constant $K_\ell(t)$, such that
	\begin{equation*}
		(\psi^p)^{(\ell)}(t)=K_\ell(t)r^{p\ell}e^{\tilde r^pb(t)}(1+o(1)),
	\end{equation*}
	where
	\begin{equation*}
		b(t):= \ln(t)+\sum_{j=0}^{+\infty}\tilde r^{-(j+1)}\ln\left(\frac{\psi^{j+1}(t)}{\psi^j(t)^{\tilde r}}\right).
	\end{equation*}
\end{lem}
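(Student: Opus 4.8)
The plan is to transcribe the proof of Lemma~\ref{expfspk0} almost verbatim, replacing the data $(f_s,\,r,\,\alpha_r(s),\,\gamma(s))$ by $(\psi,\,\tilde r,\,\tilde\alpha,\,\psi(1))$, where $\tilde\alpha:=\bp(\tilde r)(1-\bq(\tilde r))$ is the coefficient of $t^{\tilde r}$ in $\psi(t)=\sum_{k\ge 0}\bp(k)(1-\bq(k))\,t^k$. Indeed $\psi$ is formally $f_0$, so it has exactly the structural features exploited there: it is a sub-probability generating series whose lowest-order term has degree $\tilde r\ge 2$ with strictly positive coefficient $\tilde\alpha$, it is of class $\mathscr C^\ell$ on $(0,1]$ by the moment assumption of order $\ell$, and $\psi(t)\le \psi(1)=\p(M_1=0)$ for $t\in(0,1]$. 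The first thing to check is $\psi(1)\in(0,1)$: the bound $\psi(1)<1$ follows from \eqref{condq} (the root carries a mark with positive probability) and $\psi(1)>0$ from the very existence of $\tilde r$.

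With $0<\psi(1)<1$ in hand, I would first prove, by the same induction on $p$ as in Lemma~\ref{expfspk0}, the super-exponential decay $\psi^p(t)\le \psi(1)^{\tilde r^{p-1}}$ for every $t\in(0,1]$; this estimate is the engine of the whole argument. From it the bound $0\le \psi^{p+1}(t)/\bigl(\tilde\alpha\,\psi^p(t)^{\tilde r}\bigr)-1\le C\,\psi(1)^{\tilde r^{p-1}}$ follows as before, and, via $\ln(1+u)\le u$, it makes the defining series of $b$ absolutely convergent, so that $b\colon(0,1]\to\mathbb R$ is well defined. Telescoping the identity for $\ln \psi^p(t)$ then gives the case $\ell=0$, namely $\psi^p(t)=\tilde\alpha^{-1/(\tilde r-1)}e^{\tilde r^p b(t)}(1+o(1))$, i.e. $K_0(t)=\tilde\alpha^{-1/(\tilde r-1)}$.

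For $\ell=1$ I would use the recursion $(\psi^{p+1})'(t)=(\psi^p)'(t)\,\psi'(\psi^p(t))$, the local behaviour $\psi'(u)\sim \tilde r\,\tilde\alpha\,u^{\tilde r-1}$ as $u\to 0$, and the case $\ell=0$, telescoping $\ln\bigl((\psi^p)'(t)/(\tilde r^p e^{(\tilde r^p-1)b(t)})\bigr)$ exactly as in the corresponding step of Lemma~\ref{expfspk0}, which yields $\ell=1$ with $K_1(t)=e^{\tilde b(t)-b(t)}$. For $\ell\ge 2$ the induction rests on Faà di Bruno's identity \eqref{exptelf_spk}, valid verbatim with $\psi$ in place of $f_s$: inserting the induction hypothesis for the derivatives of orders $<\ell$ together with the expansions $\psi^{(i)}(t)\sim \frac{\tilde r!}{(\tilde r-i)!}\tilde\alpha\,t^{\tilde r-i}$ for $i<\tilde r$ and $\psi^{(i)}(t)\to i!\,\alpha_i(0)$ for $i\ge \tilde r$, one identifies the leading contribution and sums the divergent series $\sum_j \tilde r^{j(\ell-1)}$ to obtain $(\psi^p)^{(\ell)}(t)/(\psi^p)'(t)\sim \tilde r^{p(\ell-1)}\breve K(t)$, whence the stated asymptotic through the $\ell=1$ estimate.

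The only genuine obstacle is the bookkeeping in this last step: one must verify, as in Lemma~\ref{expfspk0}, that the leading term of the telescopic difference $(\psi^{p+1})^{(\ell)}/(\psi^{p+1})'-(\psi^p)^{(\ell)}/(\psi^p)'$ comes precisely from the indices $2\le i\le \tilde r$ and correctly couples the boundary behaviour of $\psi^{(i)}$ near $0$ with the induction hypothesis. Everything else is a direct, if tedious, transcription, so I would not reproduce it in full but refer to the matching estimates in the proof of Lemma~\ref{expfspk0}.
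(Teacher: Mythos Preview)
Your proposal is correct and matches the paper's approach: the paper does not give an independent proof of Lemma~\ref{exppsipk0} but simply presents it as the direct analogue of Lemma~\ref{expfspk0} for $\psi$ in place of $f_s$, which is precisely the transcription you outline. Your identification of the substitutions $(f_s,r,\alpha_r(s),\gamma(s))\mapsto(\psi,\tilde r,\tilde\alpha,\psi(1))$ and the verification $\psi(1)\in(0,1)$ are exactly what is needed to make that transcription go through.
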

Thereby we obtain the following theorem:
\begin{theo}\label{pen0MntZnp0}
Let  $\bp$ be an offspring distribution satisfying \eqref{condp} that admits a moment of order $\ell\in\NN$,  and let $\bq$ be a mark function satisfying \eqref{condq} such that $\tilde r\ge 1$, and let $t\in(0,1]$. We have for all $n\in\N$, $\Lambda_n\in\cF_n$:
	\begin{multline*}
		\frac{\E\left[\indic_{\Lambda_n}H_\ell(Z_{n+p})\indic_{M_{n+p}=0}t^{Z_{n+p}}\right]}{\E\left[H_\ell(Z_{n+p})
		\indic_{M_{n+p}=0}t^{Z_{n+p}}\right]}\\
		\underset{p\to +\infty}{\longrightarrow}
		\begin{cases}
			\E\left[\indic_{\Lambda_n}\bp_0(1,0)^{-n}\indic_{M_n=0,Z_n=1}\right],& \text{if }\tilde r=1,\\ 
			\E\left[\indic_{\Lambda_n}\bp_0(r,0)^{\frac{-\left(\tilde r^n-1\right)}{\tilde r-1}}\indic_{M_n=0,Z_n=\tilde r^{n}}\right],& \text{if }\tilde r>1.
		\end{cases}
	\end{multline*}
\end{theo}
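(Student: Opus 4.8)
The plan is to repeat the proof of Theorem~\ref{thm:cond_expo_r} almost verbatim, replacing the generating function $f_s$ by $\psi$, the integer $r$ by $\tilde r$, and the constant $\alpha_r(s)$ by $\bp_0(\tilde r,0)$. Taking $s=0$ turns $s^{M_p}$ into $\indic_{M_p=0}$ and gives $\varphi_p(0,t)=\psi^p(t)$, so that $\psi$ plays exactly the role of $f_s$ in the regime where the relevant fixed point is $0$; indeed here $\psi(0)=\bp_0(0,0)=0$ precisely because $\tilde r\ge 1$. The branching property then produces the analogues of \eqref{expespsMptZp} and \eqref{expespsMptZpderk}:
\begin{equation*}
\E\left[H_\ell(Z_{n+p})\indic_{M_{n+p}=0}t^{Z_{n+p}-\ell}\bigm|\cF_n\right]=\indic_{M_n=0}\sum_{i=1}^\ell H_i(Z_n)\,\psi^p(t)^{Z_n-i}R_i(t),
\end{equation*}
where $R_i$ is defined as in \eqref{expespsMptZpderk} with $\psi$ in place of $f_s$, and it suffices to analyse this conditional expectation, divide by its value at $n=0$, $\Lambda_0=\T^*$, and let $p\to+\infty$.

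First I would treat $\tilde r=1$, where $\psi(0)=0$ and $\psi'(0)=\bp_0(1,0)\in(0,1)$. This is the exact counterpart of the case $r=1$ of Theorem~\ref{thm:cond_expo_r}: one has $\psi^p(t)\sim\psi'(0)^pQ(t)$ and, by the argument of Lemma~\ref{equivf_sp^k}, $(\psi^p)^{(\ell)}(t)\sim\psi'(0)^pC_\ell(t)$ for positive functions $Q,C_\ell$. Dividing the conditional expectation by $\psi'(0)^p$ and letting $p\to+\infty$ isolates the term $Z_n=1$ and yields the almost sure limit $\indic_{M_n=0}\psi'(0)^{-n}\indic_{Z_n=1}$; the upper bound $C\sum_{i=1}^\ell H_i(Z_n)$, integrable thanks to $Z_n\in L^\ell$ (Lemma~\ref{propmomordk}), justifies dominated convergence, and the normalisation gives the announced factor $\bp_0(1,0)^{-n}$.

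For $\tilde r>1$ I would invoke Lemma~\ref{exppsipk0}, whose case $\ell=0$ gives $\psi^p(t)\sim K_0(t)e^{\tilde r^p b(t)}$ with $K_0(t)=\bp_0(\tilde r,0)^{-1/(\tilde r-1)}$ and $b(t)<0$. Consequently $\psi^p(t)^{Z_n}/\psi^{n+p}(t)$ is equivalent to $K_0(t)^{Z_n-1}e^{\tilde r^p b(t)(Z_n-\tilde r^n)}$, and the general-$\ell$ expression behaves analogously. The structural point is that on $\{M_n=0\}$ every node up to generation $n-1$ is unmarked and therefore has at least $\tilde r$ children, so that $Z_n\ge\tilde r^n$; since $b(t)<0$ this both provides the uniform domination (using $Z_n\in L^\ell$) and forces the limit to concentrate on $\{Z_n=\tilde r^n\}$. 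The constant is then pinned down exactly as at the end of Theorem~\ref{thm:cond_expo_r}: the limit, being a mean-one $\cF_n$-martingale supported on $\{M_n=0,\,Z_n=\tilde r^n\}$, must equal $\indic_{M_n=0}\bp_0(\tilde r,0)^{-(\tilde r^n-1)/(\tilde r-1)}\indic_{Z_n=\tilde r^n}$.

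The main obstacle I anticipate is not analytic but organisational: one must check, at every domination and every identification of a limit, that the constraint $\{M_n=0\}$ plays exactly the role that $\bp(0)=0$ played in Theorem~\ref{thm:cond_expo_r}, namely that it enforces the minimal-growth bound $Z_n\ge\tilde r^n$. Once Lemma~\ref{exppsipk0} is established (a line-by-line repetition of Lemma~\ref{expfspk0} with $\psi$ replacing $f_s$), the remaining steps---computing the almost sure limits, dominating by $Z_n\in L^\ell$, and fixing the constant through the martingale normalisation---are routine.
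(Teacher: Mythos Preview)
Your proposal is correct and follows exactly the approach the paper intends: the paper gives no detailed proof of this theorem, merely pointing to Lemma~\ref{exppsipk0} and the analogy with Theorem~\ref{thm:cond_expo_r}, and your outline fills in precisely those steps. In particular, your observation that the event $\{M_n=0\}$ forces every node up to generation $n-1$ to have out-degree at least $\tilde r$ (hence $Z_n\ge\tilde r^n$) is the one genuinely new ingredient replacing the hypothesis $\bp(0)=0$ of Theorem~\ref{thm:cond_expo_r}, and it is exactly what is needed for both the domination and the identification of the limit.
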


The probabilities obtained with these martingales are Dirac masses at $\tilde r$-ary regular trees with no marks.

%	\section{Introduction}
%	\input{Pages/Introduction}
%	\section{Technical background}\label{techback}
%	\input{Pages/Technical background}
%	\section{Polynomial penalizations}\label{martingale}
%	\subsection{Penalization by $M_p$}\label{martMn}
%	\input{Pages/Martingale with Mn}
%	\subsection{Penalization by $M_p^k$}\label{martMn^k}
%	\input{Pages/Martingale with Mn^k}
%	\subsection{Penalization by $s^{M_p}t^{Z_p}$}\label{pensMntZn}
%	\input{Pages/Penalization with sM_ntZ_n}
	\bibliographystyle{abbrv}
	\bibliography{biblio21}
	
\end{document}